\theoremstyle{plain} 
\newtheorem{theorem}{\noindent\bf Theorem}[section] 
\newtheorem{lemma}[theorem]{\noindent\bf Lemma}
\newtheorem{corollary}[theorem]{\noindent\bf Corollary}
\newtheorem{proposition}[theorem]{\noindent\bf Proposition}
\theoremstyle{definition}
\newtheorem{definition}[theorem]{\noindent\bf Definition}
\newtheorem{example}[theorem]{\indent\sc Example}
\newcommand{\Vol}[0]{\operatorname{Vol}}
\newcommand{\deldel}{\sqrt{-1}\partial \overline{\partial}}
\newcommand{\dbar}{\overline{\partial}}
\newcommand{\e}{\varepsilon}
\newcommand{\J}{\mathbb{J}}
\newcommand{\vol}{\mathrm{Vol}}
\begin{document}

\title[Modified extremal metrics and
multiplier Hermitian-Einstein metrics]
{Modified extremal K\"{a}hler metrics and
multiplier Hermitian-Einstein metrics} 
\author[Y. Nakagawa and S. Nakamura]
{Yasuhiro Nakagawa and Satoshi Nakamura} 

\subjclass[2010]{ 
Primary 53C25; Secondary 53C55, 58E11.
}
\keywords{ 
Extremal K\"{a}hler metrics, Multiplier Hermitian-Einstein metrics.
}
\thanks{ 
The first author is supported by JSPS Grant-in-Aid for Scientific
Research (C) No.~23K03094.
The second author is supported by JSPS Grant-in-Aid for Science
Research Activity Start-up No.~21K20342
and Grant-in-Aid for Early-Career Scientists No.~24K16917.
}

\address{
Y. Nakagawa:
Faculty of Advanced Science and Technology, Kumamoto University,
Kurokami 2-40-1, Chuo-ku, Kumamoto 860-8555,
Japan
}
\email{yasunaka@educ.kumamoto-u.ac.jp}

\address{
S. Nakamura:
(Former address) Department of Mathematics, Tokyo Institute of
Technology,
2-12-1, Ookayama, Meguro-ku, Tokyo, 152-8551, Japan}
\address{
S. Nakamura:
(Current address) Department of Mathematics, Institute of Science Tokyo,
2-12-1, Ookayama, Meguro-ku, Tokyo, 152-8551, Japan
}
\email{s.nakamura@math.titech.ac.jp}

\begin{abstract}
Motivated by the notion of multiplier Hermitian-Einstein metric of type
$\sigma$
introduced by Mabuchi,
we introduce the notion of $\sigma$-extremal K\"{a}hler metrics on
compact
K\"{a}hler manifolds,
which generalizes Calabi's extremal K\"{a}hler metrics.
We characterize the existence of this metric in terms of the coercivity
of a certain functional on the space of K\"{a}hler metrics
to show that, on a Fano manifold,
the existence of a multiplier
Hermitian-Einstein metric of type $\sigma$ implies 
 the existence of a $\sigma$-extremal
K\"{a}hler metric.
\end{abstract}

\maketitle

\setcounter{tocdepth}{1}
\tableofcontents

\section{Introduction}
Various generalizations of K\"{a}hler-Einstein metrics are discussed in
the theory for canonical K\"{a}hler metrics.
Calabi's extremal K\"{a}hler metric \cite{Ca82} 
is one of the most important generalizations for a compact K\"{a}hler
manifold.
For a Fano manifold, a Mabuchi soliton \cite{Ma01} 
and a K\"{a}hler-Ricci soliton \cite{Koi90} are also extensively studied 
and can be seen as typical examples of multiplier Hermitian-Einstein
metrics introduced by Mabuchi \cite{Ma03}.

It is known that for a Fano manifold, the existence of a Mabuchi soliton
implies the existence of
Calabi's extremal K\"{a}hler metric in the first Chern class 
as pointed out in \cite[Remark 2.22]{Hi19-2}, \cite[Section 9.6]{Mabook}
and \cite[Remark 5.8]{Ya22}
for example.
In this paper, we focus on this interesting relation for the existence
of different canonical K\"{a}hler metrics from view points of multiplier
Hermitian-Einstein metrics.

Let $M$ be an $n$ dimensional compact connected K\"{a}hler manifold
without boundary and $\Omega$ its K\"{a}hler class.
we denote, by $\mathfrak{X}(M)$, the Lie algebra of holomorphic vector
fields on $M$ and put
$\mathfrak{X}_{0}(M):=\left\{X\in\mathfrak{X}(M)\,|\,
\mathrm{Zero}(X)\ne\varnothing\right\}$,
where $\mathrm{Zero}(X)$ is the zero-set of $X\in\mathfrak{X}(M)$.
Note that if $M$ is a Fano manifold
then $\mathfrak{X}(M)=\mathfrak{X}_{0}(M)$.
For $X\in\mathfrak{X}(M)$ and $\omega\in\Omega$,
there exists a unique complex valued smooth function
$\theta_{X}^{(\omega)}\in C^{\infty}(M;\mathbb{C})$ on $M$ such that
\begin{equation}\label{theta-normalization}
i_{X}\omega=\mathbb{H}_{\omega}\left(i_{X}\omega\right)+
\sqrt{-1}\,\overline{\partial}\theta_{X}^{(\omega)}
\quad\text{and}\quad
\int_{M}\theta_{X}^{(\omega)}\omega^{n}=0,
\end{equation}
where $i_{X}\omega$ is the interior product of $\omega$ with $X$
and $\mathbb{H}_{\omega}(i_{X}\omega)$ the harmonic part of
$i_{X}\omega$ with respect to $\omega$
(cf. \cite[pp.~90--95]{Kobayashi72a}).
Then we have
$$
L_{X}\omega=di_{X}\omega=
\sqrt{-1}\partial\overline{\partial}\theta_{X}^{(\omega)},
$$
where $L_{X}\omega$ is the Lie differentiation of $\omega$ with respect
to $X$. $\theta_{X}^{(\omega)}$ is called the {\it complex potential} of
$X$ with respect to $\omega$.
If $X\in\mathfrak{X}_{0}(M)$, then $\mathbb{H}_{\omega}(i_{X}\omega)=0$,
hence we have
$i_{X}\omega=\sqrt{-1}\,\overline{\partial}\theta_{X}^{(\omega)}$.

Fix $V\in\mathfrak{X}_{0}(M)$ and a $V_{\mathrm{Im}}$-invariant
K\"{a}hler metric $\omega_{0}\in\Omega$,
where $V_{\mathrm{Im}}:=\frac{1}{2\sqrt{-1}}(V-\overline{V})$ is the
imaginary part of $V$. Let 
$$
\mathcal{H}:=\set{ \phi\in{C^{\infty}(M;\mathbb{R})}\,|\,
\omega_{\phi}:=\omega_{0}+\deldel\phi>0}
$$
be the set of all {\it K\"{a}hler potentials}, and put
$\mathcal{H}_{V}=\set{\phi\in\mathcal{H}\,|\,
L_{V_{\mathrm{Im}}}\omega_{\phi}=0}$.
For each $\phi\in\mathcal{H}_{V}$,
$\theta_{V}^{(\omega_{\phi})}\in{C^{\infty}(M;\mathbb{R})}$ is a real
valued function on $M$.
In this article, if there is no fear of confusion,
$\theta_{V}^{(\omega_{\phi})}$ and $\theta_{V}^{(\phi)}$ are used
interchangeably.
According to \cite{FM}, two real numbers
$\min_{M}\theta_{V}^{(\phi)}$ and $\max_{M}\theta_{V}^{(\phi)}$ are
independent of the choice of $\phi\in\mathcal{H}_{V}$.
Set $I=(\alpha,\beta)$
where $$\alpha<\min_{M}\theta_{V}^{(\phi)}
\leqq\max_{M}\theta_{V}^{(\phi)}<\beta.$$
Let $\sigma(s)$ be a real-valued smooth function on $I$.
Since $\int_{M}\exp(-\sigma(\theta_{V}^{(\phi)}))\omega_{\phi}^{n}$ is
independent of the choice of $\phi\in\mathcal{H}_{V}$,
we normalize $\sigma$ by the condition
$$
\int_{M}\exp(-\sigma(\theta_{V}^{(\phi)}))
\omega_{\phi}^{n}=\int_{M}\omega_{\phi}^{n}.
$$

\begin{definition}
A K\"{a}hler metric $\omega_{\phi}$ ($\phi\in\mathcal{H}_{V}$) is
an $\sigma$-{\it extremal K\"{a}hler metric} if
$$S(\omega_{\phi})-\underline{S}=1-e^{-\sigma(\theta_{V}^{(\phi)})}$$
where $S(\omega_{\phi})$ is the scalar curvature of $\omega_{\phi}$ and
$\underline{S}$ its average
$\int_{M}S(\omega_{\phi})\omega_{\phi}^{n}/\int_{M}\omega_{\phi}^{n}$.
We note that the constant $\underline{S}$ is independent of the choice
of $\phi\in\mathcal{H}_{V}$.
\end{definition}

When $1-e^{-\sigma(\theta_{V}^{(\phi)})}=\theta_{V}^{(\phi)}$,
a $\sigma$-extremal K\"{a}hler metric is nothing but an extremal
K\"{a}hler
metric introduced by Calabi \cite{Ca82}.
The first result in this article is to characterize the existence for a
$\sigma$-extremal K\"{a}hler metric
in terms of the properness of a certain energy functional on the space
of K\"{a}hler metrics
as a generalization of the works by Chen-Cheng \cite{CC1, CC2} and He
\cite{He19}.
Let $\mathrm{Aut}_{0}(M)$ be the identity component of the holomorphic
automorphism group for $M$,
and $G=\mathrm{Aut}_{0}(M,V)$ be its subgroup which commutes with the
action of $V$.

\begin{theorem}\label{main thm}
There exists a $\sigma$-extremal K\"{a}hler metric in
$\mathcal{H}_{V}$ if and only if
the $\sigma$-Mabuchi functional is 
coercive (see Definition \ref{coercivity} for the coercivity).
\end{theorem}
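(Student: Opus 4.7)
The plan is to adapt the Chen--Cheng \cite{CC1,CC2} and He \cite{He19} variational characterization of (modified) extremal K\"ahler metrics to the $\sigma$-extremal setting. The first step will be to identify the $\sigma$-Mabuchi functional $\mathcal{M}_{\sigma}$ on $\mathcal{H}_{V}$: by the normalization
$\int_{M}e^{-\sigma(\theta_{V}^{(\phi)})}\omega_{\phi}^{n}=\int_{M}\omega_{\phi}^{n}$,
one can check that the Euler--Lagrange equation of a functional of the form
$\mathcal{M}_{\sigma}(\phi)=\mathcal{M}(\phi)-\mathcal{E}_{\sigma,V}(\phi)$,
where $\mathcal{M}$ is the classical Mabuchi K-energy and $\mathcal{E}_{\sigma,V}$ is a correction term built from $\theta_{V}^{(\phi)}$ and $\sigma$ (through a Legendre-type primitive of $1-e^{-\sigma(s)}$), is precisely the $\sigma$-extremal equation $S(\omega_{\phi})-\underline{S}=1-e^{-\sigma(\theta_{V}^{(\phi)})}$. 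Verifying this reduces to a variational computation using the standard first-variation formulas for $S(\omega_{\phi})$ and for $\theta_{V}^{(\phi)}$, together with the Mabuchi-type identity
$\tfrac{d}{dt}\int_{M}f(\theta_{V}^{(\phi_{t})})\omega_{\phi_{t}}^{n}=\int_{M}V(\dot\phi_{t})f'(\theta_{V}^{(\phi_{t})})\omega_{\phi_{t}}^{n}$.

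For the ``only if'' direction I would follow the Berman--Berndtsson/Darvas--Rubinstein scheme: show that $\mathcal{M}_{\sigma}$ is convex along (weak) $C^{1,1}$ geodesics in $\mathcal{H}_{V}$, using that $\mathcal{M}$ is convex (by Berman--Berndtsson and its generalizations) and that $\mathcal{E}_{\sigma,V}$ is affine, or at worst convex, along geodesics preserving $V_{\mathrm{Im}}$-invariance. Given a $\sigma$-extremal metric $\omega_{\psi}$ as a minimizer, reductivity of $G=\mathrm{Aut}_{0}(M,V)$ (in the sense relevant to the $\sigma$-setting) and uniqueness of the minimizer modulo $G$ then upgrade the bound $\mathcal{M}_{\sigma}\geq \mathcal{M}_{\sigma}(\psi)$ to the coercivity statement $\mathcal{M}_{\sigma}(\phi)\geq \delta\, d_{1,G}(\phi,\psi)-C$ for some $\delta,C>0$, where $d_{1,G}$ is the Darvas $d_{1}$-distance modulo the $G$-action.

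For the ``if'' direction, which I expect to be the main difficulty, I would run a continuity method $S(\omega_{\phi_{t}})-\underline{S}=t\bigl(1-e^{-\sigma(\theta_{V}^{(\phi_{t})})}\bigr)+(1-t)\,h_{0}$ starting from a metric whose prescribed scalar curvature data $h_{0}$ is solvable, and show that the set of parameters $t\in[0,1]$ where a solution exists is both open and closed. Openness follows from the implicit function theorem once one knows the linearized operator (a fourth-order operator of Lichnerowicz type twisted by $V$ and $\sigma$) is Fredholm on the $G$-invariant slice. Closedness requires $C^{\infty}$ a priori estimates; coercivity of $\mathcal{M}_{\sigma}$ gives an entropy bound, and from there I would import the Chen--Cheng strategy as refined by He: deduce an $L^{\infty}$ bound for $\phi_{t}$, then an $L^{\infty}$ bound for the Monge--Amp\`ere density, then $C^{1,1}$ and higher regularity by Evans--Krylov and bootstrapping.

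The principal obstacle is that the right-hand side of the $\sigma$-extremal equation depends nonlocally on $\phi$ through $\theta_{V}^{(\phi)}$, which itself depends on the metric $\omega_{\phi}$. Consequently the Chen--Cheng a priori estimates, and in particular the auxiliary PDE arguments controlling the scalar curvature by the entropy, must be run in the $V_{\mathrm{Im}}$-invariant category and must absorb additional terms of the form $\sigma'(\theta_{V}^{(\phi)})\,V(\dot\phi)$ and their derivatives. I expect that uniform bounds on $\theta_{V}^{(\phi)}$ (guaranteed by the independence of $\min_{M}\theta_{V}^{(\phi)}$ and $\max_{M}\theta_{V}^{(\phi)}$ noted after \cite{FM}) and the smoothness of $\sigma$ on the closed interval $[\min\theta_{V},\max\theta_{V}]\subset I$ will make these extra terms harmless, so that the He--Chen--Cheng machinery carries through with only cosmetic modifications, yielding the desired equivalence.
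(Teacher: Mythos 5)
Your overall strategy --- identifying the $\sigma$-Mabuchi functional $K_{V}^{h}$ with $h(s)=1-e^{-\sigma(s)}$ as the right energy, proving its convexity along geodesics, and combining the Darvas--Rubinstein existence/properness principle with the Chen--Cheng/He a priori estimates --- is the paper's route. Your ``only if'' direction in particular matches theirs: they verify properties (P1)--(P6) of \cite[Theorem~4.7]{Dar19}, with compactness of minimizing sequences (Proposition~\ref{P2}), regularity of weak minimizers (Theorem~\ref{regularity}), and a Bando--Mabuchi type uniqueness theorem (Theorem~\ref{BanMab}) supplying the nontrivial inputs; your ``reductivity plus uniqueness upgrades boundedness to coercivity'' is exactly what that abstract principle packages, though it silently requires the compactness property (P2) that you do not mention.

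The genuine gap is the continuity path in your ``if'' direction. You propose interpolating the prescribed curvature function, $S(\omega_{\phi_{t}})-\underline{S}=t\bigl(1-e^{-\sigma(\theta_{V}^{(\phi_{t})})}\bigr)+(1-t)h_{0}$, and this path has neither of the two features that make the method work. First, it carries no natural energy functional that is coercive for $t<1$; the paper instead uses the twisted path \eqref{eq-t}, $t\bigl(S(\phi)-\underline{S}-h(\theta_{V}^{(\phi)})\bigr)-(1-t)\bigl(\Lambda_{\omega_{\phi}}\omega_{0}-n\bigr)=0$, whose solutions minimize $\tilde{K}_{t}=tK_{V}^{h}+(1-t)\J$; since $\J(\phi)\geqq C^{-1}d_{1}(0,\phi)-C$, this yields the uniform $d_{1}$-bounds and hence the entropy bounds needed to invoke the Chen--Cheng estimates and pass to the limit $t\to1$. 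Your path gives no such bound, so closedness is unsubstantiated. Second, openness along your path would require inverting a Lichnerowicz-type operator whose kernel contains all holomorphy potentials, and working on a ``$G$-invariant slice'' does not dispose of this; the paper's twisted path avoids the issue because the additional twist operator $F_{\omega,\alpha}$ is elliptic with kernel reduced to the constants, and openness is obtained from a generalization of Hashimoto's large-twist existence theorem (Theorem~\ref{Hashimoto} and Corollary~\ref{openness}), not from a bare implicit function theorem. The twist by a K\"ahler form is the technical heart of the argument, and it is the one idea missing from your proposal.
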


We shall prove a more general statement in Theorem~\ref{main thm'} for
an {\it $h$-modified extremal K\"{a}hler metric} in the sense that
$S(\omega_{\phi})-\underline{S}=h(\theta_{V}^{(\phi)})$ for a real
valued smooth function $h$ on the interval $I$ satisfying
$\int_{M}h(\theta_{V}^{(\phi)})\omega_{\phi}^{n}=0$.
Here, we note that the integral
$\int_{M}h(\theta_{V}^{(\phi)})\omega_{\phi}^{n}$ is
independent of the choice of $\phi\in\mathcal{H}_{V}$ \cite[Lemma 2]{Lah19}.

An $h$-modified extremal K\"{a}hler metric is a special case of
a weighted constant scalar curvature K\"{a}hler metric
\cite{AJL23, Lah19, Lah23}.
Indeed, an $h$-modified extremal K\"{a}hler metric 
should be equal to a $(1,h)$-weighted cscK metric.
In our setting, we can extend an argument of Chen-Cheng \cite{CC2} and
He \cite{He19}
on the method of continuity to show the regularity of minimizers for
a Mabuchi type energy functional corresponds to $h$-modified extremal
K\"{a}hler metrics
in Section~\ref{Reg of mini}.
As the result, we can establish not only the coercivity theorem
(Theorem~\ref{main thm'})
but also the geodesic stability
theorems (Theorems~\ref{uniform stable thm} and \ref{geod stability}).

The definition of the $\sigma$-extremal K\"{a}hler metric is motivated by
both the notion of the multiplier Hermitian-Einstein metrics of type
$\sigma$
introduced by Mabuchi \cite{Ma03} and the inequality \eqref{KgeqD}.
Consider the case when $M$ is a Fano manifold and $\Omega=2\pi
c_{1}(M)$.
Assume one of the following conditions:
(i) $\dot{\sigma}\leqq{0}\leqq\ddot{\sigma}$, 
(ii) $\ddot{\sigma} > 0$,
where $\dot{\sigma}$ and $\ddot{\sigma}$ are the first derivative and
the second derivative respectively.
Fix $\omega_{\phi}\in\Omega$.
Mabuchi called a conformally K\"{a}hler metric
$$
\widetilde{\omega}_{\phi}:=
\omega_{\phi}\exp\left(-\frac{1}{n}\sigma(\theta_{V}^{(\phi)})\right)
$$ 
a {\it multiplier Hermitian metric of type $\sigma$}.
Note that the multiplier Hermitian metric $\widetilde{\omega}_{\phi}$
can be seen as an Hermitian metric on the holomorphic tangent bundle
$TM$. Then $\widetilde{\omega}_{\phi}$ defines the Hermitian connection
$$
\widetilde{\nabla}:=\nabla -\frac{\partial
(\sigma(\theta_{V}^{(\phi)}))}{n} \mathrm{id}_{TM},
$$
where $\nabla$ is the natural connection with respect to $\omega$.
The Ricci form $\mathrm{Ric}^{\sigma}_{V}(\omega_{\phi})$ of
$(\widetilde{\omega}_{\phi}, \widetilde{\nabla})$ is equal to
$\mathrm{Ric}(\omega_{\phi})+\deldel\sigma(\theta_{V}^{(\phi)})$,
where $\mathrm{Ric}(\omega_{\phi})\in{2\pi}c_{1}(M)$ is the Ricci form
for $\omega_{\phi}$ defined by $-\deldel\log\omega_{\phi}^{n}$.

\begin{definition}
The conformally K\"{a}hler metric $\widetilde{\omega}_{\phi}$ is
a {\it multiplier Hermitian-Einstein metric of type $\sigma$}
if $\mathrm{Ric}^{\sigma}_{V}(\omega_{\phi})=\omega_{\phi}$.
Then we call $\omega_{\phi}$ itself a {\it $\sigma$-soliton}.
\end{definition}

Define the Ricci potential $\rho_{\phi}$ for $\omega_{\phi}$ as
follows.
\begin{equation}\label{Ricci}
 \mathrm{Ric}(\omega_{\phi})-\omega_{\phi}=\deldel\rho_{\phi}
 \quad\text{and}\quad
 \int_{X}(1-e^{\rho_{\phi}})\omega_{\phi}^{n}=0.
\end{equation}
In this terminology, $\omega_{\phi}$ is a $\sigma$-soliton
if and only if
$\rho_{\phi}+\sigma(\theta_{V}^{(\phi)})=0$.

A $\sigma$-soliton gives
some well-known generalizations of K\"{a}hler-Einstein metrics. 
\begin{enumerate}
\item[(i)] When $V=0$,
a $\sigma$-soliton $\omega_{\phi}$ is nothing but
a K\"{a}hler-Einstein metric.
\item[(ii)] When $\sigma(s) =-s+C$, where $C$ is a constant,
$\sigma$-soliton $\omega_{\phi}$ is a {\it K\"{a}hler-Ricci soliton}
in the sense that
$\mathrm{Ric}(\omega_{\phi})-\omega_{\phi}=L_{V}\omega_{\phi}$.
\item[(iii)] When $\sigma(s)=-\log(s+1)$ and $\theta_{V}^{(\phi)}>-1$,
$\sigma$-soliton $\omega_{\phi}$ is
a {\it Mabuchi soliton} (\cite{Hi19-2, Ma01, Ya21})
in the sense that $1-e^{\rho_{\phi}}$ is the potential function of the
holomorphic vector field $-V$ with respect to $\omega_{\phi}$. 
\end{enumerate}

The existence of a $\sigma$-soliton is well understood.
Han-Li \cite{HaLi20} showed that the existence is equivalent to an
algebraic stability condition for a Fano manifold.
The authors \cite{NN22} established a combinatorial condition to
characterize the existence for a $\sigma$-soliton
on a KSM-manifold which is a certain toric bundle
including a toric Fano manifold.

As an application of Theorem~\ref{main thm},
we obtain a relation of the existence for
a $\sigma$-soliton and a $\sigma$-extremal K\"{a}hler metric,
which is the second result in this article.

\begin{theorem}\label{2nd thm}
Let $M$ be a Fano manifold admitting a $\sigma$-soliton in
$\Omega=2\pi{c_{1}(M)}$.
Then $M$ also admits a $\sigma$-extremal K\"{a}hler metric in
$\Omega=2\pi{c_{1}(M)}$.
\end{theorem}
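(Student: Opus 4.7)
The plan is to deduce Theorem~\ref{2nd thm} from Theorem~\ref{main thm} by showing that the existence of a $\sigma$-soliton forces the $\sigma$-Mabuchi functional to be coercive, and then invoking the coercivity characterization of $\sigma$-extremal K\"ahler metrics. The excerpt already hints at the mechanism: the inequality \eqref{KgeqD}, which presumably compares a $\sigma$-Mabuchi-type functional $\mathcal{M}_\sigma$ with a $\sigma$-Ding-type functional $\mathcal{D}_\sigma$ on $\mathcal{H}_V$, under the convexity assumption (i) $\dot\sigma\leqq 0 \leqq \ddot\sigma$ or (ii) $\ddot\sigma>0$ on $\sigma$.

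First I would set up the $\sigma$-Ding functional on $\mathcal{H}_V$, whose Euler--Lagrange equation is precisely $\rho_\phi + \sigma(\theta_V^{(\phi)})=0$, i.e.\ the $\sigma$-soliton equation expressed through the Ricci potential normalization~\eqref{Ricci}. In the Fano case $\Omega = 2\pi c_1(M)$, this functional is the natural analogue of Ding's functional adapted to the weight $\sigma$. Once such a functional is available, I would invoke the $\sigma$-analogue of Tian's theorem (Berman--Berndtsson--style convexity along weak geodesics, plus the standard criticality characterization of minimizers) to show that the existence of a $\sigma$-soliton $\omega_\psi$ implies that $\mathcal{D}_\sigma$ is $G$-coercive, i.e.\ coercive modulo the action of $G=\mathrm{Aut}_0(M,V)$, where $G$ preserves $\mathcal{H}_V$ and shifts $\mathcal{D}_\sigma$ by its kernel. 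The assumption $\dot\sigma\leqq 0\leqq\ddot\sigma$ or $\ddot\sigma>0$ provides the convexity needed here; this is the standard ``existence implies coercivity'' route, going back to Tian for K\"ahler-Einstein and generalized by Han--Li \cite{HaLi20} to $\sigma$-solitons.

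Next I would combine this with the inequality \eqref{KgeqD}, which under the stated conditions on $\sigma$ gives
\[
\mathcal{M}_\sigma(\phi) \;\geqq\; \mathcal{D}_\sigma(\phi) + C
\]
for all $\phi\in\mathcal{H}_V$, with a uniform constant $C$ depending only on normalizations. Coercivity of $\mathcal{D}_\sigma$ modulo $G$ then immediately implies coercivity of $\mathcal{M}_\sigma$ modulo $G$. Applying Theorem~\ref{main thm} (or more precisely the generalized version Theorem~\ref{main thm'} applied to $h(s) = 1 - e^{-\sigma(s)}$, which verifies the integral normalization $\int_M h(\theta_V^{(\phi)})\omega_\phi^n = 0$ because $\sigma$ is normalized so that $\int_M e^{-\sigma(\theta_V^{(\phi)})}\omega_\phi^n = \int_M \omega_\phi^n$) then produces the desired $\sigma$-extremal K\"ahler metric in $\mathcal{H}_V$.

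The main obstacle I expect is the proper formulation and justification of the $\mathcal{M}_\sigma\geqq\mathcal{D}_\sigma$ inequality in this weighted setting: one has to write $\mathcal{M}_\sigma$ as an entropy plus linear terms (a Chen--Tian decomposition adapted to the weight $e^{-\sigma}$), apply the Jensen inequality to the entropy of $e^{\rho_\phi+\sigma(\theta_V^{(\phi)})}\omega_\phi^n$, and use $\dot\sigma\leqq 0\leqq\ddot\sigma$ (or $\ddot\sigma>0$) to control the sign of the remaining weighted terms involving $\theta_V^{(\phi)}$. The second, more technical point is the passage from coercivity of $\mathcal{D}_\sigma$ at a $\sigma$-soliton to honest coercivity of $\mathcal{M}_\sigma$ in the sense of Definition~\ref{coercivity}, for which one must verify that the $J$-functional used to measure distance in both functionals is the same and that the $G$-reduction is compatible; this essentially mirrors the Mabuchi soliton case \cite{Hi19-2, Mabook, Ya22} cited in the introduction, to which the present argument reduces when $\sigma(s)=-\log(s+1)$.
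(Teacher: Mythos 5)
Your proposal is correct and follows essentially the same route as the paper: the authors prove (Theorem~\ref{3rd thm}, via the Darvas--Rubinstein existence/properness principle) that a $\sigma$-soliton forces coercivity of the $\sigma$-Ding functional, transfer this to the $\sigma$-Mabuchi functional through the inequality~\eqref{KgeqD}, and conclude by Theorem~\ref{main thm}. The only small over-caution on your part is that~\eqref{KgeqD} is obtained by Jensen's inequality alone and needs no sign or convexity hypotheses on $\sigma$.
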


When $\sigma(s)=-\log(s+1)$, this relation is already known for the
literature
as stated in the beginning of the paper.
We note that a $\sigma$-soliton and a $\sigma$-extremal metric are
different in general. See Section~\ref{sigma-soliton}.

\subsection*{Organization}
In Section~\ref{Preliminary} various energy functionals are introduced.
In particular the $h$-modified Mabuchi functional 
 whose
critical points are
$h$-modified extremal K\"{a}hler metrics is introduced.
In Section~\ref{exist/proper},
based on the existence\,/\,properness principle established by
Darvas-Rubinstein
\cite{Dar19,DR17}, we prove Theorem~\ref{main thm}.
In Section~\ref{gstability}, based on the work of Chen-Cheng \cite{CC2},
we show that the the existence of an $h$-modified 
extremal K\"{a}hler metric is equivalent to some notions of geodesic
stability.
In Section~\ref{sigma-soliton}, we prove Theorem~\ref{2nd thm} and
provide
an example of an $h$-modified extremal K\"{a}hler metric related to
a K\"{a}hler-Ricci soliton.

\subsection*{Acknowledgement}
The authors thank the referees for their careful reading and
for numerous useful suggestions which improved the presentation of the paper.
\section{Preliminaries}\label{Preliminary}
\subsection{The metric completion of $\mathcal{H}_{V}$}
Following the works of Darvas \cite{Dar15, Dar17}, 
we introduce the completion of $\mathcal{H}$.
For $p \geqq{1}$, the length of a smooth curve
$[0,1]\ni t \mapsto c_{t}\in\mathcal{H}$ is given by
$\int_{0}^{1}\| \dot{c_{t}} \|_{L^{p}(M, \omega_{c_{t}})}dt.$
For any $\phi_{0},\phi_{1}\in\mathcal{H}$, the distance
$d_{p}(\phi_{0},\phi_{1})$ 
is defined as the infimum of the length of smooth curves connecting
$\phi_{0}$ and $\phi_{1}$.
The pair $(\mathcal{H}, d_{p})$ is in fact a metric space \cite{Dar15}.
Let
$$
\mathrm{PSH}(M, \omega_{0}):=
\Set{ \phi\in L^{1}(M, \omega_{0}) | \phi
\text{ is u.s.c. and }\omega_{\phi}\geqq{0}}
$$
and
$$
\mathcal{E}^{p}(M, \omega_{0}):=
\Set{ \phi\in\mathrm{PSH}(M, \omega_{0}) |
\int_{M}\omega_{\phi}^{n}=\Vol, \quad
\int_{M}|\phi|^{p}\omega_{\phi}^{n}<+\infty}.
$$
Darvas \cite{Dar15} showed that
the completion of the metric space $(\mathcal{H}, d_{p})$ is identified
with
$(\mathcal{E}^{p}, d_{p})$, where
$d_{p}(u_{0}, u_{1})=
 \lim_{k\to\infty}d_{p}\bigl(u_{0}(k),u_{1}(k)\bigr)$
for any smooth decreasing sequences $\{ u_{i}(k) \}_{k\in\mathbb{N}}$ in
$\mathcal{H}$
converging pointwise to $u_{i}\in\mathcal{E}^{p}$.
We will need the case when $p=1$ for our purpose.
According to \cite[Lemma~5.4]{DR17}, we can take the completion of
$(\mathcal{H}_{V}, d_{1})$
which we denote by $(\mathcal{E}_{V}^{1}, d_{1})$.

\subsection{Energy functionals}
At first, we work on a compact K\"{a}hler manifold $M$.
Set $\mathrm{Vol}:=\int_{M}\omega_{0}^{n}$.
For any $\phi\in\mathcal{H}$, we define the
{\it Aubin-Yau} (or {\it Aubin-Mabuchi}) {\it functional} as follows.
$$
E(\phi)=E_{\omega_{0}}(\phi):=
\frac{\mathrm{Vol}^{-1}}{n+1}\int_{M}\phi\sum_{j=0}^{n}\omega_{0}^{j}
\wedge\omega_{\phi}^{n-j}.
$$
We also define Aubin's I-{\it functional} and J-{\it functional} as
follows.
\begin{align*}
&I(\phi)=I_{\omega_{0}}(\phi):=
\vol^{-1}\int_{M}\phi(\omega_{0}^{n}-\omega_{\phi}^{n}),\\
&J(\phi)=J_{\omega_{0}}(\phi):=
\vol^{-1}\int_{M}\phi\omega_{0}^{n}-E_{\omega_{0}}(\phi).
\end{align*}
Set $\J(\phi)=I(\phi)-J(\phi)$.
Note that they satisfy the classical inequality
\begin{equation}\label{IJ}
0\leqq{I(\phi)}\leqq{(n+1)\J(\phi)}\leqq{nI(\phi)}.
\end{equation}
For $(1,1)$-form $\chi$, we define
$$
\J_{\chi}(\phi)=\J_{\chi, \omega_{0}}(\phi):=
\mathrm{Vol}^{-1}\int_{M}\phi\sum_{j=0}^{n-1}
\chi\wedge\omega_{0}^{j}\wedge\omega_{\phi}^{n-1-j}
-\underline{\chi}E(\phi)
$$
where
$\underline{\chi}=n\int_{M}\chi\wedge\omega^{n-1}/\int_{M}\omega^{n}$.
In particular, we denote
$$
\J_{-\mathrm{Ric}}(\phi):=\underline{S}E(\phi)
-\mathrm{Vol}^{-1}\int_{M}\phi\sum_{j=0}^{n-1}
\mathrm{Ric}(\omega_{0})\wedge\omega_{0}^{j}\wedge\omega_{\phi}^{n-1-j}.
$$
The entropy functional is defined as
$$
H(\phi)=H_{\omega_{0}}(\phi):=
\mathrm{Vol}^{-1}\int_{M}\log
\Bigl(\frac{\omega_{\phi}^{n}}{\omega_{0}^{n}}\Bigr) 
\omega_{\phi}^{n}.
$$
Fix a real valued function $h$ on the interval $I$
satisfying $\int_{M}h(\theta_{V}^{(\phi)})\omega_{\phi}^{n}=0$.
For any $\phi\in\mathcal{H}_{V}$, we introduce
the {\it $h$-modified Aubin-Yau functional}
$$
\J_{V}^{h}(\phi):=
\mathrm{Vol}^{-1}\int_{0}^{1}dt\int_{M}\frac{d\phi_{t}}{dt}
h(\theta_{V}^{(\phi_{t})})\omega_{\phi_{t}}^{n},
$$
where $\phi_{t}\in\mathcal{H}_{V}$ is a smooth path connecting $0$
and $\phi$.
According to \cite[Lemma 2.14]{BW14} and \cite[Lemma 3]{Lah19},
the functional $\mathbb{J}_{V}^{h}$ is independent of the choice of the
path.
We define
$$
E_{V}^{h}:=E-\J_{V}^{h}
$$
and to define the {\it $h$-modified Mabuchi functional} as
$$
K_{V}^{h}:=H+\J_{-\mathrm{Ric}}+\J_{V}^{h}.
$$
An {\it $h$-modified extremal K\"{a}hler metric} which is defined by the
equation
$S(\omega_{\phi})-\underline{S}=h(\theta_{V}^{(\phi)})$
is a critical point of $K_{V}^{h}$ since
\begin{equation}\label{dK}
\delta K_{V}^{h}=-\mathrm{Vol}^{-1}\int_{M}\delta\phi
\Bigl( S(\phi)-\underline{S}-h(\theta_{V}^{(\phi)})
\Bigr)\omega_{\phi}^{n}.
\end{equation}
When $h(s)=1-e^{-\sigma(s)}$, we call the functional $K_{V}^{h}$ the
$\sigma$-{\it{Mabuchi functional}}.

\begin{lemma}\label{Jaffine}
The following hold.
\begin{enumerate}
\item[(i)] When $1-h\geqq{0}$ on $I$, for any $\phi_{0}\leqq\phi_{1}$,
we have $E_{V}^{h}(\phi_{0})\leqq{E_{V}^{h}(\phi_{1})}$.
\item[(ii)] {\rm (\cite[Proposition 2.17]{BW14})}
The functions $E_{V}^{h}$ and $\J_{V}^{h}$ are affine along
any $C^{1,1}$-geodesic connecting two points in $\mathcal{H}_{V}$.
\end{enumerate}
\end{lemma}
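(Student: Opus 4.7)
The plan is to prove both parts by first-variation computations along appropriately chosen paths in $\mathcal{H}_{V}$, exploiting that $\J_{V}^{h}$ is path-independent (as already invoked in its definition).

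For (i), I would connect $\phi_{0}$ and $\phi_{1}$ by the linear interpolation $\phi_{t}:=(1-t)\phi_{0}+t\phi_{1}$. The key point is that $\omega_{\phi_{t}}=(1-t)\omega_{\phi_{0}}+t\omega_{\phi_{1}}$ is a convex combination of K\"{a}hler forms and is therefore positive pointwise, while $V_{\mathrm{Im}}$-invariance is preserved under linear combinations, so $\phi_{t}\in\mathcal{H}_{V}$ for every $t\in[0,1]$; in particular $\theta_{V}^{(\phi_{t})}$ stays inside $I$ since its extrema are independent of the potential chosen in $\mathcal{H}_{V}$. Combining path-independence of $\J_{V}^{h}$ with the standard first variation $\frac{d}{dt}E(\phi_{t})=\mathrm{Vol}^{-1}\int_{M}\dot{\phi}_{t}\,\omega_{\phi_{t}}^{n}$, one obtains
\[
\frac{d}{dt}E_{V}^{h}(\phi_{t})=\mathrm{Vol}^{-1}\int_{M}(\phi_{1}-\phi_{0})\bigl(1-h(\theta_{V}^{(\phi_{t})})\bigr)\omega_{\phi_{t}}^{n},
\]
which is nonnegative under the assumption $1-h\geqq 0$ on $I$. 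Integration over $t\in[0,1]$ gives (i).

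For (ii), I would split $E_{V}^{h}=E-\J_{V}^{h}$ and handle each summand separately. Affineness of $E$ along smooth geodesics is classical: $\frac{d^{2}}{dt^{2}}E(\phi_{t})=\mathrm{Vol}^{-1}\int_{M}(\ddot{\phi}_{t}-|\partial\dot{\phi}_{t}|^{2}_{\omega_{\phi_{t}}})\omega_{\phi_{t}}^{n}$, which vanishes by the geodesic equation, and the extension to $C^{1,1}$-geodesics follows from Chen's $\varepsilon$-approximation scheme. For $\J_{V}^{h}$ I would invoke \cite[Proposition~2.17]{BW14}; should one prefer to redo the computation, the crucial input is the identity $\dot{\theta}_{V}^{(\phi_{t})}=V\dot{\phi}_{t}+c(t)$, obtained by differentiating $i_{V}\omega_{\phi_{t}}=\sqrt{-1}\,\dbar\theta_{V}^{(\phi_{t})}$ in $t$, together with integration by parts using the same potential identity and the Kählerness of $\omega_{\phi_{t}}$.

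The main obstacle is the cancellation that produces affineness of $\J_{V}^{h}$. Differentiating $\frac{d}{dt}\J_{V}^{h}(\phi_{t})=\mathrm{Vol}^{-1}\int_{M}\dot{\phi}_{t}h(\theta_{V}^{(\phi_{t})})\omega_{\phi_{t}}^{n}$ once more yields three contributions—from $\ddot{\phi}_{t}$, from $h'(\theta_{V}^{(\phi_{t})})\dot{\theta}_{V}^{(\phi_{t})}$, and from the variation $n\,\deldel\dot{\phi}_{t}\wedge\omega_{\phi_{t}}^{n-1}$ of $\omega_{\phi_{t}}^{n}$—and matching them to zero requires \emph{both} the geodesic equation $\ddot{\phi}_{t}=|\partial\dot{\phi}_{t}|^{2}_{\omega_{\phi_{t}}}$ and the potential identity above, after an integration by parts. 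Passing from smooth to $C^{1,1}$-geodesics adds a regularization layer since the geodesic equation then holds only almost everywhere; this is precisely what the cited proposition in \cite{BW14} carries out in a framework general enough to cover our $h$-modified functional, so invoking it closes (ii).
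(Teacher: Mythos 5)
Your proposal is correct and follows essentially the same route as the paper: part (i) is proved there by exactly your argument (the linear path $\phi_{t}=t(\phi_{1}-\phi_{0})+\phi_{0}$ and the sign of $\dot{\phi}_{t}\,(1-h(\theta_{V}^{(\phi_{t})}))$), and part (ii) rests on the same decomposition, the classical affineness of $E$, and the vanishing of $\partial\overline{\partial}_{\tau}E_{V}^{h}$ along geodesics. The only difference is one of detail: where you delegate the affineness of $\J_{V}^{h}$ to \cite[Proposition~2.17]{BW14}, the paper reproduces the Berndtsson/Li--Zhou computation explicitly, approximating the $C^{1,1}$-geodesic by smooth $\e$-geodesics, showing $\bigl|\partial^{2}_{\tau\bar{\tau}}E_{V}^{h}(\phi_{t}^{\e})\bigr|<\e C$ via the integration by parts you describe, and letting $\e\to 0$ -- precisely the regularization layer you anticipated.
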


\begin{proof}
First we prove (i).
Note that for any smooth path $\phi_{t}\in\mathcal{H}_{V}$ connecting
$\phi_{0}$ and $\phi_{1}$,
$$
E_{V}^{h}(\phi_{1})-E_{V}^{h}(\phi_{0})
=\mathrm{Vol}^{-1}\int_{0}^{1}dt\int_{M}\dot{\phi_{t}}
(1-h(\theta_{V}^{(\phi_{t})}))\omega_{\phi_{t}}^{n}.
$$
Thus taking $\phi_{t}:=t(\phi_{1}-\phi_{0})+\phi_{0}$, we have
$E_{V}^{h}(\phi_{0})\leqq{E_{V}^{h}(\phi_{1})}$.

Next we prove (ii). 
The affineness for $E$ along any $C^{1,1}$-geodesic segment is well
known. 
We prove $E_{V}^{h}$ is affine.
The following proof is same as \cite[Proposition~10.4]{Bern15} and
\cite[Lemma~2.5]{LZ}.
Take a unique $C^{1,1}$-geodesic $\phi_{t}$ connecting
$\phi_{0},\,\phi_{1}\in\mathcal{H}_{V}$
and small $\e>0$.
By \cite{C20} the geodesic $\phi_{t}$ can be approximated by smooth
$\e$-geodesics
$\phi_{t}^{\e}\in\mathcal{H}_{V}$ connecting $\phi_{0}, \phi_{1}$
defining
$$
\Big(
\frac{\partial^{2}\phi_{t}^{\e}}{\partial\tau\partial\bar{\tau}}
-\Big|
\bar{\partial}\Big( \frac{\partial\phi_{t}^{\e}}{\partial\tau}
\Big)
\Big|^{2}_{\omega_{\phi_{t}^{\e}}}
\Big)
\omega_{\phi_{t}^{\e}}^{n}
=\e\omega_{0}^{n}
$$
on
$M\times([0,1]\times S^{1})\subset M\times\mathbb{C}$
with $t=\mathrm{Re}(\tau)$.
The argument by integration by parts in the proof of
\cite[Lemma~2.5]{LZ} 
(we need to replace
$1-\theta_{X}(\omega_{\phi_{t}^{\e}})$ by 
$1-h(\theta_{V}^{\phi_{t}^{\e}})$)
shows
\begin{eqnarray*}
\frac{\partial^{2}}{\partial\tau\partial\bar{\tau}}
E_{V}^{h}(\phi_{t}^{\e})
&=&
-\mathrm{Vol}^{-1}\int_{M}
\Big(
\frac{\partial^{2}\phi_{t}^{\e}}{\partial\tau\partial\bar{\tau}} 
-\Big|
\bar{\partial}\Big( \frac{\partial\phi_{t}^{\e}}{\partial\tau}
\Big)
\Big|^{2}_{\omega_{\phi_{t}^{\e}}}
\Big)
(1-h(\theta_{V}^{\phi_{t}^{\e}}))\omega_{\phi_{t}^{\e}}^{n}\\
&=&
-\e\mathrm{Vol}^{-1}\int_{M}
(1-h(\theta_{V}^{\phi_{t}^{\e}}))\omega_{0}^{n}.
\end{eqnarray*}
Since there exists a uniform constant $C$ 
independent of $\phi\in\mathcal{H}_{V}$
such that $|\theta_{V}^{(\phi)}|<C$,
it follows that
$$
\Big|\frac{\partial^{2}}{\partial\tau\partial\bar{\tau}}
E_{V}^{h}(\phi_{t}^{\e})\Big|<\e C.
$$
When $\e\to{0}$,
$\deldel_{\tau}E_{V}^{h}(\phi_{t}^{\e})$ converges weakly to
$\deldel_{\tau}E_{V}^{h}(\phi_{t})$ as Monge-Amp\`{e}re measures,
where $\partial\overline{\partial}_{\tau}$ is
$\partial\overline{\partial}$-operator with respect to
$\tau\in\mathbb{C}$.
Thus $\deldel_{\tau}E_{V}^{h}(\phi_{t})=0$.
This completes the proof.
\end{proof}

The above lemma together with the convexity of the K-energy
$K:=H+\J_{-\mathrm{Ric}}$
proved by Berman-Berndtsson \cite{BB17} shows that the functional
$K_{V}^{h}$
is convex along $C^{1,1}$ geodesic segments in $\mathcal{H}_{V}$,
and a $h$-modified extremal metric minimizes $K_{V}^{h}$.

Berman-Darvas-Lu \cite{BDL20} showed that
the K-energy $K:=H+\mathbb{J}_{-\mathrm{Ric}}$ on $\mathcal{H}$ can be
extended to a functional
$\mathcal{E}^{1}\to\mathbb{R}\cup\{+\infty\}$ using the same formula
and such a K-energy in $\mathcal{E}^{1}$ is the greatest $d_{1}$-lsc
extension of $K$ on $\mathcal{H}$.
Moreover they showed the K-energy is convex along finite energy
geodesics in $\mathcal{E}^{1}$.
These fact and the following lemma show that
the functional $K_{V}^{h}$ can be extended to a functional
$\mathcal{E}^{1}_{V}\to\mathbb{R}\cup\{+\infty\}$
and the extended functional $K_{V}^{h}$ on $\mathcal{E}^{1}_{V}$ is the
greatest $d_{1}$-lsc extension of
$K_{V}^{h}$ on $\mathcal{H}_{V}$.
Moreover, $K_{V}^{h}$ is convex along finite energy geodesics in
$\mathcal{E}^{1}_{V}$,
and a $h$-modified extremal K\"{a}hler  metric is a minimizer on
$\mathcal{E}^{1}_{V}$.

\begin{lemma}
{\rm (\cite[Lemma 6.10]{AJL23})}
The function $\J_{V}^{h}$ on $\mathcal{H}_{V}$ has a unique
$d_{1}$-continuous
extension to $\mathcal{E}^{1}_{V}$.
\end{lemma}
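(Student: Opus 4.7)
The strategy is to show that $\mathbb{J}_{V}^{h}$ is $d_{1}$-Lipschitz on $\mathcal{H}_{V}$ and then to invoke the standard extension-by-density theorem, using that $(\mathcal{E}^{1}_{V},d_{1})$ is the completion of $(\mathcal{H}_{V},d_{1})$. Uniqueness of the extension is automatic from density and continuity, so the real content is the Lipschitz estimate.

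First I would combine two ingredients already visible in the excerpt. The derivative formula
$$
\mathbb{J}_{V}^{h}(\phi_{1})-\mathbb{J}_{V}^{h}(\phi_{0})
=\mathrm{Vol}^{-1}\int_{0}^{1}\!dt\int_{M}\dot{\phi}_{t}\,
h(\theta_{V}^{(\phi_{t})})\,\omega_{\phi_{t}}^{n}
$$
holds for every smooth path $\phi_{t}\in\mathcal{H}_{V}$ connecting $\phi_{0}$ and $\phi_{1}$, by the path-independence recalled just after the definition of $\mathbb{J}_{V}^{h}$. Note that $\mathcal{H}_{V}$ is convex, since the defining condition $L_{V_{\mathrm{Im}}}\omega_{\phi}=0$ is linear in $\phi$, so the straight-line path $\phi_{t}=(1-t)\phi_{0}+t\phi_{1}$ stays inside $\mathcal{H}_{V}$. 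At the same time, the Futaki--Mabuchi invariance already used in Lemma~\ref{Jaffine} provides a uniform constant $C_{0}$ with $|\theta_{V}^{(\phi)}|\leqq C_{0}$ for every $\phi\in\mathcal{H}_{V}$; in particular $\theta_{V}^{(\phi)}$ takes values in a fixed compact subinterval of $I$, so $C_{h}:=\sup_{|s|\leqq C_{0}}|h(s)|$ is finite.

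Combining these, I obtain (absorbing the Vol normalization of $d_{1}$ into the constant if necessary)
$$
|\mathbb{J}_{V}^{h}(\phi_{1})-\mathbb{J}_{V}^{h}(\phi_{0})|
\leqq C_{h}\int_{0}^{1}\|\dot{\phi}_{t}\|_{L^{1}(M,\omega_{\phi_{t}})}\,dt,
$$
and passing to the infimum over smooth paths yields the Lipschitz bound $|\mathbb{J}_{V}^{h}(\phi_{1})-\mathbb{J}_{V}^{h}(\phi_{0})|\leqq C_{h}\,d_{1}(\phi_{0},\phi_{1})$. A Lipschitz function on a dense subset of a complete metric space admits a unique Lipschitz (in particular $d_{1}$-continuous) extension, which produces the desired extension of $\mathbb{J}_{V}^{h}$ to $\mathcal{E}^{1}_{V}$.

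The hard part of the lemma is really its two prerequisites rather than the extension itself: path-independence of the primitive definition of $\mathbb{J}_{V}^{h}$ (\cite{BW14,Lah19}) and the uniform $L^{\infty}$ bound on $\theta_{V}^{(\phi)}$ independent of $\phi\in\mathcal{H}_V$ (\cite{FM}). Granted those inputs, the argument is soft; the only point of care is tracking the Vol normalization consistently between the definitions of $\mathbb{J}_{V}^{h}$ and $d_{1}$, so that the Lipschitz constant is correctly identified.
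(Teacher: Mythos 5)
Your argument is essentially identical to the paper's proof: a uniform bound on $\theta_{V}^{(\phi)}$ (hence on $h(\theta_{V}^{(\phi)})$) gives $|\J_{V}^{h}(\phi_{0})-\J_{V}^{h}(\phi_{1})|\leqq C\int_{0}^{1}\|\dot{\phi_{t}}\|_{L^{1}(M,\omega_{\phi_{t}})}\,dt$, and taking the infimum over smooth curves yields the Lipschitz bound $Cd_{1}(\phi_{0},\phi_{1})$, after which the unique continuous extension to the completion is automatic. The only cosmetic differences are that you cite \cite{FM} where the paper cites \cite{Zh00} for the uniform bound on $\theta_{V}^{(\phi)}$, and you make explicit the step from the bound on $\theta_{V}^{(\phi)}$ to the bound on $h(\theta_{V}^{(\phi)})$, which the paper leaves implicit.
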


\begin{proof}
Note that there exists a uniform constant $C$ 
independent of $\phi\in\mathcal{H}_{V}$ by \cite{Zh00}
such that $|\theta_{V}^{(\phi)}|<C$.
Take any smooth curve $\phi_{t}$ in $\mathcal{H}_{V}$ connecting
$\phi_{0}$ and $\phi_{1}$.
Observe
$$
|\J_{V}^{h}(\phi_{0})-\J_{V}^{h}(\phi_{1})|
\leqq{C\int_{0}^{1}dt\int_{M}|\dot{\phi_{t}}|\omega_{\phi_{t}}^{n}}
$$
to show
$|\J_{V}^{h}(\phi_{0})-\J_{V}^{h}(\phi_{1})|
\leqq{C}d_{1}(\phi_{0},\phi_{1})$
by taking infimum over all smooth curves.
\end{proof}

Consider the case when $M$ is Fano and $\Omega=2\pi c_{1}(M)$.
We define
$$
L(\phi):=-\log
\Bigl(\mathrm{Vol}^{-1}\int_{M}e^{\rho_{0}-\phi}\omega_{0}^{n}\Bigr)
$$
to introduce the {\it $h$-modified Ding functional} as
$$
D_{V}^{h}:=-E_{V}^{h}+L.
$$
A $\sigma$-soliton is a critical point of $D_{V}^{h}$ with
$h(s)=1-e^{-\sigma(s)}$ since
$$
\delta D_{V}^{h}=-\mathrm{Vol}^{-1}\int_{M}\delta\phi
\Bigl(1-h(\theta_{V}^{\phi}) -e^{\rho_{\phi}} \Bigr) \omega_{\phi}^{n}.
$$
We call the functional $D_{V}^{h}$ with $h(s)=1-e^{-\sigma(s)}$
the {\it $\sigma$-Ding functional}. 

The inequality in the following lemma motivated to introduce the notion
of an $\sigma$-extremal K\"{a}hler metric.
This is well-known for the literature in the case when $V=0$ and in the
case when $h(s)=s$.
See \cite[Section 9.6]{Mabook} for example.

\begin{lemma}
Let $M$ be a Fano manifold with $\Omega=2\pi c_{1}(M)$.
For any $\phi\in\mathcal{H}_{V}$, we have
\begin{equation}\label{FanoK}
K_{V}^{h}(\phi)
=\mathrm{Vol}^{-1}\int_{M}\log 
\Bigl(\frac{\omega_{\phi}^{n}}{e^{\rho_{0}}\omega_{0}^{n}}\Bigl)
\omega_{\phi}^{n}
-E_{V}^{h}(\phi)+\mathrm{Vol}^{-1}\int_{M}\phi\omega_{\phi}^{n}
+\mathrm{Vol}^{-1}\int_{M}\rho_{0}\omega_{0}^{n}.
\end{equation}
As a consequence, for any $\phi\in\mathcal{H}_{V}$ we have
\begin{equation}\label{KgeqD}
K_{V}^{h}(\phi)-\mathrm{Vol}^{-1}\int_{M}\rho_{0}\omega_{0}^{n}
\geqq{D_{V}^{h}(\phi)}.
\end{equation}
\end{lemma}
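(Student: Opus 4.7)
The plan is to proceed in two steps: establish the identity \eqref{FanoK} by a direct computation, then deduce the inequality \eqref{KgeqD} as a one-line Jensen's inequality argument. First, I would observe that the $h$-dependence cancels: combining $K_{V}^{h} = H + \J_{-\mathrm{Ric}} + \J_{V}^{h}$ with $E_{V}^{h} = E - \J_{V}^{h}$, the sum $K_{V}^{h} + E_{V}^{h}$ equals $H + \J_{-\mathrm{Ric}} + E$. After pulling $-\rho_{0}$ out of $\log(\omega_{\phi}^{n}/(e^{\rho_{0}}\omega_{0}^{n}))$ on the right-hand side of \eqref{FanoK} and absorbing the piece $-\vol^{-1}\int_{M}\rho_{0}\,\omega_{\phi}^{n}$ into the entropy term, the claim reduces to the $h$-free identity
$$\J_{-\mathrm{Ric}}(\phi) + E(\phi) = \vol^{-1}\int_{M}(\phi - \rho_{0})\,\omega_{\phi}^{n} + \vol^{-1}\int_{M}\rho_{0}\,\omega_{0}^{n}.$$

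To prove this, I would invoke the Fano normalization $\mathrm{Ric}(\omega_{0}) = \omega_{0} + \deldel\rho_{0}$ (so that $\underline{S} = n$) and split $\sum_{j=0}^{n-1}\mathrm{Ric}(\omega_{0})\wedge\omega_{0}^{j}\wedge\omega_{\phi}^{n-1-j}$ accordingly. The $\omega_{0}$-piece reindexes to $\sum_{k=1}^{n}\omega_{0}^{k}\wedge\omega_{\phi}^{n-k}$, so integrating against $\phi$ and comparing with the definition of $E$ gives $(n+1)\vol\cdot E(\phi) - \int_{M}\phi\,\omega_{\phi}^{n}$. For the $\deldel\rho_{0}$-piece I would integrate by parts, use $\deldel\phi = \omega_{\phi} - \omega_{0}$, and observe that the resulting sum $\sum_{j=0}^{n-1}(\omega_{\phi} - \omega_{0})\wedge\omega_{0}^{j}\wedge\omega_{\phi}^{n-1-j}$ telescopes to $\omega_{\phi}^{n} - \omega_{0}^{n}$, yielding $\int_{M}\rho_{0}(\omega_{\phi}^{n} - \omega_{0}^{n})$. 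Combining these with $\underline{S}E(\phi) = nE(\phi)$ produces the desired identity, and hence \eqref{FanoK}.

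For the inequality \eqref{KgeqD}, I would substitute \eqref{FanoK} into the left-hand side and use $D_{V}^{h}(\phi) = -E_{V}^{h}(\phi) + L(\phi)$; the $-E_{V}^{h}$ terms cancel, and recombining $\vol^{-1}\int_{M}\phi\,\omega_{\phi}^{n}$ with the logarithmic term reduces the inequality to
$$\vol^{-1}\int_{M}\log\Bigl(\frac{\omega_{\phi}^{n}}{e^{\rho_{0} - \phi}\omega_{0}^{n}}\Bigr)\omega_{\phi}^{n} \;\geqq\; -\log\Bigl(\vol^{-1}\int_{M}e^{\rho_{0} - \phi}\omega_{0}^{n}\Bigr).$$
This is precisely Jensen's inequality applied to the convex function $-\log$ against the probability measure $\omega_{\phi}^{n}/\vol$ (equivalently, the non-negativity of the relative entropy of $\omega_{\phi}^{n}/\vol$ with respect to the normalized measure $e^{\rho_{0} - \phi}\omega_{0}^{n}/\int_{M}e^{\rho_{0} - \phi}\omega_{0}^{n}$).

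I do not anticipate any analytic obstacle; the only mildly delicate point is keeping the reindexings and signs straight in the telescoping integration-by-parts step, but that is routine bookkeeping.
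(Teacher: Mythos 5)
Your argument is correct, and both halves check out. For the inequality \eqref{KgeqD} you do essentially what the paper does: after substituting \eqref{FanoK} and cancelling $-E_{V}^{h}$, the claim is the non-negativity of a relative entropy, which the paper obtains by first normalizing $\int_{M}e^{\rho_{0}-\phi}\omega_{0}^{n}=\Vol$ (using translation invariance of $K_{V}^{h}$ and $D_{V}^{h}$) and then applying Jensen for $x\log x$; your unnormalized version via Jensen for $-\log$ is the same estimate. For the identity \eqref{FanoK}, however, your route is genuinely different: you expand $\J_{-\mathrm{Ric}}+E$ directly, using $\mathrm{Ric}(\omega_{0})=\omega_{0}+\deldel\rho_{0}$, $\underline{S}=n$, integration by parts, and the telescoping sum $\sum_{j}(\omega_{\phi}-\omega_{0})\wedge\omega_{0}^{j}\wedge\omega_{\phi}^{n-1-j}=\omega_{\phi}^{n}-\omega_{0}^{n}$, arriving at $\J_{-\mathrm{Ric}}(\phi)+E(\phi)=\Vol^{-1}\int_{M}(\phi-\rho_{0})\omega_{\phi}^{n}+\Vol^{-1}\int_{M}\rho_{0}\omega_{0}^{n}$, which is exactly what is needed; I have checked the signs and the reindexing and they are right. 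The paper instead avoids all of this bookkeeping by a variational argument: it observes that the first variation of the right-hand side of \eqref{FanoK} coincides with $\delta K_{V}^{h}$ as given in \eqref{dK}, and that both sides vanish at $\phi=0$. Your computation is longer but self-contained and makes the mechanism of the cancellation explicit (in particular it shows the identity is insensitive to the additive normalization of $\rho_{0}$), while the paper's derivative-plus-initial-value argument is shorter but leaves the verification that the two variations agree to the reader.
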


\begin{proof}
It is easy to see that the derivative of the RHS in \eqref{FanoK} equals
to the RHS in \eqref{dK}.
We note that the value of the RHS in \eqref{FanoK} is $0$ at $\phi=0$ to
show the equality.
Next we prove the inequality~\eqref{KgeqD}. We can assume
$\int_{M}e^{\rho_{0}-\phi}\omega_{0}^{n}=\Vol$.
This is because $K_{V}^{h}$ and $D_{V}^{h}$ are invariant under the
translation $\phi\mapsto\phi+\text{constant}$.
Observe that Jensen's inequality for the convex function $x\log x$
yields 
$$
\mathrm{Vol}^{-1}\int_{M}\log
\Bigl( \frac{\omega_{\phi}^{n}}{e^{\rho_{0}-\phi}\omega_{0}^{n}}\Bigl)
\omega_{\phi}^{n}
=\Vol^{-1}\int_{M}
\frac{\omega_{\phi}^{n}}{e^{\rho_{0}-\phi}\omega_{0}^{n}}\log
\Bigl( \frac{\omega_{\phi}^{n}}{e^{\rho_{0}-\phi}\omega_{0}^{n}}\Bigl)
e^{\rho_{0}-\phi}\omega_{0}^{n}
\geqq{0}.
$$
Therefore
$$
K_{V}^{h}(\phi)-\Vol^{-1}\int_{M}\rho_{0}\omega_{0}^{n}
\geqq{-E_{V}^{h}(\phi)}
=D_{V}^{h}(\phi).
$$
\end{proof}

\subsection{The notion of the coercivity}
Set $G:=\mathrm{Aut}_{0}(M,V)$,
$\mathcal{H}_{0}:=\mathcal{H}\cap E^{-1}(0)$ and
$\mathcal{H}_{V,0}:=\mathcal{H}_{V}\cap E^{-1}(0)$.
For any K\"{a}hler metric $\omega_{\phi}=\omega_{0}+\deldel\phi$
and $\sigma\in\mathrm{Aut}_{0}(M)$, we define a K\"{a}hler potential 
$\sigma[\phi]\in\mathcal{H}_{0}$ by
$\sigma^{*}\omega_{\phi}=\omega_{0}+\deldel\sigma[\phi]$.
Then for any $\phi, \psi \in\mathcal{H}_{0}$ the $d_{1}$-distance
relative to $G$ is defined by
$d_{1,G}(\phi,\psi):=
\inf_{\sigma_{1},\sigma_{2}\in{G}}
d_{1}(\sigma_{1}[\phi],\sigma_{2}[\psi])$
which is also equal to $\inf_{\sigma\in G}d_{1}(\phi,\sigma[\psi])$.
This distance can be defined on $\mathcal{H}_{V,0}$,
since the $G$-action preserves $\mathcal{H}_{V,0}$ 
and the inclusion $\mathcal{H}_{V,0}\subset\mathcal{H}_{0}$ is totally
geodesic.
For later use, we also define
$\mathcal{E}^{1}_{V,0}:=\mathcal{E}^{1}_{V}\cap E^{-1}(0)$
which is the completion of $\mathcal{H}_{V,0}$.

\begin{definition}\label{coercivity}
A 
functional $F$ on $\mathcal{H}_{V,0}$ is {\it coercive}
if
there exist constants $C,D>0$ such that for all
$\phi\in\mathcal{H}_{V,0}$,
$$
F(\phi)\geqq{Cd_{1,G}(0,\phi)-D}.
$$
\end{definition}

\subsection{Futaki type invariants}
Differentiating the functional $K_{V}^{h}$, we get a Futaki type
invariant for $M$.
For
$X\in\mathfrak{X}_{0}(M)$
and $\phi\in\mathcal{H}_{V}$, we put
$$
F_{V}^{h}(X):=-\int_{M}
\theta_{X}^{(\omega_{\phi})}
\Big(S(\omega_{\phi})-\underline{S}
-h(\theta_{V}^{(\omega_{\phi})})\Big)\omega_{\phi}^{n}.
$$
Since
$\int_{M}\left(S(\omega_{\phi})-\underline{S}\right)\omega_{\phi}^{n}=0$
and $\int_{M}h(\theta_{V}^{(\omega_{\phi})})\omega_{\phi}^{n}=0$,
there exist
$\rho_{\omega_{\phi}},\widetilde{h}_{V}^{(\omega_{\phi})}
\in{C^{\infty}(M;\mathbb{R})}$
satisfying
\begin{align*}
&S(\omega_{\phi})-\underline{S}=
\square_{\omega_{\phi}}\rho_{\omega_{\phi}}
:=\sum_{i,j=1}^{n}g^{\overline{\jmath}i}
\frac{\partial^{2}\rho_{\omega_{\phi}}}
{\partial{z^{i}}\partial\overline{z^{\jmath}}},\\
&\int_{M}e^{\rho_{\omega_{\phi}}}\omega_{\phi}^{n}=
\int_{M}\omega_{\phi}^{n},\\
&h(\theta_{V}^{(\omega_{\phi})})=
\square_{\omega_{\phi}}\widetilde{h}_{V}^{(\omega_{\phi})},
\end{align*}
where
$\omega_{\phi}=\sqrt{-1}\sum_{i,j=1}^{n}
g_{i\overline{\jmath}}dz^{i}\wedge\overline{dz^{\jmath}}$
and
$\left(g^{\overline{\jmath}i}\right)_{i,j=1}^{n}$ is the inverse matrix
of $\left(g_{i\overline{\jmath}}\right)_{i,j=1}^{n}$.
Then we have
$$
F_{V}^{h}(X)=
\int_{M}
X\left(\rho_{\omega_{\phi}}-\widetilde{h}_{V}^{(\omega_{\phi})}\right)
\omega_{\phi}^{n}.
$$

\begin{proposition}\label{prop:futaki-inv}
The value $F_{V}^{h}(X)$ is independent of the choice of
$\phi\in\mathcal{H}_{V}$.
In particular, if there exists an $h$-modified extremal K\"{a}hler
metric
in $\mathcal{H}_{V}$, the invariant $F_{V}^{h}$ vanishes identically.
\end{proposition}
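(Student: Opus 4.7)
The plan is to decompose $F_V^h(X) = F_{\mathrm{cl}}(X) + G_V^h(X)$, where $F_{\mathrm{cl}}(X) := -\int_M \theta_X^{(\omega_\phi)}\bigl(S(\omega_\phi) - \underline{S}\bigr)\omega_\phi^n$ is the classical Calabi--Futaki invariant and $G_V^h(X) := \int_M \theta_X^{(\omega_\phi)}\, h(\theta_V^{(\omega_\phi)})\,\omega_\phi^n$. The first piece is $\phi$-independent by the classical Calabi--Futaki theorem, whose standard proof rests on the identity $\mathcal{L}\theta_X = 0$ (where $\mathcal{L}$ is the Lichnerowicz operator, whose kernel consists of the complex potentials of holomorphic vector fields in $\mathfrak{X}_0(M)$) together with the contracted Bianchi identity. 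So the task reduces to showing that $G_V^h(X)$ is independent of $\phi \in \mathcal{H}_V$.

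Differentiating $G_V^h(X)$ along a smooth path $\phi_t \in \mathcal{H}_V$ with tangent $\psi := \dot\phi_t$, the preparatory identity is $\dot\theta_Y^{(\phi_t)} = Y(\psi)$ for any $Y \in \mathfrak{X}_0(M)$. This follows by differentiating $i_Y\omega_{\phi_t} = \sqrt{-1}\,\dbar\theta_Y^{(\phi_t)}$, which yields $\dot\theta_Y = Y(\psi) + c$ for some constant $c$; the normalization $\int_M \theta_Y\omega_\phi^n = 0$ combined with the divergence identity $\int_M Y(u)\omega^n = -\int_M u\,\square\theta_Y\,\omega^n$ (which itself follows from Stokes applied to $L_Y(u\,\omega^n)$ and $L_Y\omega^n = (\square\theta_Y)\omega^n$) forces $c=0$. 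Expanding
\begin{align*}
\frac{d}{dt}G_V^h(X)
&= \int_M X(\psi)\,h(\theta_V)\,\omega^n + \int_M \theta_X\,h'(\theta_V)\,V(\psi)\,\omega^n \\
&\quad + \int_M \theta_X\,h(\theta_V)\,\square\psi\,\omega^n,
\end{align*}
I apply the divergence identity for $X$ to the first term (with $u = \psi\,h(\theta_V)$), the divergence identity for $V$ to the second (with $u = \theta_X\,h'(\theta_V)\,\psi$), and the self-adjointness $\int f\,\square g\,\omega^n = \int g\,\square f\,\omega^n$ combined with the product rule
$$\square\bigl(\theta_X\,h(\theta_V)\bigr) = h(\theta_V)\,\square\theta_X + \theta_X\,\square h(\theta_V) + h'(\theta_V)\bigl[V(\theta_X) + X(\theta_V)\bigr]$$
to the third. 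Here the identities $V(\theta_X) = g^{\overline{\jmath}i}(\partial_i\theta_X)(\partial_{\overline{\jmath}}\theta_V)$ and $X(\theta_V) = g^{\overline{\jmath}i}(\partial_{\overline{\jmath}}\theta_X)(\partial_i\theta_V)$ follow from $X^i = g^{\overline{\jmath}i}\,\partial_{\overline{\jmath}}\theta_X$ and the analogue for $V$. After collecting, four matched pairs of integrals cancel outright and the residue
$$\int_M \psi\,\theta_X\,\bigl[\square h(\theta_V) - h''(\theta_V)\,V(\theta_V) - h'(\theta_V)\,\square\theta_V\bigr]\omega^n$$
vanishes by the chain rule $\square h(\theta_V) = h''(\theta_V)\,V(\theta_V) + h'(\theta_V)\,\square\theta_V$, which holds because $\theta_V$ is real on $\mathcal{H}_V$.

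The second assertion is then immediate: at an $h$-modified extremal K\"ahler metric $\omega_\phi$, the integrand $\theta_X\bigl(S - \underline{S} - h(\theta_V)\bigr)$ vanishes pointwise, so $F_V^h(X) = 0$ at that $\phi$; by the just-established independence, $F_V^h$ vanishes identically on $\mathfrak{X}_0(M)$. The main obstacle is the careful bookkeeping of the roughly eight integrals produced by the three IBP steps; once the four cancelling pairings and the final residue identity are identified, the cancellation is purely algebraic and uses nothing beyond the product rule for $\square$, the holomorphicity of $X$ and $V$, and the $V_{\mathrm{Im}}$-invariance of $\phi$.
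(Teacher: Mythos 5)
Your proposal is correct and follows essentially the same route as the paper: both split $F_V^h(X)$ into the classical Futaki invariant plus $\int_M\theta_X^{(\omega_\phi)}h(\theta_V^{(\omega_\phi)})\omega_\phi^n$, use the variation formulas $\dot\theta_X=X(\dot\phi)$, $\frac{d}{dt}h(\theta_V)=h'(\theta_V)V(\dot\phi)$, $\frac{d}{dt}\omega_\phi^n=(\square\dot\phi)\omega_\phi^n$, and kill the derivative by integration by parts. The paper merely packages your three integrations by parts into the single identity $\int_M\theta_Xh(\theta_V)\square\dot\phi\,\omega^n=-\int_M g_\phi\bigl(\overline{\partial}(\theta_Xh(\theta_V)),\overline{\partial}\dot\phi\bigr)\omega^n$, which cancels the other two terms directly.
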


\begin{proof}
Since
$-\int_{M}\theta_{X}^{(\omega_{\phi})}\left(S(\omega_{\phi})
-\underline{S}\right)\omega_{\phi}^{n}
=\int_{M}X\left(\rho_{\omega_{\phi}}\right)\omega_{\phi}^{n}$
is the original Futaki invariant (\cite{Fu83}, see also \cite{Fubook}),
this is independent of the choice of $\phi\in\mathcal{H}_{V}$.

For an arbitrary smooth path
$\left\{\phi_{t}\,|\,a\leqq{t}\leqq{b}\right\}$ in
$\mathcal{H}_{V}$,
by a simple calculation, we obtain
$$
\frac{d}{dt}\theta_{X}^{(\omega_{\phi_{t}})}
=X\!\left(\dot{\phi}_{t}\right),\qquad
\frac{d}{dt}h(\theta_{V}^{(\omega_{\phi_{t}})})
=V\!\left(\dot{\phi}_{t}\right)
h^{\prime}(\theta_{V}^{(\omega_{\phi_{t}})}),\qquad
\frac{d}{dt}\omega_{\phi_{t}}^{n}
=\left(\square_{\omega_{\phi_{t}}}\dot{\phi}_{t}\right)
\omega_{\phi_{t}}^{n}.
$$
In view of these equalities, we have
\begin{align*}
&\frac{d}{dt}\int_{M}\theta_{X}^{(\omega_{\phi_{t}})}
h(\theta_{V}^{(\omega_{\phi_{t}})})\omega_{\phi_{t}}^{n}\\
&=
\int_{M}\left\{
X\!\left(\dot{\phi}_{t}\right)h(\theta_{V}^{(\omega_{\phi_{t}})})
+\theta_{X}^{(\omega_{\phi_{t}})}
h^{\prime}(\theta_{V}^{(\omega_{\phi_{t}})})V\!\left(\dot{\phi}_{t}\right)
+\theta_{X}^{(\omega_{\phi_{t}})}h(\theta_{V}^{(\omega_{\phi_{t}})})
\square_{\omega_{\phi_{t}}}\dot{\phi}_{t}
\right\}\omega_{\phi_{t}}^{n}\\
&=
\int_{M}\left\{
X\!\left(\dot{\phi}_{t}\right)h(\theta_{V}^{(\omega_{\phi_{t}})})
+\theta_{X}^{(\omega_{\phi_{t}})}
h^{\prime}(\theta_{V}^{(\omega_{\phi_{t}})})V\!\left(\dot{\phi}_{t}\right)
\right\}\omega_{\phi_t}^{n}\\
&\qquad-\int_{M}
g_{\phi_{t}}\left(
\overline{\partial}\left(
\theta_{X}^{(\omega_{\phi_{t}})}h(\theta_{V}^{(\omega_{\phi_{t}})})\right),
\,\overline{\partial}\overline{\dot{\phi}_{t}}\right)
\omega_{\phi_t}^{n}\\
&=0,
\end{align*}
where $g_{\phi_{t}}$ is the fiberwise Hermitian inner product for
$(0,1)$-forms induced by $\omega_{\phi_{t}}$.
Hence,
$\int_{M}\theta_{X}^{(\omega_{\phi})}
h(\theta_{V}^{(\omega_{\phi})})\omega_{\phi}^{n}$ is
independent of the choice of $\phi\in\mathcal{H}_{V}$.
This completes the proof.
\end{proof}

We note that we can take $V=0$ provided the original Futaki invariant
vanishes.

We consider the case when $M$ is Fano and $\Omega=2\pi{c_{1}(M)}$.
For $\omega\in\Omega=2\pi{c_{1}(M)}$ and
$X\in\mathfrak{X}_{0}(M)=\mathfrak{X}(M)$, we define
$\widetilde{\theta}_{V}^{(\omega)}\in{C^{\infty}(M;\mathbb{C})}$ by
$$
i_{X}\omega=
\sqrt{-1}\,\overline{\partial}\widetilde{\theta}_{X}^{(\omega)}
\quad\text{and}\quad
\int_{M}\widetilde{\theta}_{X}^{(\omega)}e^{\rho_{\omega}}\omega^{n}=0,
$$
where $\rho_{\omega}$ is the Ricci potential for $\omega$.
Then we have
$\square_{\omega}\widetilde{\theta}_{X}^{(\omega)}
+X\left(\rho_{\omega}\right)
+\widetilde{\theta}_{X}^{(\omega)}=0$
(cf. Tian-Zhu \cite[(2.1)]{TZ02}), hence we obtain
$$
\int_{M}\widetilde{\theta}_{X}^{(\omega)}\omega^{n}
=-\int_{M}X\left(\rho_{\omega}\right)\omega^{n}
$$
and
$\widetilde{\theta}_{X}^{(\omega)}=\theta_{V}^{(\omega)}
-\mathrm{Vol}^{-1}\int_{M}X\left(\rho_{\omega}\right)\omega^{n}$.
In view of these equalities, we conclude that
{\allowdisplaybreaks
\begin{align*}
F_{V}^{h}(X)
&=\int_{M}X\left(\rho_{\omega}\right)\omega^{n}+
\int_{M}\theta_{X}^{(\omega)}h(\theta_{V}^{(\omega)})\omega^{n}\\
&=-\int_{M}\widetilde{\theta}_{X}^{(\omega)}\omega^{n}+
\int_{M}\widetilde{\theta}_{X}^{(\omega)}
h(\theta_{V}^{(\omega)})\omega^{n}\\
&=-\int_{M}\widetilde{\theta}_{X}^{(\omega)}
\left(1-e^{\rho_{\omega}}\right)\omega^{n}+
\int_{M}\widetilde{\theta}_{X}^{(\omega)}
h(\theta_{V}^{(\omega)})\omega^{n}\\
&=-\int_{M}\widetilde{\theta}_{X}^{(\omega)}
\left(1-e^{\rho_{\omega}}
-h(\theta_{V}^{(\omega)})\right)\omega^{n}\\
&=-\int_{M}{\theta}_{X}^{(\omega)}
\left(1-e^{\rho_{\omega}}
-h(\theta_{V}^{(\omega)})\right)\omega^{n}.
\end{align*}}\noindent
Therefore, we can see that this invariant $F_{V}^{h}(X)$ is the
derivative of the $h$-modified Ding functional $D_{V}^{h}$
along the ray generated by $X$.
Moreover, assuming $h(s)=1-e^{-\sigma(s)}$,
since
$\int_{M}\theta_{X}^{(\omega)}e^{\rho_{\omega}}\omega^{n}=
\int_{M}X\left(\rho_{\omega}\right)\omega^{n}$,
we can conclude
{\allowdisplaybreaks
\begin{align*}
F_{V}^{h}(X)
&=-\int_{M}\theta_{X}^{(\omega)}
\left(e^{-\sigma(\theta_{V}^{(\omega)})}
-e^{\rho_{\omega}}\right)\omega^{n}\\
&=\int_{M}\theta_{X}^{(\omega)}e^{\rho_{\omega}}\omega^{n}
-\int_{M}\theta_{X}^{(\omega)}
e^{-\sigma(\theta_{V}^{(\omega)})}\omega^{n}\\
&=\int_{M}X\left(\rho_{\omega}\right)\omega^{n}
-\int_{M}\theta_{X}^{(\omega)}
e^{-\sigma(\theta_{V}^{(\omega)})}\omega^{n}\\
&=-\int_{M}\widetilde{\theta}_{X}^{(\omega)}
e^{-\sigma(\theta_{V}^{(\omega)})}\omega^{n}\\
&=\int_{M}\left(
\square_{\omega}\widetilde{\theta}_{X}^{(\omega)}
+X\left(\rho_{\omega}\right)\right)
e^{-\sigma(\theta_{V}^{(\omega)})}\omega^{n}\\
&=\int_{M}X
\left(\rho_{\omega}+\sigma(\theta_{V}^{(\omega)})\right)
e^{-\sigma(\theta_{V}^{(\omega)})}\omega^{n}.
\end{align*}}\noindent
This is nothing but an obstruction for the existence of
multiplier Hermitian-Einstein metrics of type $\sigma$,
which is introduced by Futaki \cite{Fu02}.

\section{An $h$-modified extremal K\"{a}hler metric
and the properness of $K_{V}^{h}$}\label{exist/proper}
Based on the {\it existence\,/\,properness principle}
established by Darvas-Rubinstein \cite{DR17}
(see also Darvas's survey \cite{Dar19}),
we prove the first main result in this paper.
\begin{theorem}\label{main thm'}
There exists an $h$-modified extremal K\"{a}hler metric in
$\mathcal{H}_{V,0}$
if and only if
the $h$-modified Mabuchi functional $K_{V}^{h}$ is
$\mathrm{Aut}_{0}(M,V)$-invariant and coercive.
Moreover the condition of $\mathrm{Aut}_{0}(M,V)$-invariance can be
removed.
\end{theorem}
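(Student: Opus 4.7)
The plan is to invoke the abstract existence/properness principle of Darvas--Rubinstein applied to the quadruple $(\mathcal{H}_{V,0}, d_{1}, G, K_{V}^{h})$ with $G = \mathrm{Aut}_{0}(M,V)$. Under that principle, if $K_{V}^{h}$ is $G$-invariant, convex along finite energy geodesics, $d_{1}$-lower semicontinuous, and satisfies the standard compatibility with the metric completion $\mathcal{E}^{1}_{V,0}$, then coercivity is equivalent to the existence of a minimizer in $\mathcal{E}^{1}_{V,0}$. The theorem then follows once such a minimizer is promoted to a smooth $h$-modified extremal K\"{a}hler metric.

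First I would gather the inputs to the Darvas--Rubinstein machine from the preceding subsection. Convexity of $K_{V}^{h}$ along $C^{1,1}$-geodesics combines the Berman--Berndtsson convexity of the K-energy $K = H + \mathbb{J}_{-\mathrm{Ric}}$ with the affineness of $\mathbb{J}_{V}^{h}$ established in Lemma~\ref{Jaffine}(ii); extending to finite energy geodesics uses Berman--Darvas--Lu. The $d_{1}$-lsc extension of $K_{V}^{h}$ to $\mathcal{E}^{1}_{V,0}$ is obtained by adding the greatest $d_{1}$-lsc extension of $K$ (Berman--Darvas--Lu) to the $d_{1}$-continuous extension of $\mathbb{J}_{V}^{h}$ established in the preceding lemma. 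Since the $G$-action preserves $\mathcal{H}_{V,0}$ by $d_{1}$-isometries, the standing $G$-invariance hypothesis supplies the remaining input, and the first half of the statement follows.

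The central technical step, which I expect to be the main obstacle, is the regularity of a Darvas--Rubinstein minimizer $u \in \mathcal{E}^{1}_{V,0}$; this will be carried out in Section~\ref{Reg of mini} following Chen--Cheng and He. The strategy is a continuity method along a family of twisted equations interpolating between a reference setup and the target equation $S(\omega_{\phi}) - \underline{S} = h(\theta_{V}^{(\phi)})$, with openness treated by the implicit function theorem and closedness by the Chen--Cheng a priori estimates. The delicate point is to incorporate the new nonlinear lower order term $h(\theta_{V}^{(\phi)})$ into those fourth-order estimates, but since $\theta_{V}^{(\phi)}$ is uniformly bounded by Zhang's estimate and $h$ is smooth on $\overline{I}$, this term behaves as a bounded smooth perturbation and the Chen--Cheng--He estimates should go through with only routine modifications. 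Smoothness together with the Euler--Lagrange computation \eqref{dK} then identifies the minimizer as an $h$-modified extremal K\"{a}hler metric.

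To remove the $G$-invariance hypothesis, I would invoke the Futaki-type invariant of Proposition~\ref{prop:futaki-inv}. The derivative of $K_{V}^{h}$ along the one-parameter subgroup of $G$ generated by $X \in \mathrm{Lie}(G)$ is, up to sign, $F_{V}^{h}(X)$. In the ``only if'' direction, Proposition~\ref{prop:futaki-inv} gives $F_{V}^{h} \equiv 0$ once an $h$-modified extremal metric exists, so $K_{V}^{h}$ is automatically constant along $G$-orbits. Conversely, if $K_{V}^{h}$ is only assumed coercive, then $F_{V}^{h}$ must vanish on $\mathrm{Lie}(G)$, for otherwise a linear decrease of $K_{V}^{h}$ along the ray generated by a suitable $X$ would contradict coercivity with respect to $d_{1,G}$. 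In either case $G$-invariance is an automatic consequence of the remaining hypotheses and may be dropped from the statement.
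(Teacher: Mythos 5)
Your overall route coincides with the paper's: both apply the Darvas--Rubinstein existence\,/\,properness principle of \cite{Dar19} to the data $(\mathcal{H}_{V,0},d_{1},K_{V}^{h},G)$, both reduce the hard analysis to the regularity of finite-energy minimizers via the Chen--Cheng\,/\,He continuity method (with $h(\theta_{V}^{(\phi)})$ handled as a bounded perturbation thanks to the uniform bound on $\theta_{V}^{(\phi)}$), and both remove the $G$-invariance hypothesis by observing that the derivative of $K_{V}^{h}$ along a one-parameter subgroup of $G$ is a $t$-independent Futaki-type quantity, which must vanish once $K_{V}^{h}$ is bounded below.

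There is, however, one genuine omission. Your paraphrase of the Darvas--Rubinstein principle --- ``if $K_{V}^{h}$ is $G$-invariant, convex, $d_{1}$-lsc and compatible with the completion, then coercivity is equivalent to the existence of a minimizer'' --- leaves out two of its required hypotheses, and the more serious of the two is the uniqueness of minimizers modulo the $G$-action (property (P5) of \cite{Dar19}). Convexity, lower semicontinuity and regularity alone do not give the implication from existence to coercivity; one also needs that any two $h$-modified extremal K\"{a}hler metrics in $\mathcal{H}_{V}$ lie on a single $\mathrm{Aut}_{0}(M,V)$-orbit, as well as the compactness of $d_{1}$-bounded minimizing sequences (property (P2)). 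The paper supplies these as Theorem~\ref{BanMab} --- a Bando--Mabuchi type uniqueness statement proved by adapting Berman--Berndtsson's perturbation of the functional along $C^{1,1}$-geodesics, which is not a routine verification --- and Proposition~\ref{P2}. Your sketch would need both of these ingredients, the uniqueness theorem in particular, to close the ``only if'' direction.
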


\begin{proof}
We apply \cite[Theorem~4.7]{Dar19} to the data
$\mathcal{R}=\mathcal{H}_{V,0}$, $d=d_{1}$, $F=K_{V}^{h}$
and $G=\mathrm{Aut_{0}}(M,V)$.
It is enough to check the above data enjoy the properties (P1)--(P6)
in \cite[Section~4.2]{Dar19}.

\begin{description}
\item[(P1)] This is stated in the preliminary.
\item[(P2)] This is due to Proposition~\ref{P2} stated below.
\item[(P3)] 
This is due to Theorem~\ref{regularity} stated below.
\item[(P4)] This is due to \cite[Lemma~5.9]{DR17}.
\item[(P5)] This is due to Theorem~\ref{BanMab} stated below.
\item[(P6)] This is the cocycle condition for $K_{V}^{h}$.
\end{description}

We argue that the condition of $G$-invariance can be removed.
For $X\in\mathrm{Lie}(\mathrm{Aut}_{0}(M,V))$,
let $g_{t}$ be a one-parameter subgroup generated by the flow of the
real part
$X_{\mathrm{Re}}:=\frac{1}{2}\left(X+\overline{X}\right)$.
For any $V_{\mathrm{Im}}$-invariant metric $\omega$,
we write $g_{t}^{*}\omega=\omega_{0}+\deldel \phi_{t}$.
Note that $g_{t}^{*}\omega$ is also $V_{\mathrm{Im}}$-invariant.
Since
$$
\frac{d}{dt}g_{t}^{*}\omega |_{t=0}=L_{X_{\mathrm{Re}}}\omega
=\deldel\mathrm{Re}(\theta_{X}^{(\omega)})
$$
where $\theta_{X}^{(\omega)}$ is the complex potential of $X$ with
respect to $\omega$,
then we have
\begin{eqnarray*}
\frac{d}{dt}K_{V}^{h}(\phi_{t})
&=&
-\int_{M}g_{t}^{*}\Big[\frac{d\phi}{dt}|_{t=0}\Big(S(\omega)
-\underline{S}-h(\theta_{V}^{(\omega)}
\Big)\omega^{n}\Big]\\
&=&
-\int_{M}\mathrm{Re}(\theta_{X}^{(\omega)})\Big(S(\omega)
-\underline{S}-h(\theta_{V}^{(\omega)})\Big)\omega^{n}
\end{eqnarray*}
which is independent of $t$.
Thus if $K_{V}^{h}$ is coercive, in particular bounded from below, then
$K_{V}^{h}(\phi_{t})$ is constant in $t$
which states that $K_{V}^{h}$ is $G$-invariant.
This completes the proof.
\end{proof}

\subsection{Compactness properties for sequences}
We argue the property (P2) in the proof of Theorem \ref{main thm'}.
\begin{proposition}\label{P2}
If $\{\phi_{i}\}_{i}\subset\mathcal{E}^{1}_{V,0}$ satisfies
$\lim_{i\to\infty}K^{h}_{V}(\phi_{i})
=\inf_{\mathcal{E}^{1}_{V,0}}K_{V}^{h}$
and 
$d_{1}(0,\phi_{i})< C$ for any $i$ for some $C>0$
then there exists a minimizer $\phi_{*}$ of $K_{V}^{1}$ in
$\mathcal{E}^{1}_{V,0}$
and a subsequence $\{\phi_{i_{k}}\}_{k}$ which converges to $\phi_{*}$
in $d_{1}$-topology.
\end{proposition}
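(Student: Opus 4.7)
The plan is to reduce this to a standard compactness theorem for the K-energy. The main ingredients are already in place: by the earlier lemma, $\mathbb{J}_{V}^{h}$ extends $d_{1}$-continuously to $\mathcal{E}^{1}_{V}$ with a Lipschitz estimate (with constant depending only on the uniform bound of $\theta_{V}^{(\phi)}$); by Berman--Darvas--Lu, the K-energy $K=H+\mathbb{J}_{-\mathrm{Ric}}$ is the greatest $d_{1}$-lsc extension of itself and is convex along finite energy geodesics. Hence $K_{V}^{h}=K+\mathbb{J}_{V}^{h}$ is $d_{1}$-lsc on $\mathcal{E}^{1}_{V}$.

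First I would bound the entropy along the minimizing sequence. Since $d_{1}(0,\phi_{i})<C$ and both $\mathbb{J}_{-\mathrm{Ric}}$ and $\mathbb{J}_{V}^{h}$ are $d_{1}$-Lipschitz on $d_{1}$-bounded subsets of $\mathcal{E}^{1}_{V}$ (the first by the continuity of $\mathbb{J}_{\chi}$ for smooth $\chi$, the second by the lemma just recalled), the bound
\[
H(\phi_{i}) = K_{V}^{h}(\phi_{i}) - \mathbb{J}_{-\mathrm{Ric}}(\phi_{i}) - \mathbb{J}_{V}^{h}(\phi_{i})
\]
together with $K_{V}^{h}(\phi_{i})\to\inf K_{V}^{h}<+\infty$ gives a uniform upper bound on $H(\phi_{i})$. (If the infimum were $-\infty$, a separate argument using coercivity would be irrelevant here: the statement already presumes a minimizing sequence, so the infimum is finite because $K_{V}^{h}$ is bounded below by the Lipschitz bound and $d_{1}$-boundedness.)

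Second, I would invoke the compactness theorem of Berman--Darvas--Lu \cite{BDL20}: any sequence in $\mathcal{E}^{1}$ with $H(\phi_{i})$ and $d_{1}(0,\phi_{i})$ uniformly bounded admits a $d_{1}$-convergent subsequence with limit $\phi_{*}\in\mathcal{E}^{1}$. After passing to such a subsequence, I must verify $\phi_{*}\in\mathcal{E}^{1}_{V,0}$: the subspace $\mathcal{E}^{1}_{V}$ is $d_{1}$-closed since $V_{\mathrm{Im}}$-invariance passes to $d_{1}$-limits, and $E(\phi_{*})=0$ follows from the $d_{1}$-continuity of $E$ on $d_{1}$-bounded sets.

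Finally, using the $d_{1}$-lower semicontinuity of $K_{V}^{h}$ established above, we obtain
\[
K_{V}^{h}(\phi_{*}) \leq \liminf_{k\to\infty} K_{V}^{h}(\phi_{i_{k}}) = \inf_{\mathcal{E}^{1}_{V,0}} K_{V}^{h},
\]
so $\phi_{*}$ is a minimizer. The main obstacle is really the appeal to the Berman--Darvas--Lu compactness theorem; everything else (entropy control, invariance of the limit, lsc) reduces to Lipschitz/continuity statements already in hand. A minor technical point to be careful about is the distinction between $\mathcal{E}^{1}_{V,0}$ and $\mathcal{E}^{1}_{V}$ — the normalization $E(\phi_{i})=0$ is preserved in the limit by continuity of $E$, so no rescaling is needed.
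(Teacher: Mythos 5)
Your proof is correct and follows essentially the same route as the paper: bound the entropy via the decomposition $K_{V}^{h}=H+\mathbb{J}_{-\mathrm{Ric}}+\mathbb{J}_{V}^{h}$ and the $d_{1}$-control of the two $\mathbb{J}$-terms, extract a $d_{1}$-convergent subsequence by an entropy--distance compactness theorem, and conclude by $d_{1}$-lower semicontinuity of $K_{V}^{h}$. The only cosmetic difference is the reference for the compactness step (the paper invokes \cite[Theorem~5.6]{DR17}, together with the $|\sup_{M}\phi_{i}|$ bound from \cite[Lemma 4.4]{CC2}, rather than \cite{BDL20}), which does not change the argument.
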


\begin{proof}
Recall $K_{V}^{h}=H+\J_{-\mathrm{Ric}}+\J_{V}^{h}$.
Both functionals $\J_{-\mathrm{Ric}}$ and $\J_{V}^{h}$ are controlled by
the distance $d_{1}$
in general.
Thus $H(\phi_{i})$ is uniformly bounded.
The quantity $|\sup_{M}\phi_{i}|$ is also controlled by the distance
$d_{1}$
\cite[Lemma 4.4]{CC2}.
Then a compactness result in \cite[Theorem~5.6]{DR17} shows that
there exists $\phi_{*}\in\mathcal{E}^{1}_{V,0}$ and a subsequence
$\{\phi_{i_{k}}\}_{k}$
satisfying $d_{1}(\phi_{i_{k}},\phi_{*})\to 0$.
Since $K_{V}^{h}$ is $d_{1}$-lower semicontinuous, the limit $\phi_{*}$
is noting but a minimizer.
This completes the proof.
\end{proof}

The following stronger proposition than above one will be applied to
establish a geodesic stability in Section~\ref{gstability}.

\begin{proposition}\label{P2*}
If $\{\phi_{i}\}_{i}\subset\mathcal{E}^{1}_{V,0}$ satisfies
$K^{h}_{V}(\phi_{i})<C$ and 
$d_{1}(0,\phi_{i})< C$ for any $i$ for some $C>0$
then there exists $\phi\in\mathcal{H}^{1}_{V,0}$
and a subsequence $\{\phi_{i_{k}}\}_{k}$ which converges to $\phi$ in
$d_{1}$-topology.
\end{proposition}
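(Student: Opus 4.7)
The plan refines the proof of Proposition~\ref{P2}. The key new ingredient is a \emph{uniform entropy bound} on $\{\phi_i\}_i$: combined with the assumed $d_1$-distance bound, it will allow me to invoke a compactness theorem and place the $d_1$-limit in the finite-entropy class $\mathcal{H}^{1}_{V,0}$, rather than merely in $\mathcal{E}^{1}_{V,0}$.

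First I would decompose
$$
H(\phi_i) \;=\; K_V^{h}(\phi_i)\;-\;\mathbb{J}_{-\mathrm{Ric}}(\phi_i)\;-\;\mathbb{J}_V^{h}(\phi_i)
$$
and control each of the last two terms by $d_1(0,\phi_i)$. The estimate $|\mathbb{J}_V^{h}(\phi_i)|\leqq C\,d_1(0,\phi_i)$ is already proved in the excerpt (via Lemma~6.10 of \cite{AJL23} and the uniform bound $|\theta_V^{(\phi)}|<C$ from \cite{Zh00}). The analogous linear growth $|\mathbb{J}_{-\mathrm{Ric}}(\phi_i)|\leqq C(1+d_1(0,\phi_i))$ is a standard fact for Aubin--Yau-type functionals on $\mathcal{E}^{1}$. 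Together with the hypotheses $K_V^{h}(\phi_i)<C$ and $d_1(0,\phi_i)<C$, these yield a uniform bound $H(\phi_i)<C'$.

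Next, $|\sup_M\phi_i|$ is controlled by $d_1(0,\phi_i)$ via \cite[Lemma~4.4]{CC2}, hence is uniformly bounded. With these two a priori bounds in hand---entropy uniformly bounded and $d_1$-distance uniformly bounded---I would apply a standard compactness theorem for $\mathcal{E}^{1}$ under entropy and distance constraints (cf.~\cite{BDL20} or \cite[Theorem~5.6]{DR17}) to extract a subsequence $\phi_{i_k}$ converging in $d_1$-topology to some $\phi\in\mathcal{E}^{1}$. The $V_{\mathrm{Im}}$-invariance persists under $d_1$-limits since $\mathcal{E}^{1}_V$ is the $d_1$-completion of $\mathcal{H}_V$, and $E(\phi)=\lim_{k\to\infty}E(\phi_{i_k})=0$ by $d_1$-continuity of $E$ on $\mathcal{E}^{1}$, so $\phi\in\mathcal{E}^{1}_{V,0}$. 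Finally, the $d_1$-lower semicontinuity of the entropy \cite{BDL20} gives $H(\phi)\leqq\liminf_{k\to\infty}H(\phi_{i_k})<\infty$, which places $\phi$ in the finite-entropy subclass $\mathcal{H}^{1}_{V,0}$.

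The main subtlety is the uniform $d_1$-linear control of $\mathbb{J}_{-\mathrm{Ric}}$ and $\mathbb{J}_V^{h}$ with constants independent of $i$; both ultimately rest on the uniform pointwise bound on $\theta_V^{(\phi)}$ and on standard comparison estimates. Once the entropy bound is secured, the remainder is abstract compactness in $\mathcal{E}^{1}$ together with lower semicontinuity of the entropy, which upgrades the limit from $\mathcal{E}^{1}_{V,0}$ to $\mathcal{H}^{1}_{V,0}$---the stronger conclusion that distinguishes Proposition~\ref{P2*} from Proposition~\ref{P2}.
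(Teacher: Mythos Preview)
Your proof is correct and follows essentially the same route as the paper: bound the entropy via the decomposition $K_V^{h}=H+\mathbb{J}_{-\mathrm{Ric}}+\mathbb{J}_V^{h}$ together with the $d_1$-linear control of the last two terms, then apply the compactness result \cite[Theorem~5.6]{DR17} exactly as in Proposition~\ref{P2}. Note that the target space ``$\mathcal{H}^{1}_{V,0}$'' in the statement is nowhere defined in the paper and is almost certainly a typo for $\mathcal{E}^{1}_{V,0}$, so your extra step using lower semicontinuity of the entropy to place the limit in a finite-entropy class, while correct, is not actually required.
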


\begin{proof}
By the same argument as above, we can use a compactness result in
\cite[Theorem~5.6]{DR17}
to conclude the proposition.
\end{proof}

\subsection{Regularity of minimizers}\label{Reg of mini}
We argue the property (P3) in the proof of Theorem \ref{main thm'}.
\subsubsection{A continuity path}
Let $T$ be the compact torus in $G:=\mathrm{Aut}_{0}(M,V)$
defined by the closure of the set of flows generated by $\mathrm{Im}V$.
For a $T$-invariant K\"{a}hler metrics $\omega$ and $\alpha$, 
we call $\omega$ a $\alpha$-twisted $h$-modified extremal metric if 
$S(\omega)-h(\theta_{V}^{(\omega)})-\Lambda_{\omega}\alpha=\text{const}$.
The following theorem
which generalizes Hashimoto's result \cite{Ha19},
gives an openness property for a continuity path used to prove the
regularity of minimizers.
(The following argument is essentially
due to He \cite[pp.\ 10--12]{He19pre},
where \cite{He19pre} is a preprint version of \cite{He19}.)

\begin{theorem}\label{Hashimoto}
Let $\omega$ and $\alpha$ be $T$-invariant K\"{a}hler metrics satisfying
$\mathrm{tr}_{\omega}\alpha=\text{const}$.
Then there exists a constant $r(\omega, \alpha)\in\mathbb{R}$ depending
only on $\omega$ and $\alpha$
such that for any $r\geqq{r(\omega,\alpha)}$, there exists
$\phi\in\mathcal{H}_{V}$ satisfying
$S(\phi)-h(\theta^{(\phi)}_{V})
-\Lambda_{\omega_{\phi}}(r\alpha)=\text{const}$.
\end{theorem}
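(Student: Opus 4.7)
The plan is to use a continuity argument parametrized by $t := 1/r$, degenerating the equation at $t=0$ to $\Lambda_{\omega_\phi}\alpha=\text{const}$, which by hypothesis is solved by $\phi=0$; the desired existence for large $r$ (i.e.\ small $t$) then follows from the implicit function theorem. More precisely, define
\[
F(\phi,t):=t\bigl(S(\omega_\phi)-\underline{S}-h(\theta_{V}^{(\phi)})\bigr)-\bigl(\Lambda_{\omega_\phi}\alpha-\underline{\Lambda}\bigr),
\]
where $\underline{\Lambda}:=n\int_M\alpha\wedge\omega_0^{n-1}/\mathrm{Vol}$ is the topological average. Each of $S(\omega_\phi)-\underline{S}$, $h(\theta_V^{(\phi)})$ and $\Lambda_{\omega_\phi}\alpha-\underline{\Lambda}$ has vanishing integral against $\omega_\phi^n$, so the equation $F(\phi,t)=0$ (modulo constants) is equivalent to $S(\omega_\phi)-h(\theta_V^{(\phi)})-r\Lambda_{\omega_\phi}\alpha=\text{const}$ after multiplying through by $r$. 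By the hypothesis $\mathrm{tr}_\omega\alpha=\underline{\Lambda}$, we have $F(0,0)=0$.

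Next, I would linearize at $(\phi,t)=(0,0)$. The factor of $t$ kills the contributions from $S$ and $h(\theta_V^{(\phi)})$, leaving
\[
D_\phi F\bigl|_{(0,0)}[\psi]
=-\partial_\phi(\Lambda_{\omega_\phi}\alpha)\bigl|_{\phi=0}[\psi]
=g^{\overline{j}k}g^{\overline{l}i}\psi_{k\overline{l}}\alpha_{i\overline{j}}
=:L_\alpha\psi.
\]
The leading symbol is determined by the positive-definite hermitian matrix $\alpha^{\overline{l}k}:=g^{\overline{l}i}\alpha_{i\overline{j}}g^{\overline{j}k}$, so $L_\alpha$ is a self-adjoint second-order elliptic operator whose kernel on smooth functions consists exactly of constants. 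Therefore, working on $T$-invariant Hölder classes $C^{k,\beta}_T(M)$ modulo constants, the map $L_\alpha\colon C^{4,\beta}_{T,0}\to C^{2,\beta}_{T,0}$ (mean-zero with respect to the fixed measure $\omega^n$) is an isomorphism.

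With this in hand I would apply the implicit function theorem to the map $\widetilde{F}\colon C^{4,\beta}_{T,0}\times(-\delta,\delta)\to C^{2,\beta}_{T,0}$ obtained from $F$ by subtracting the (smoothly varying, vanishing at the base point) constant required to make the image $\omega^n$-mean-zero. One obtains a smooth curve $t\mapsto\phi(t)$ with $\phi(0)=0$ solving $F(\phi(t),t)=0$ for $t\in[0,s_0)$; setting $r(\omega,\alpha):=1/s_0$ (after shrinking $s_0$ slightly for the non-strict inequality) yields a solution for every $r\geqq r(\omega,\alpha)$. Since $T$ is generated by the flow of $\mathrm{Im}\,V$, $T$-invariance of $\phi(t)$ gives $\phi(t)\in\mathcal{H}_V$; the smallness of $\phi(t)$ in $C^{4,\beta}$ ensures $\omega_{\phi(t)}>0$; and elliptic bootstrap on the fourth-order equation upgrades $\phi(t)$ to a smooth Kähler potential.

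The main obstacle I expect is the functional-analytic setup: the natural mean-zero condition on $F$ is with respect to the \emph{varying} measure $\omega_\phi^n$, whereas the implicit function theorem is cleanest on Banach spaces built from the fixed reference measure $\omega^n$. This discrepancy is absorbed into a constant depending smoothly on $\phi$ and vanishing at $\phi=0$, so it does not affect the invertibility of the linearization; verifying this carefully, together with the fact that $L_\alpha$ is genuinely Fredholm of index zero on the $T$-invariant mean-zero Hölder spaces, is the one piece of the argument that requires attention.
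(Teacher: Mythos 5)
There is a genuine gap at the heart of your argument: the substitution $t=1/r$ turns the problem into a \emph{singular} perturbation, not a regular one, and the implicit function theorem cannot be applied at $(\phi,t)=(0,0)$ in the way you describe. The term $t\bigl(S(\omega_\phi)-\underline{S}-h(\theta_V^{(\phi)})\bigr)$ is a fourth-order nonlinear operator in $\phi$, so for the map $F(\cdot,t)$ to be differentiable you must work with (say) $F\colon C^{4,\beta}_{T,0}\times(-\delta,\delta)\to C^{0,\beta}_{T,0}$; in particular your claimed target $C^{2,\beta}_{T,0}$ is not correct, since $S(\omega_\phi)$ for $\phi\in C^{4,\beta}$ is only $C^{0,\beta}$. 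But then the linearization at the base point, which as you note is only the second-order operator $L_\alpha$ (the fourth-order part is killed by the factor $t=0$), maps $C^{4,\beta}_{T,0}$ into $C^{2,\beta}_{T,0}\subsetneq C^{0,\beta}_{T,0}$ and is therefore injective with dense but proper image --- not surjective onto the target. Raising the regularity of the domain does not help: at every level the order of the equation drops from four to two at $t=0$, so there is no single pair of Banach spaces on which the map is $C^1$ \emph{and} its $\phi$-derivative at $(0,0)$ is an isomorphism. This is exactly the obstruction that makes the theorem nontrivial.

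The paper's proof (following Hashimoto) is designed to get around this. Instead of linearizing at $r=\infty$, one works at each large finite $r$, where the linearization $-\mathcal{D}^*_{\omega}\mathcal{D}_{\omega}+rF_{\omega,\alpha}$ is an honest fourth-order elliptic operator and is invertible on mean-zero $T$-invariant functions; the price is that the quantitative inverse function theorem then requires the starting point to solve the equation up to an error that is small compared to negative powers of $r$. This is supplied by the iterative construction in Lemma~\ref{appro}, which produces $\omega_m=\omega+\deldel(r^{-1}\phi_1+\cdots+r^{-m}\phi_m)$ solving the equation up to $O(r^{-m})$, together with the correction $\alpha_m=\alpha+r^{-m-1}\deldel h_{m,r}$ making $\omega_m$ an exact $r\alpha_m$-twisted solution, after which the enlarged map $T_r(\eta,\phi)$ is inverted near $(0,0)$ and the twist is moved back from $\alpha_m$ to $\alpha$ using $\alpha_m-\alpha=O(r^{-m-1})$. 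Your computation of $L_\alpha$ and of its kernel is consistent with the operator $F_{\omega,\alpha}$ appearing in the paper (the extra first-order term $(\partial\Lambda_\omega\alpha,\dbar\psi)_\omega$ vanishes at the base point since $\Lambda_\omega\alpha$ is constant), so the ingredients you identify are the right ones; what is missing is the entire approximation-plus-quantitative-IFT mechanism that replaces the inadmissible linearization at $t=0$.
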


\begin{proof}
At first, we take $T$-invariant K\"{a}hler metrics $\omega$ and $\alpha$
such that $\Lambda_{\omega}\alpha$ is not necessarily constant.
Set
$\tilde{S}(\omega)=
S(\omega)-h(\theta_{V}^{(\omega)})-\Lambda_{\omega}\alpha$.
Let $\psi$ be a variation of the potential function for $\omega$, 
that is, $\delta\omega=\deldel\psi$.
We follow Hashimoto's computation \cite{Ha19} to obtain
the linearization of the LHS of the above equation as follows.
\begin{eqnarray*}
\mathcal{L}(\psi)
&=&-\mathcal{D}^{*}_{\omega}\mathcal{D}_{\omega}\psi
+(\partial\tilde{S}(\omega),\dbar\psi)_{\omega}
+(\alpha,\deldel\psi)_{\omega}
+(\partial\Lambda_{\omega}\alpha,\dbar\psi)_{\omega}\\
&=&-\mathcal{D}^{*}_{\omega}\mathcal{D}_{\omega}\psi
+(\partial\tilde{S}(\omega),\dbar\psi)_{\omega}
+F_{\omega,\alpha}(\psi)
\end{eqnarray*}
where
$(\cdot,\cdot)_{\omega}$ is the point wise inner product on the space of
differential forms
defined by $\omega$,
$\mathcal{D}_{\omega}:C^{\infty}(M;\mathbb{R})\to
C^{\infty}(T^{1,0}M\otimes\Omega^{1,0}(M))$
is an operator defined by
$\mathcal{D}_{\omega}\psi=\dbar(\mathrm{grad}^{1,0}_{\omega}\psi)$
and $\mathcal{D}_{\omega}^{*}$ is its formal adjoint with respect to the
inner product.
Here we put
$F_{\omega,\alpha}(\psi):=(\alpha,\deldel\psi)_{\omega}
+(\partial\Lambda_{\omega}\alpha,\dbar\psi)_{\omega}$.
The kernel of $\mathcal{D}^{*}_{\omega}\mathcal{D}_{\omega}$ 
which is equal to the kernel of $\mathcal{D}_{\omega}$,
is the set of functions whose complex gradient is holomorphic
(see for example \cite[Section 4]{Szbook}).
The operator $F_{\omega,\alpha}$ is complex self-adjoint and second
order elliptic
and its kernel is the set of constant functions (\cite[Lemma~1]{Ha19}).
To apply the inverse function theorem we first observe
the following lemma:

%

\begin{lemma}\label{appro}
Let $\omega$ and $\alpha$ be $T$-invariant K\"{a}hler metrics satisfying 
$\Lambda_{\omega}\alpha=\text{const}$.
Then, for each $m\in\mathbb{N}$ there exists
$\phi_{1},\dots,\phi_{m}\in{C^{\infty}(M;\mathbb{R})}$
such that
$$\omega_{m}:=\omega+\deldel(r^{-1}\phi_{1}+\cdots+r^{-m}\phi_{m})$$
satisfies
$$
S(\omega_{m})-h(\theta^{(\omega_{m})}_{V})
-r\Lambda_{\omega_{m}}\alpha=
\text{const}+r^{-m}f_{m,r}$$
for a function $f_{m,r}$ with average $0$ with respect to $\omega_{m}$
which is bounded in $C^{\infty}(M;\mathbb{R})$ for any sufficiently
large $r$.
\end{lemma}

\begin{proof}[Proof of Lemma~\ref{appro}]
The proof is done by the induction.

By \cite[Lemma~2]{Ha19}, there exists a $T$-invariant function
$\phi_{1}\in C^{\infty}(M;\mathbb{R})$
such that
$-(S(\omega)-\underbar{S}-h(\theta^{(\omega)}_{V}))
=F_{\omega,\alpha}(\phi_{1})$,
that is,
$$
(S(\omega)-\underbar{S}-h(\theta^{(\omega)}_{V}))
+(\alpha, \deldel\phi_{1})_{\omega}=0.
$$
Since $F_{\omega,\alpha}$ is an elliptic operator, the uniform bounds of
derivatives of $\phi_{1}$
is guaranteed.
For large $r>0$, consider the K\"{a}hler metric
$\omega_{1}:=\omega+r^{-1}\deldel\phi_{1}$.
Note that
$\theta^{(\omega_{1})}_{V}=\theta^{(\omega)}_{V}+V(r^{-1}\phi_{1})$.
The Taylor expansion of the function $h$ and the uniform bound of
 $|\theta^{(\omega_{1})}_{V}|$ show
$h(\theta^{(\omega_{1})}_{V})
=h(\theta^{(\omega)}_{V})+\mathcal{O}(r^{-1})$.
Together with a computation in \cite[section~3.1]{Ha19}, we have
\begin{eqnarray*}
S(\omega_{1})-h(\theta^{(\omega_{1})}_{V})-r\Lambda_{\omega_{1}}\alpha
&=&
\text{const} + (S(\omega)-\underbar{S}-h(\theta^{(\omega)}_{V}))
+(\alpha,\deldel\phi_{1})_{\omega}+\mathcal{O}(r^{-1}) \\
&=& \text{const}+\mathcal{O}(r^{-1}).
\end{eqnarray*}
Therefore there exists a uniformly bounded smooth function $f_{1,r}$
such that
$$
S(\omega_{1})-h(\theta^{(\omega_{1})}_{V})-r\Lambda_{\omega_{1}}\alpha 
=\text{const}+r^{-1}f_{1,r}
\quad\text{and}\quad
\int_{M}f_{1,r}\omega_{1}^{n}=0.
$$

Assume we have functions $\phi_{1},\dots,\phi_{m-1}$ such that the
 statement in the lemma holds.
There exists a $T$-invariant function $\phi_{m}$ satisfying
$$
(S(\omega_{m-1})-\underbar{S}-h(\theta_{V}^{(\omega_{m-1})})
-r\Lambda_{\omega_{m-1}}\alpha+rc)
+r^{1-m}(\alpha,\deldel\phi_{m})_{\omega_{m-1}}=0
$$
where $c$ is the average of $\Lambda_{\omega_{m-1}}\alpha$ with respect
to $\omega_{m-1}$.
By the same argument as above, for large $r>0$, 
consider the K\"{a}hler metric
$\omega_{m}=\omega_{m-1}+r^{-m}\deldel\phi_{m}$
to compute
\begin{eqnarray*}
S(\omega_{m})-h(\theta^{(\omega_{m})}_{V})-r\Lambda_{\omega_{m}}\alpha
&=&
\text{const}
+(S(\omega_{m-1})-\underbar{S}-h(\theta_{V}^{(\omega_{m-1})})
-r\Lambda_{\omega_{m-1}}\alpha+rc)\\
&+&r^{1-m}(\alpha,\deldel\phi_{m})_{\omega_{m-1}}
+\mathcal{O}(r^{-m}) \\
&=&\text{const}+\mathcal{O}(r^{-m}).
\end{eqnarray*}
Therefore there exists a uniformly bounded smooth function $f_{m,r}$
such that
$$
S(\omega_{m})-h(\theta^{(\omega_{m})}_{V})-r\Lambda_{\omega_{m}}\alpha 
=\text{const}+r^{-1}f_{m,r}
\quad\text{and}\quad
\int_{M}f_{m,r}\omega_{m}^{n}=0.
$$
This completes the proof of Lemma~\ref{appro}.
\end{proof}

Take a smooth function $h_{m,r}$ satisfying
$\square_{\omega_{m}}h_{m,r}=f_{m,r}$ and
$\int_{M}h_{m,r}\omega_{m}^{n}=0$.
Since $\omega_{m}=\omega+\mathcal{O}(r^{-1})$, the elliptic theory gives
the uniform bound of $h_{m,r}$.
Define $\alpha_{m}=\alpha+r^{-m-1}\deldel h_{m,r}$ to satisfy
$$
S(\omega_{m})-h(\theta^{\omega_{m}}_{V})-r\Lambda\omega_{m}\alpha_{m}
=\text{const}.
$$

In order to apply the inverse function theorem for Banach spaces,
let $W^{k,2}_{0,T}$ be the $T$-invariant Sobolev space on $M$
such that the average of each function is $0$ with respect to $\omega$
and let $\Omega_{T}^{1,1}$ be the set of $T$-invariant real
$(1,1)$-forms on $M$
completed by the $W^{k,2}$-Sobolev norm.
Here $k$ is sufficiently large and all Sobolev norms are defined by
$\omega$.
Consider Banach spaces $B_{1}:=\Omega_{T}^{1,1}\times W^{k+4,2}_{0,T}$
and $B_{2}:=\Omega_{T}^{1,1}\times W^{k,2}_{0,T}$
and take an open set
$U:=\Set{(\eta,\phi)\in{B_{1}}|\omega_{m}+\deldel\phi>0}$.
For $r>0$ we define a map $T_{r}:U\to B_{2}$ as follows.
$$
T_{r}(\eta,\phi)
=(\alpha_{m}+\eta, 
S(\omega_{m,\phi})-h(\theta^{\omega_{m,\phi}}_{V})
-r\Lambda_{\omega_{m,\phi}}(\alpha_{m}+\eta))
$$
where $\omega_{m,\phi}=\omega_{m}+\deldel\phi$.
The point $0\in U$ corresponds to 
the $r\alpha_{m}$-twisted $h$-modified extremal metric $\omega_{m}$.
Note that $T_{r}(0,0)=(\alpha_{m}, \underbar{S}-rc)$
where $c$ is the average of $\Lambda_{\omega_{m}}\alpha_{m}$ which is a
topological constant.
We need to find $(\eta,\phi)\in U$ satisfying
$T_{r}(\eta,\phi)=(\alpha,\underbar{S}-rc)$
by applying the inverse function theorem.
We write $\Lambda_{m}$ for $\Lambda_{\omega_{m}}$,
$\mathcal{D}_{m}^{*}\mathcal{D}_{m}$ for 
$\mathcal{D}_{\omega_{m,}}^{*}\mathcal{D}_{\omega_{m}}$ and 
$F_{m}$ for $F_{\omega_{m},\alpha_{m}}$.
The linearization of $T_{r}$ at $0\in U$ is given by
$$
DT_{r}|_{0}(\tilde{\eta},\tilde{\phi})=(\tilde{\eta},\tilde{\phi})
\begin{pmatrix}
1&-r\Lambda_{m}\\
0& -\mathcal{D}^{*}_{m}\mathcal{D}_{m}+rF_{m}
\end{pmatrix}.
$$
This linearized operator is exactly same as Hashimoto's one for a
twisted CSCK metric \cite{Ha19}.
Therefore the property $\alpha_{m}=\alpha+\mathcal{O}(r_{-m-1})$ 
and an inverse function theorem argument by
Hashimoto \cite[section3.2]{Ha19} show theorem~\ref{Hashimoto}.
\end{proof}

Similar to \cite[Corollary 1]{Ha19}, we have the following which shows
the openness
property for a continuity path used to prove the regularity of
minimizer.

\begin{corollary}\label{openness}
Let $\omega$ is an $\alpha$-twisted $h$-modified extremal metric such
that $\omega$ and $\alpha$ are
$T$-invariant K\"{a}hler metrics.
If $\tilde{\alpha}$ is a $T$-invariant K\"{a}hler metric which is close
to
$\alpha$ in the $C^{\infty}$-norm,
then there exists an $\tilde{\alpha}$-twisted $h$-modified extremal
metric in $[\omega]$ which is $T$-invariant.
\end{corollary}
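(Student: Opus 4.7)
The plan is to apply the inverse function theorem at the exact solution $\omega$ directly, re-using the Banach-space framework of the proof of Theorem~\ref{Hashimoto} but dropping the approximation step Lemma~\ref{appro}: that step was needed there only because the starting metric satisfied merely $\Lambda_\omega\alpha=\mathrm{const}$ rather than the full twisted extremal equation, whereas here by hypothesis $\omega$ already solves $S(\omega)-h(\theta_V^{(\omega)})-\Lambda_\omega\alpha=\mathrm{const}$, so no preliminary perturbation is needed.

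Concretely, I would keep the Banach spaces $B_1=\Omega^{1,1}_T\times W^{k+4,2}_{0,T}$, $B_2=\Omega^{1,1}_T\times W^{k,2}_{0,T}$ and the open set $U=\{(\eta,\phi)\in B_1\mid\omega+\deldel\phi>0\}$, and define
$$\mathcal{T}(\eta,\phi):=\Big(\alpha+\eta,\;S(\omega_\phi)-h(\theta_V^{(\omega_\phi)})-\Lambda_{\omega_\phi}(\alpha+\eta)-c(\eta,\phi)\Big),$$
where $c(\eta,\phi)$ is the $\omega_\phi$-average making the second slot mean-zero, so that $\mathcal{T}(0,0)=(\alpha,0)$. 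Finding an $\tilde\alpha$-twisted $h$-modified extremal metric in $[\omega]$ is equivalent to solving $\mathcal{T}(\tilde\alpha-\alpha,\phi)=(\tilde\alpha,0)$, which by the inverse function theorem reduces to showing that $D\mathcal{T}|_{(0,0)}\colon B_1\to B_2$ is an isomorphism. This linearisation is block lower-triangular with diagonal blocks $\mathrm{id}_{\Omega^{1,1}_T}$ and a fourth-order elliptic operator $L$ whose principal part is $-\mathcal{D}_\omega^{*}\mathcal{D}_\omega+F_{\omega,\alpha}$; the potentially troublesome first-order piece involving $\partial\tilde S(\omega)$ vanishes precisely because $\tilde S(\omega)$ is constant at the solution.

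The main obstacle I anticipate is the invertibility of $L$ on $T$-invariant mean-zero functions, but this will follow from the familiar signs: $\mathcal{D}_\omega^{*}\mathcal{D}_\omega$ is non-negative self-adjoint, $-F_{\omega,\alpha}$ is positive with kernel the constants (because $\alpha>0$), and the extra first-order correction coming from the $h(\theta_V^{(\omega_\phi)})$ term is a compact perturbation of these principal contributions and hence does not alter invertibility on the mean-zero $T$-invariant sector. Granted this, the implicit function theorem produces, for every $\tilde\alpha$ in a $C^{\infty}$-small neighbourhood of $\alpha$, some $\phi\in W^{k+4,2}_{0,T}$ with $\omega_\phi$ a $\tilde\alpha$-twisted $h$-modified extremal metric; $T$-invariance is built into the function spaces, and elliptic bootstrapping applied to the resulting non-linear fourth-order PDE upgrades $\phi$ to $C^{\infty}(M;\mathbb{R})$.
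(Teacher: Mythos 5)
Your proposal is correct and is essentially the paper's own argument: the paper gives no separate proof of Corollary~\ref{openness}, but simply invokes Hashimoto's Corollary~1, i.e.\ the inverse function theorem applied at the exact solution within the Banach-space framework ($B_{1}$, $B_{2}$, the map $T_{r}$ and its triangular linearization) already set up in the proof of Theorem~\ref{Hashimoto}, which is precisely what you do, with the approximation step of Lemma~\ref{appro} rightly discarded since an exact solution is given. One caution: your fallback claim that the first-order term coming from $h(\theta_{V}^{(\omega_{\phi})})$ is ``a compact perturbation and hence does not alter invertibility'' is not a valid principle (compact perturbations preserve the Fredholm index, not injectivity), but it is also unnecessary here, because that term is exactly the contribution of $-h(\theta_{V}^{(\omega)})$ to $(\partial\tilde{S}(\omega),\dbar\psi)_{\omega}$ and therefore vanishes at the solution, so the linearization is precisely $-\mathcal{D}_{\omega}^{*}\mathcal{D}_{\omega}+F_{\omega,\alpha}$ and its invertibility on mean-zero $T$-invariant functions follows from the self-adjointness and kernel computations already quoted in the paper.
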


\subsubsection{Proof of the regularity}

\begin{lemma}\label{existence of h}
Suppose there exists a minimizer $\phi_{*}$ of $K_{V}^{h}$ in
$\mathcal{E}^{1}_{V}$. 
Then there exists a smooth $h$-modified extremal K\"{a}hler metric in
$\mathcal{H}_{V}$.
\end{lemma}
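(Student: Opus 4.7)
The plan is to establish regularity by adapting the continuity-method scheme of Chen-Cheng \cite{CC2} and He \cite{He19} to our $h$-modified setting. Fix a $T$-invariant K\"ahler metric $\omega\in\Omega$ and a $T$-invariant K\"ahler metric $\alpha$ with $\mathrm{tr}_{\omega}\alpha$ constant, and consider the continuity path
\begin{equation*}
S(\omega_{\phi_{t}}) - h(\theta_{V}^{(\phi_{t})}) - t\,\Lambda_{\omega_{\phi_{t}}}\alpha = \underline{S} - t\,\underline{\Lambda\alpha}, \qquad \phi_{t}\in\mathcal{H}_{V},\ t\in[0,t_{0}],
\end{equation*}
with $t_{0}\geqq r(\omega,\alpha)$. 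Theorem~\ref{Hashimoto} supplies a smooth $T$-invariant solution at $t=t_{0}$, and Corollary~\ref{openness} gives openness of the set of $t$'s at which a smooth $T$-invariant solution exists. The whole issue is closedness, which I would obtain from uniform a priori estimates driven by the existence of the minimizer $\phi_{*}$.

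For the uniform estimates, I would proceed in two stages. In the first stage, each smooth solution $\phi_{t}$ is a critical point of the twisted functional $K_{V}^{h}+t\,\mathbb{J}_{\alpha}$, and convexity along $C^{1,1}$-geodesics (Lemma~\ref{Jaffine} together with \cite{BB17}) identifies $\phi_{t}$ as its minimizer in $\mathcal{E}^{1}_{V,0}$; comparing with $\phi_{*}$ and using that $\mathbb{J}_{\alpha}$ is controlled by $d_{1}$ then gives $K_{V}^{h}(\phi_{t})\leqq K_{V}^{h}(\phi_{*})+t\,\bigl(\mathbb{J}_{\alpha}(\phi_{*})-\mathbb{J}_{\alpha}(\phi_{t})\bigr)$, which, combined with the $d_{1}$-control of entropy of \cite[Lemma~4.4]{CC2}, bootstraps to a uniform bound on $d_{1}(0,\phi_{t})$ and hence on $H(\phi_{t})$. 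In the second stage, since $|\theta_{V}^{(\phi_{t})}|$ is uniformly bounded by \cite{Zh00} so that $h(\theta_{V}^{(\phi_{t})})$ is uniformly bounded, the right-hand side of the twisted scalar curvature equation is uniformly bounded; the Chen-Cheng a priori estimates \cite{CC2} then produce uniform $C^{0}$, $W^{2,p}$, $C^{1,1}$ and higher-order Schauder bounds on $\phi_{t}$. Arzel\`a-Ascoli closes the continuity path, and the smooth solution obtained at $t=0$ is by definition an $h$-modified extremal K\"ahler metric in $\mathcal{H}_{V}$.

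The main obstacle is verifying that the Chen-Cheng estimates go through with the extra nonlinear term $h(\theta_{V}^{(\phi)})$, which depends on first derivatives of $\phi$ through $\theta_{V}^{(\phi)}$ and is therefore not a purely lower-order perturbation of the scalar curvature equation. The essential point is that $\theta_{V}^{(\phi)}$ is controlled in $C^{0}$ independently of $\phi$ by \cite{Zh00}, so that $h(\theta_{V}^{(\phi)})$ may be treated as a uniformly bounded source term at each step of the Chen-Cheng iteration; this is precisely the mechanism that makes the weighted cscK regularity theorems of \cite{AJL23, Lah19, Lah23} go through, and I would appeal to those references for the technical verifications. With those in place the scheme closes as above and yields the desired smooth $h$-modified extremal K\"ahler metric.
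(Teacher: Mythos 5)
Your argument is correct and follows essentially the same route as the paper: the paper's continuity path $t\bigl(S(\phi)-\underline{S}-h(\theta_{V}^{(\phi)})\bigr)-(1-t)\bigl(\Lambda_{\omega_{\phi}}\omega_{0}-n\bigr)=0$ is, after dividing by $t$, exactly your path with $\alpha=\omega_{0}$ and twist parameter $(1-t)/t$, and the paper likewise combines Theorem~\ref{Hashimoto} and Corollary~\ref{openness} for openness with the minimizing property of the solutions, the comparison against $\phi_{*}$ yielding $\J(\phi_{t})\leqq\J(\phi_{*})$ and hence uniform $d_{1}$ and entropy bounds, and the Chen--Cheng a priori estimates applied with $h(\theta_{V}^{(\phi)})$ treated as a uniformly bounded source term. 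The only cosmetic difference is that you extract the uniform bound in one step for all $t$, whereas the paper first proves a $t$-dependent bound for closedness in the interior of the path and then a separate uniform bound, via $\phi_{*}$, for the endpoint limit.
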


\begin{proof}
We follow the argument in \cite[Lemma 5.1]{CC2}.
Consider the continuity path
\begin{equation}\label{eq-t}
t(S(\phi)-\underbar{S}-h(\theta^{\phi}_{V}))
-(1-t)(\Lambda_{\omega_{\phi}}\omega_{0}-n)=0
\end{equation}
where $\omega_{0}$ and $\omega_{\phi}$ are $T$-invariant
and $t\in [0,1]$.
Theorem~\ref{Hashimoto} and Corollary~\ref{openness} show
the set of $t$ such that a solution of \eqref{eq-t} exists
is open in $[0,1]$.
A solution $\phi(t)$ of the equation \eqref{eq-t} is a minimizer of the
functional
$$
\tilde{K}_{t}:=tK^{h}_{V}+(1-t)\J.
$$
This is because $K^{h}_{V}$ is convex and $\J$ is affine along $C^{1,1}$
geodesic segments in $\mathcal{H}_{V}$.

By the assumption, $K_{V}^{h}$ is bounded from below.
The inequality \eqref{IJ} and an estimate in
\cite[Proposition~5.5]{DR17} shows
\begin{equation}\label{Jproper}
\J(\phi)\geqq{C^{-1}d_{1}(0,\phi)-C}
\end{equation}
for any $\phi\in\mathcal{H}_{0}.$
Thus $\tilde{K}_{t}(\phi)\geqq{C^{-1}(1-t)d_{1}(0,\phi)-C}$
for any $\phi\in\mathcal{H}_{0}$.
Let $\phi(t)\in\mathcal{H}_{V,0}$ be a solution of \eqref{eq-t}.
Since $\phi(t)$ is a minimizer of $\tilde{K}_{t}$, we have
$d_{1}(0,\phi(t))<C((1-t)^{-1}+1)$.
Since $|\J_{-\mathrm{Ric}}(\phi)|$ and $|\J^{h}_{V}(\phi)|$ are
bounded by $Cd_{1}(0,\phi)$ in general where $C>0$ is a uniform constant,
by definition of $\tilde{K}_{t}$,  we have an inequality
$$
H(\phi(t))=
\vol^{-1}\int_{M}\log
\Big(\frac{\omega_{\phi(t)}^{n}}{\omega_{0}^{n}}\Big)
\omega_{\phi(t)}^{n}\leq Cd_{1}(0,\phi(t))+C<C((1-t)^{-1}+1)
$$
for each $t\in(0,1)$.
A priori estimate by Chen-Cheng \cite[Theorem~1.2]{CC1} shows the
equation \eqref{eq-t} has a smooth solution for any $t\in (0,1)$.

Take a sequence $t_{i}\to 1$, and take the solution $\phi_{i}$ of the
equation \eqref{eq-t} for $t=t_{i}$ with
the normalization $E(\phi_{i})=0$.
An argument in \cite[Corollary 4.5]{CC2} shows $\phi_{i}$ minimizes
$t_{i}K_{V}^{h}+(1-t_{i}) \J$
on the completion $\mathcal{E}_{V}^{1}$,
which is based on the fact that $tK_{V}^{h}+(1-t)\J$ is convex along
finite energy geodesic segments (cf. \cite[Theorem~4.7]{BDL20}).
Since
$$
t_{i}K_{V}^{h}(\phi_{*})+(1-t_{i})\J(\phi_{i})\leqq
t_{i}K_{V}^{h}(\phi_{i})+(1-t_{i})\J(\phi_{i})\leqq
t_{i}K_{V}^{h}(\phi_{*})+(1-t_{i})\J(\phi_{*}),
$$
we have $\J(\phi_{i})\leq\J(\phi_{*})$ for any $i$.
Together with \eqref{Jproper}, $\sup_{i}d_{1}(0,\phi_{i})<+\infty$.
By the same inequality as above (see also \cite[Proposition 3.1]{He19}),
we have a uniform entropy bound
$\sup_{i}H(\phi_{i})<+\infty$.
A priori estimate by Chen-Cheng \cite[Theorem~1.2]{CC1} again shows
a subsequence of
$\phi_{i}$ converges to a smooth $h$-modified extremal K\"{a}hler metric
$\phi_{\infty}$.
This completes the proof of the Lemma.
\end{proof}

\begin{theorem}\label{regularity}
Let $\phi_{*}$ be the minimizer of $K_{V}^{h}$ in
$\mathcal{E}_{V}^{1}$.
Then $\phi_{*}$ is smooth and defines a smooth $h$-modified extremal
K\"{a}hler metric.
\end{theorem}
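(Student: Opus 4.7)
The plan is to identify the minimizer $\phi_*$ with a smooth critical point of $K_V^h$, by combining Lemma~\ref{existence of h} with the Berman--Berndtsson rigidity of the K-energy. First I would invoke Lemma~\ref{existence of h}: the mere existence of the minimizer $\phi_*$ implies $K_V^h$ is bounded from below, so that lemma produces a smooth $h$-modified extremal K\"{a}hler metric $\phi_\infty\in\mathcal{H}_V$. Because $K_V^h$ is convex along finite energy geodesics in $\mathcal{E}^1_V$ and $\phi_\infty$ is a smooth critical point (by \eqref{dK}), $\phi_\infty$ is itself a minimizer, so $K_V^h(\phi_\infty)=K_V^h(\phi_*)=\inf_{\mathcal{E}^1_V}K_V^h$.

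Next I would connect the two minimizers by the finite energy geodesic $\phi_t\in\mathcal{E}^1_V$ with $\phi_0=\phi_*$ and $\phi_1=\phi_\infty$. Convexity of $K_V^h$ together with equality of the values at the two endpoints forces $K_V^h(\phi_t)$ to be constant in $t$. Since $\J_V^h$ is affine along finite energy geodesics (the $\mathcal{E}^1$-extension of Lemma~\ref{Jaffine}(ii), which is available via the $d_1$-continuity established for $\J_V^h$), the pure K-energy $K=H+\J_{-\mathrm{Ric}}=K_V^h-\J_V^h$ is also affine along $\phi_t$.

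Then I would apply the Berman--Berndtsson strict convexity principle for the K-energy \cite{BB17}, extended to finite energy geodesics by Berman--Darvas--Lu \cite{BDL20}: affineness of $K$ along a finite energy geodesic in $\mathcal{E}^1$ implies that the geodesic is induced by a one-parameter family $\{g_t\}$ of holomorphic automorphisms of $M$, i.e.\ $g_t^{\,*}\omega_{\phi_\infty}=\omega_{\phi_t}$. Since the entire geodesic lies in $\mathcal{E}^1_V$, each $g_t$ commutes with the $V_{\mathrm{Im}}$-flow and hence belongs to $G=\mathrm{Aut}_0(M,V)$. Because $\phi_\infty$ is smooth and $g_0\in G$ acts biholomorphically on $M$, the identity $\phi_*=g_0[\phi_\infty]$ forces $\phi_*$ to be smooth, and it satisfies the $h$-modified extremal equation pointwise.

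The main obstacle is the rigorous invocation of the BB--BDL rigidity in the $V$-invariant finite energy setting: one must verify that the connecting automorphisms genuinely preserve $V_{\mathrm{Im}}$-invariance, which follows by restricting the argument to the totally geodesic subspace $\mathcal{E}^1_V\subset\mathcal{E}^1$. A supporting point, used implicitly, is that both $\phi_*$ and $\phi_\infty$ have finite entropy (from $K_V^h(\phi_*)<+\infty$ and smoothness of $\phi_\infty$), which is what makes the geodesic between them behave well enough to apply the rigidity.
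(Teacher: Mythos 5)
Your first step is sound and matches the paper's setup: Lemma~\ref{existence of h} applies verbatim under your hypothesis and produces a smooth $h$-modified extremal potential $\phi_{\infty}\in\mathcal{H}_{V}$, which is a minimizer by convexity, so $K_{V}^{h}$ is indeed affine (constant) along the finite energy geodesic joining $\phi_{*}$ and $\phi_{\infty}$, and subtracting the affine $\J_{V}^{h}$ makes $K$ affine as well. The gap is the identification step. The rigidity you invoke --- ``affineness of $K$ along a finite energy geodesic in $\mathcal{E}^{1}$ implies the geodesic is induced by a one-parameter family of automorphisms'' --- is not available as a black box in the generality you need. In \cite{BB17} this requires both endpoints to have enough regularity, and the extension in \cite{BDL20} to one merely finite-energy endpoint is precisely their conditional regularity theorem, whose proof is a substantial adaptation of the Bando--Mabuchi perturbation argument and, crucially, uses that the smooth endpoint is a critical point of the functional whose affineness is being exploited, i.e.\ a cscK metric. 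Your smooth endpoint $\phi_{\infty}$ is a critical point of $K_{V}^{h}$, not of $K$, so neither the BDL statement for $K$ nor the paper's Theorem~\ref{BanMab} (which assumes both endpoints lie in $\mathcal{H}_{V}$) applies. You would have to reprove the rigidity for $K_{V}^{h}$ along a finite energy geodesic with one $\mathcal{E}^{1}_{V}$ endpoint, which is a task of essentially the same difficulty as the theorem you are trying to prove.

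The paper avoids this entirely. It approximates $\phi_{*}$ by smooth $\phi_{i}\in\mathcal{H}_{V}$ with $d_{1}(\phi_{i},\phi_{*})\to 0$ and convergent entropy, runs the continuity path \eqref{continuity path} anchored at the reference metric $\omega_{\phi_{i}}$, and exploits the minimizing property of $\phi_{*}$ to get $\J_{i}(\phi_{i}^{t})\leqq\J_{i}(\phi_{*})\leqq Cd_{1}(\phi_{i},\phi_{*})$, hence uniform distance and entropy bounds and, via the Chen--Cheng a priori estimates, smooth limits $u_{i}$ solving the $h$-modified extremal equation with $\lim_{i}I_{i}(u_{i})=0$. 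The identification of the limit $u$ with $\phi_{*}$ up to a constant is then quantitative, via \cite[Lemma~5.7]{CC2}, rather than via any uniqueness or rigidity statement involving a non-smooth endpoint. If you want to keep your route, the missing ingredient you must supply is an analogue of the Berman--Darvas--Lu conditional regularity theorem for $K_{V}^{h}$; as written, the step is a genuine gap.
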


\begin{proof}
We follow the argument in \cite[Theorem 5.1]{CC2} and \cite[Theorem 3.6]{He19}.
By \cite[Lemma~3.8]{He19}, we first take a sequence
$\phi_{i}\in\mathcal{H}_{V}$ such that
$\lim_{i\to\infty}d_{1}(\phi_{i},\phi_{*})=0$ and
$\lim_{i\to\infty}H(\phi_{i})=H(\phi)$.
Then $\lim_{i\to\infty}K_{V}^{h}(\phi_{i})=K_{V}^{h}(\phi_{*})$ 
by the $d_{1}$-continuity.
For each $i$, consider the continuity path
\begin{equation}\label{continuity path}
t(S(\phi)-h(\theta_{V}^{\phi})-\underbar{S})
-(1-t)(\Lambda_{\omega_{\phi}}\omega_{\phi_{i}}-n)=0
\end{equation}
for $t\in (0,1]$.
Theorem~\ref{Hashimoto} and Corollary~\ref{openness} show the set of $t$
such that a solution of \eqref{continuity path} exists is open in
$(0,1]$.
Consider the Aubin's functionals with a reference metric
$\omega_{\phi_{i}}$.
Namely we consider
\begin{align*}
&I_{i}(\phi):=\vol^{-1}
\int_{M}\phi(\omega_{\phi_{i}}^{n}-\omega_{\phi}^{n}),\\
&J_{i}(\phi):=
\vol^{-1}\int_{M}\phi\omega_{\phi_{i}}^{n}
-\frac{\vol^{-1}}{n+1}\int_{M}\phi\sum_{j=0}^{n}
\omega_{\phi_{i}}^{n}\wedge\omega_{\phi}^{n}
\end{align*}
and $\J_{i}:=I_{i}-J_{i}$.
By the same argument as in Lemma~\ref{existence of h}, there exists a
unique smooth solution
$\phi_{i}^{t}\in\mathcal{H}_{V,0}$ for any $t\in (0,1)$ which is a
minimizer of
$\tilde{K}_{t}^{i}:=tK_{V}^{h}+(1-t)\J_{i}$ on $\mathcal{E}_{V}$.
Thus for any $\phi\in\mathcal{E}_{V}$,
\begin{equation}\label{ineq10}
tK_{V}^{h}(\phi_{i}^{t})+(1-t)\J_{i}(\phi_{i}^{t})\leqq
tK_{V}^{h}(\phi)+(1-t)\J_{i}(\phi).
\end{equation}
Since it is easy to see that $\phi_{i}$ minimizes $\J_{i}$, we have 
$0=\J_{i}(\phi_{i})\leqq\J_{i}(\phi_{i}^{t}).$
Together with \eqref{ineq10}, we have
\begin{equation}\label{K-bound}
K_{V}^{h}(\phi_{i}^{t})\leqq{K_{V}^{h}(\phi_{i})}.
\end{equation}
Since $\phi_{*}$ minimizes $K_{V}^{h}$, the inequality \eqref{ineq10}
gives
$\J_{i}(\phi_{i}^{t})\leqq\J_{i}(\phi_{*}).$
Note the classical inequality $I_{i}\leqq(n+1)\J_{i}\leqq{nI_{i}}$ and
an inequality
$I_{i}(\phi_{*})\leqq{Cd_{1}(\phi_{i},\phi_{*})}$ proved by 
Darvas \cite{Dar15}.
Summing these inequalities up, we have
$I_{i}(\phi_{i}^{t})\leqq Cd_{1}(\phi_{i},\phi_{*})$
where the constant $C$ is independent of $t$.
Since $\lim_{i\to\infty}d_{1}(\phi_{i},\phi_{*})=0$,
we have $\lim_{i\to\infty}I_{i}(\phi_{i}^{t})=0$
which is uniform in $t\in(0,1)$.
By \cite[Theorem~1.8]{BBEGZ19} and the inequality
$|I_{\omega_{0}}(\phi_{i})-I_{\omega_{0}}(\phi_{*})|\leqq Cd_{1}(\phi_{i},\phi_{*})$,
we have
$I_{\omega_{0}}(\phi_{i}^{t})
\leqq{C(I_{\omega_{0}}(\phi_{i})+I_{i}(\phi_{i}^{t}))}\leqq{C}$
which is uniform in $t\in(0,1)$ and $i\gg1$.
By \cite[Proposition~5.5]{DR17}, we thus have the distance bound
$d_{1}(0,\phi_{i}^{t})\leqq{CI_{\omega_{0}}(\phi_{i}^{t})+C}\leqq{C}$
which is uniform in $t\in(0,1)$ and $i\gg1$.
The inequality \eqref{K-bound} and this distance bound show that the
entropy $H(\phi_{i}^{t})$ has uniform upper bound in $t\in(0,1)$ and $i\gg1$. 
By a priori estimate by Chen-Cheng \cite[Theorem~1.2]{CC1},
all derivatives of $\phi_{i}^{t}$ are uniformly bounded in $t\in(0,1)$ and $i\gg1$.
Thus, when $t\to 1$, a subsequence of
the potential $\phi_{i}^{t}$ converges smoothly to
$u_{i}\in\mathcal{H}_{V,0}$
solving the equation \eqref{continuity path}, that is,
the h-modified extremal K\"{a}hler metric equation
$S(u_{i})-\underbar{S}-h(\theta_{V}^{(u_{i})})=0$.

Note that $d_{1}(0,u_{i})$ is uniformly bounded.
We can take a subsequence $\{u_{i}\}_{i}$ such that this converges smoothly to a
smooth h-modified extremal K\"{a}hler metric $u\in\mathcal{H}_{V,0}$.
By $\lim_{i\to\infty}I_{i}(u_{i})=0$ and an argument in \cite[Lemma~5.7]{CC2},
the difference between $u$ and
$\phi_{*}$ is constant.
This completes the proof.
\end{proof}

\subsection{Bando-Mabuchi type uniqueness theorem}
We argue the property (P5) in the proof of Theorem \ref{main thm'}.
\begin{theorem}\label{BanMab}
For any two $h$-modified extremal K\"{a}hler metrics
$\omega_{\phi_{0}},\omega_{\phi_{1}}$ in $\mathcal{H}_{V}$,
there exists $g\in\mathrm{Aut}_{0}(X,V)$ such that
$\omega_{\phi_{0}}=g^{*}\omega_{\phi_{1}}$.
\end{theorem}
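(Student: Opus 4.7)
The plan is to run a Bando-Mabuchi style argument driven by convexity of $K_V^h$ along weak geodesics, reducing uniqueness modulo $\mathrm{Aut}_0(M,V)$ to the rigidity theorem of Berman-Berndtsson for the unweighted K-energy.

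First I would connect the two smooth $h$-modified extremal K\"ahler potentials $\phi_0,\phi_1\in\mathcal{H}_V$ by the (weak) finite-energy geodesic $[0,1]\ni t\mapsto \phi_t\in\mathcal{E}^1_V$, which by Chen's regularity is $C^{1,1}$ and which stays in the $V_{\mathrm{Im}}$-invariant locus because the endpoints are. By equation \eqref{dK}, $\phi_0$ and $\phi_1$ are minimizers of $K_V^h$ on $\mathcal{H}_V$, and, as observed after Lemma~\ref{Jaffine}, $K_V^h$ extends to a $d_1$-lsc convex functional on $\mathcal{E}^1_V$ along finite energy geodesics (convexity of the K-energy $K=H+\J_{-\mathrm{Ric}}$ is \cite{BB17}; affineness of $\J_V^h$ is Lemma~\ref{Jaffine}(ii)). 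Consequently $t\mapsto K_V^h(\phi_t)$ is convex with equal finite values at $t=0,1$, and is therefore \emph{affine} (in fact constant) along the geodesic.

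Next, using Lemma~\ref{Jaffine}(ii) once more, $\J_V^h(\phi_t)$ is affine in $t$; subtracting, the classical K-energy $K(\phi_t)=K_V^h(\phi_t)-\J_V^h(\phi_t)$ is affine along the $C^{1,1}$-geodesic joining the two smooth endpoints $\phi_0$ and $\phi_1$. Here I would invoke the Berman-Berndtsson rigidity theorem \cite{BB17} (in the form used for uniqueness of cscK metrics): if the K-energy is affine along a weak geodesic connecting two smooth K\"ahler potentials, then the geodesic is induced by a one-parameter subgroup $\{g_t\}\subset\mathrm{Aut}_0(M)$, i.e.\ $\omega_{\phi_t}=g_t^*\omega_{\phi_0}$ and hence $\omega_{\phi_1}=g_1^*\omega_{\phi_0}$.

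Finally I would show that $g:=g_1$ lies in $G=\mathrm{Aut}_0(M,V)$. The generator $X$ of $\{g_t\}$ is determined, up to a constant, by $\theta_X^{(\omega_{\phi_0})}=\dot\phi_0$; since $\phi_t\in\mathcal{H}_V$ for all $t$, the function $\dot\phi_0$ is $V_{\mathrm{Im}}$-invariant, which forces $[X,V]=0$ (equivalently, $V\bigl(\theta_X^{(\omega_{\phi_0})}\bigr)=0$ combined with the standard correspondence between Hamiltonians and their holomorphic gradients), so $g_t\in G$. The main obstacle here is making sure the Berman-Berndtsson rigidity result applies in this weighted setting; the key reduction is precisely the observation that subtracting the affine functional $\J_V^h$ reduces the problem to the classical K-energy, after which no new convexity theory is required beyond verifying the bracket condition $[X,V]=0$ coming from $V_{\mathrm{Im}}$-invariance of the geodesic.
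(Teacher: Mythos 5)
Your reduction of the problem to the classical K-energy by subtracting the affine functional $\J_V^h$ is a correct and useful observation (the paper uses the same fact, phrased as the vanishing of the Hessian of $\J_V^h$), and the argument that $K_V^h$ is constant along the connecting $C^{1,1}$-geodesic is fine. The gap is in the step where you invoke ``the Berman--Berndtsson rigidity theorem'' in the form: \emph{if the K-energy is affine along a weak geodesic connecting two smooth potentials, then the geodesic is induced by a one-parameter subgroup of $\mathrm{Aut}_0(M)$}. No such black-box statement is available at the regularity of weak geodesics. The obstruction is exactly the one that makes uniqueness of cscK metrics hard: along a merely $C^{1,1}$ (in fact only $C^{1,\alpha}$-guaranteed) geodesic one cannot differentiate the K-energy twice and read off $\int|\mathcal{D}_{\omega_{\phi_t}}\dot\phi_t|^2\omega_{\phi_t}^n=0$, so affineness does not directly produce a holomorphic vector field. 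What Berman--Berndtsson actually prove in the K-energy setting is the uniqueness theorem itself, and their proof is the perturbation (``bifurcation'') argument: perturb the functional by $sF_\mu$ with $F_\mu=I_\mu-E$, solve the linearized equation $-D_{v_0}dK|_{\phi_0}=dF_\mu|_{\phi_0}$ at the critical points (after minimizing $F_\mu$ over the reduced automorphism orbit to kill the cokernel of the Lichnerowicz operator), deduce $d_2(\phi_0^s,\phi_1^s)^2\leqq Cs$ from convexity plus the strict convexity of $I_\mu$, and let $s\to 0$. This is precisely the route the paper takes for $K_V^h$; your proposal assumes its conclusion as a citable lemma.

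A secondary problem with your reduction: even if one had a rigidity statement for the classical K-energy, the endpoints $\phi_0,\phi_1$ of your geodesic are $h$-modified extremal, hence critical points of $K_V^h$ but \emph{not} of $K$, so the Berman--Berndtsson machinery (which perturbs around critical points of the functional whose affineness is being exploited) cannot be applied to $K$ along this geodesic; it must be run for $K_V^h$ itself, as the paper does. Your closing discussion of why the resulting automorphism commutes with $V$ (via $T$-invariance, so that $g\in\mathrm{Aut}_0(M,V)$ rather than just $\mathrm{Aut}_0(M)$) addresses a genuine point, but it is moot until the main step is repaired.
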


\begin{proof}
We follow an argument of Berman-Berndtsson \cite{BB17}.
Let $T$ be the compact torus
defined by the closure of the set of flows generated by $\mathrm{Im}V$,
and let $\mu$ be a $T$-invariant smooth volume form on $M$ with
$\int_{M}d\mu=\Vol$. 
For any $\phi\in\mathcal{H}_{V}$, define
$$
F_{\mu}(\phi):=\int_{M}\phi d\mu-E(\phi):=I_{\mu}(\phi)-E(\phi).
$$
Consider the functional $K_{V}^{h}+sF_{\mu}$ for small $s>0$ to define
the derivative
$F_{V}(\phi,s):=d(K_{V}^{h}+sF_{\mu})|_{\phi}$
at $\phi\in\mathcal{H}_{V}$.

The space of K\"{a}hler metrics $\mathcal{H}_{V}$ has a natural
$L^{2}$-metric
(called the Mabuchi metric)
defined by $\int_{M}v_{0}v_{1}\omega_{\phi}^{n}/\Vol$ for tangent
vectors $v_{0},v_{1}$
in $T_{\phi}\mathcal{H}_{V}$.
This induces the connection $D$ on the space of $1$-forms on
$\mathcal{H}_{V}$.
We regard the differentials $dK_{V}^{h}$ and $dF_{\mu}$ 
of the functionals $K_{V}^{h}$ and $F_{\mu}$ 
as $1$-forms on $\mathcal{H}_{V}$ and consider the equation
\begin{equation}\label{1-form}
-D_{v_{0}}dK_{V}^{h}|_{\phi_{0}}=dF_{\mu}|_{\phi_{0}}
\end{equation}
for unknown $v_{0}\in T_{\phi_{0}}\mathcal{H}_{V}$.

We put
$$
\mathfrak{g}_{\omega_{0}, V}:=
\Set{v\in\mathrm{Lie}(\mathrm{Aut}_{0}(M,V))\,|\,
i_{v}\omega_{0}=\sqrt{-1}\,\dbar{f} 
\text{ where }f\in C^{\infty}(M;\mathbb{R})}
$$
and
$\Gamma_{\omega_{0},V}:=\exp(\mathfrak{g}_{\omega_{0},V})$.
In view of an argument in \cite[Proposition~4.7]{BB17},
we can assume that the $h$-modified extremal K\"{a}hler metric
$\phi_{0}$
minimizes
$F_{\mu}$
on the orbit
$(\Gamma_{\omega_{0},V})\cdot[\phi_{0}]\subset\mathcal{H}_{V}$
by the replacing $\omega_{\phi_{0}}$ by $g_{0}^{*}\omega_{\phi_{0}}$
for some $g_{0}\in\Gamma_{\omega_{0},V}$.
Since the Hessian of $\mathbb{J}_{V}^{H}$ vanishes on
$T_{\phi_{0}}\mathcal{H}_{V}$,
an argument in
\cite[Proposition~4.3, Proof of Theorem~4.4
and pp.\ 1192-1193]{BB17}
shows that the equation \eqref{1-form} has
a $T$-invariant solution $v_{0}$.
This implies
\begin{equation}\label{F0}
F_{V}(\phi_{0}+sv_{0}, s)=\mathcal{O}(s^{2}),
\end{equation}
since for a smooth curve $\psi_{s}$ of continuous function,
$$
\frac{d}{ds}\Big|_{0}\langle
F_{V}(\phi_{0}+sv_{0}, s),\psi_{s}\rangle
=\langle
D_{v_{0}}dK_{V}^{h}|_{\phi_{0}},\psi_{0}\rangle
+\langle
dK_{V}^{h}|_{\phi_{0}},D_{v_{0}\psi_{s}}\rangle
+\langle
dF_{\mu}|_{\phi_{0}},\psi_{0}\rangle=0
$$
where $\langle\cdot,\cdot\rangle$ is the pairing for $1$-forms.
Similarly, under replacing $\omega_{\phi_{1}}$
by $g_{1}^{*}\omega_{\phi_{1}}$
for some $g_{1}\in\mathrm{Aut}_{0}(X,V)$,
there exists $T$-invariant $v_{1}\in T_{\phi_{1}}\mathcal{H}_{V}$
satisfying
\begin{equation}\label{F1}
F_{V}(\phi_{1}+sv_{1},s)=\mathcal{O}(s^{2}).
\end{equation}

Consider the $C^{1,1}$-geodesic $\phi_{t}^{s}$ connecting
$\phi_{0}+sv_{0}$ and $\phi_{1}+sv_{1}$.
Note that $\phi_{t}^{s}$ is also $T$-invariant.
Finally, an argument in
\cite[Proposition~4.1 and Proof of Theorem~4.4]{BB17}
together with
the convexity of $t\mapsto K_{V}^{h}(\phi_{t}^{s})$, the linearity of
$t\mapsto E(\phi_{t}^{s})$ and
the properties \eqref{F0} and \eqref{F1} show that there exists a
uniform constant $C>0$ such that
$d_{2}(\phi_{0}^{s},\phi_{1}^{s})^{2}\leqq{Cs}$, which implies
$\omega_{\phi_{0}}=\omega_{\phi_{1}}$.
\end{proof}

\section{An $h$-modified extremal K\"{a}hler metrics
and the geodesic stability}
We argue the characterization for the existence of an $h$-modified
extremal K\"{a}hler metric
in terms of geodesic stability conditions
which has independent interest to study of the metric.
\label{gstability}
\subsection{Uniform geodesic stability}
We first characterize the existence of an $h$-modified extremal
K\"{a}hler
metric as a uniform geodesic stability condition.
Let $G:=\mathrm{Aut}_{0}(M,V)$.

\begin{definition}
Fix $\psi_{0}\in\mathcal{H}_{V,0}$ with $K_{V}^{h}(\psi_{0})<+\infty$.
We say $(M,\Omega,V)$ is
{\it{uniform geodesic stable}} at $\psi_{0}$ if
there exists $C>0$ such that for any geodesic ray
$[0,+\infty)\ni t \to \psi_{t}\in\mathcal{E}^{1}_{V,0}$ starting
at $\psi_{0}$, we have
$$
\lim_{t\to+\infty}\frac{K^{h}_{V}(\psi_{t})-K^{h}_{V}(\psi_{0})}{t}
\geqq{C\limsup_{t\to+\infty}\frac{d_{1,G}(\psi_{t},\psi_{0})}{t}}.
$$
\end{definition}

\begin{theorem}\label{uniform stable thm}
Fix $\psi_{0}\in\mathcal{E}_{V,0}^{1}$ with
$K_{V}^{h}(\psi_{0})<+\infty$.
There exists an $h$-modified extremal K\"{a}hler metric in
$\mathcal{H}_{V,0}$
if and only if
the $h$-modified Mabuchi functional $K^{h}_{V}$ is $G$-invariant and
$(M,\Omega,V)$ is uniform geodesic stable at
$\psi_{0}\in\mathcal{E}_{V,0}^{1}$ with
$K_{V}^{h}(\psi_{0})<+\infty$.
Moreover, the condition of $G$-invariance can be removed.
\end{theorem}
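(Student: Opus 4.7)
The strategy is to reduce Theorem \ref{uniform stable thm} to Theorem \ref{main thm'} by establishing the equivalence: uniform geodesic stability at some $\psi_0$ with $K_V^h(\psi_0)<+\infty$ holds if and only if $K_V^h$ is $d_{1,G}$-coercive. The $G$-invariance hypothesis is then removed exactly as at the end of the proof of Theorem \ref{main thm'}: for $X\in\mathrm{Lie}(G)$, writing $g_t^*\omega_0=\omega_0+\deldel\phi_t$ for the one-parameter subgroup $g_t$ generated by $\mathrm{Re}(X)$, one checks that $\frac{d}{dt}K_V^h(\phi_t)$ is constant in $t$; since uniform geodesic stability yields (in particular) a lower bound for $K_V^h$ along the ray $\phi_t$, this constant must vanish, proving $G$-invariance.

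For the direction \emph{existence} $\Rightarrow$ \emph{uniform geodesic stability}, Theorem \ref{main thm'} gives $K_V^h(\phi)\geqq C\,d_{1,G}(0,\phi)-D$ together with $G$-invariance. For any finite-energy geodesic ray $\psi_t$ in $\mathcal{E}^1_{V,0}$ based at $\psi_0$, the triangle inequality $d_{1,G}(0,\psi_t)\geqq d_{1,G}(\psi_0,\psi_t)-d_{1,G}(0,\psi_0)$ yields
$$
\frac{K_V^h(\psi_t)-K_V^h(\psi_0)}{t}\geqq C\,\frac{d_{1,G}(\psi_0,\psi_t)}{t}-\frac{C\,d_{1,G}(0,\psi_0)+D+K_V^h(\psi_0)}{t}.
$$
Convexity of $K_V^h$ along finite-energy geodesics (Berman--Darvas--Lu together with the affineness of $\J_{V}^{h}$ in Lemma \ref{Jaffine}) ensures the left hand side has a limit as $t\to+\infty$, and $\limsup$ in $t$ delivers the uniform geodesic stability inequality.

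For the direction \emph{uniform geodesic stability} $\Rightarrow$ \emph{coercivity}, I would argue by contradiction. Suppose $K_V^h$ is not coercive: there exist $\phi_i\in\mathcal{H}_{V,0}$ with $l_i:=d_{1,G}(0,\phi_i)\to+\infty$ and $K_V^h(\phi_i)/l_i\to 0$. Using $G$-invariance of $K_V^h$, replace each $\phi_i$ by a $G$-translate that almost realizes the infimum in $d_{1,G}(\psi_0,\phi_i)$, so $\tilde{l}_i:=d_1(\psi_0,\phi_i)=d_{1,G}(\psi_0,\phi_i)+o(1)$ and $\tilde{l}_i\to+\infty$. Let $\phi_i^s$ be the unit-speed $d_1$-geodesic from $\psi_0$ to $\phi_i$. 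The Darvas-type compactness used in Proposition \ref{P2*} produces, after extracting a subsequence, a finite-energy geodesic ray $\psi_s\in\mathcal{E}^1_{V,0}$ starting at $\psi_0$ of unit $d_1$-speed. Convexity of $K_V^h$ along $\phi_i^s$ and $d_1$-lower semicontinuity give, for each fixed $s$,
$$
K_V^h(\psi_s)\leqq\liminf_{i\to\infty}K_V^h(\phi_i^s)\leqq\liminf_{i\to\infty}\Bigl[\bigl(1-s/\tilde{l}_i\bigr)K_V^h(\psi_0)+(s/\tilde{l}_i)K_V^h(\phi_i)\Bigr]=K_V^h(\psi_0),
$$
so $\lim_{s\to+\infty}(K_V^h(\psi_s)-K_V^h(\psi_0))/s\leqq 0$. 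Uniform geodesic stability then forces $\limsup_{s\to+\infty}d_{1,G}(\psi_0,\psi_s)/s=0$, contradicting the comparability $d_{1,G}(\psi_0,\phi_i^{\tilde{l}_i})=d_{1,G}(\psi_0,\phi_i)\geqq\tilde{l}_i-o(1)$ inherited in the limit. Hence $K_V^h$ is coercive and Theorem \ref{main thm'} supplies the desired $h$-modified extremal metric.

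The main obstacle is precisely this final step: ensuring that the limiting geodesic ray $\psi_s$ has $\limsup_{s\to+\infty}d_{1,G}(\psi_0,\psi_s)/s>0$. Since $d_{1,G}$ is defined via an infimum it is only $d_1$-lower semicontinuous, so some care is required to transfer the $d_{1,G}$-$d_1$ comparability enjoyed by the approximating segments $\phi_i^s$ (obtained from our $G$-orbit normalization) to the $d_1$-limit ray, a fact which ultimately rests on the $d_1$-isometric nature of the $G$-action on $\mathcal{E}^1$.
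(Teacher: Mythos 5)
Your overall architecture is sound, but it is a genuinely more explicit route than the paper's: the paper proves this theorem in one stroke by feeding the data $(\mathcal{H}_{V,0},d_{1},K_{V}^{h},G)$ into the abstract existence\,/\,properness principle \cite[Theorem~4.7]{Dar19}, with the compactness property (P2) replaced by the strengthened (P2)* supplied by Proposition~\ref{P2*} --- that is precisely the variant of the abstract theorem whose output is uniform geodesic stability rather than coercivity. Your plan --- prove by hand that uniform geodesic stability at $\psi_{0}$ is equivalent to $d_{1,G}$-coercivity and then quote Theorem~\ref{main thm'} --- amounts to reproving the relevant piece of that machinery. The ``existence $\Rightarrow$ stability'' direction via the triangle inequality for $d_{1,G}$ is fine (modulo extending the coercivity estimate from $\mathcal{H}_{V,0}$ to $\mathcal{E}^{1}_{V,0}$, which is standard), and your ``stability $\Rightarrow$ coercivity'' contradiction, including the transfer of the lower bound $d_{1,G}(\psi_{0},\phi_{i}^{s})\geqq s-o(1)$ to the limit ray, is the same mechanism as \cite[Lemma~7.4]{CC2}, which the paper itself uses in Proposition~\ref{bounded of K}.

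There are, however, two gaps. First, your negation of coercivity is incomplete: non-coercivity also allows a sequence with $d_{1,G}(0,\phi_{i})$ bounded and $K_{V}^{h}(\phi_{i})\to-\infty$; that case must be excluded separately using the compactness of Proposition~\ref{P2*} together with the $d_{1}$-lower semicontinuity of $K_{V}^{h}$ (which takes values in $\mathbb{R}\cup\{+\infty\}$). Second, and more seriously, the removal of the $G$-invariance hypothesis is asserted rather than proved. You claim that uniform geodesic stability ``yields (in particular) a lower bound for $K_{V}^{h}$ along the ray $\phi_{t}=g_{t}[\phi]$'', but that ray emanates from $\phi$, not from $\psi_{0}$, so the stability hypothesis says nothing directly about it. The paper devotes Lemma~\ref{Kbounded} to exactly this point: assuming the constant slope $a=\frac{d}{dt}K_{V}^{h}(\phi_{t})$ were negative, one connects $\psi_{0}$ to $\phi_{t_{k}}$ by unit-speed geodesic segments, bounds $K_{V}^{h}$ along them by convexity, extracts a limit ray starting at $\psi_{0}$ with strictly negative slope (via Proposition~\ref{P2*} and a diagonal argument as in \cite[Lemma~6.5]{CC2}), and contradicts stability. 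Without this limiting-ray construction, the $G$-invariance --- which you also rely on in the converse direction when choosing representatives in the $G$-orbit --- is unsupported.
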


\begin{proof}
Following \cite[Theorem~4.7]{Dar19}, it suffice to establish properties
(P1), (P3), (P4), (P5), (P6) and (P2)*.
The last property (P2)* is due to Proposition~\ref{P2*}.
The other properties are explained in the proof of
Theorem~\ref{main thm'}.
We argue that 
the condition of $G$-invariance can be removed
in the next lemma.
This completes the proof.
\end{proof}

Note that the limit in the left hand side of the above inequality exists
because of the convexity of $K^{h}_{V}$.

\begin{lemma}\label{Kbounded}
Fix $\psi_{0}\in\mathcal{H}_{V,0}$ with $K_{V}^{h}(\psi_{0})<+\infty$.
If for any geodesic ray
$[0,+\infty)\ni{t}\to\psi_{t}\in\mathcal{E}^{1}_{V,0}$
starting at $\psi_{0}$, we have
$$
\lim_{t\to+\infty}
\frac{K^{h}_{V}(\psi_{t})-K^{h}_{V}(\psi_{0})}{t}\geqq{0},
$$
then the $h$-modified Mabuchi functional $K_{V}^{h}$ is $G$-invariant.
\end{lemma}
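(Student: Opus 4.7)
The approach is to exploit one-parameter subgroups of $G=\mathrm{Aut}_0(M,V)$ to manufacture explicit geodesic rays starting at $\psi_0$, and then combine the slope hypothesis with the equivariant computation already carried out at the end of the proof of Theorem~\ref{main thm'}. Given $X\in\mathrm{Lie}(G)$, let $g_t:=\exp(tX_{\mathrm{Re}})$, and let $\phi_t\in\mathcal{H}_V$ denote the unique potential satisfying $g_t^{*}\omega_{\psi_0}=\omega_0+\deldel\phi_t$ and $E(\phi_t)=0$. Passing from the raw potential to its $E$-normalized cousin is harmless, since $K_V^h$ is invariant under constant shifts of the potential (by the normalization $\int_M h(\theta_V^{(\phi)})\omega_\phi^n=0$) and since $E$ is affine along Mabuchi geodesics (Lemma~\ref{Jaffine}(ii)), so the normalization preserves the geodesic property. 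By the standard Semmes-Donaldson-Mabuchi picture, $\{\phi_t\}_{t\geq0}\subset\mathcal{H}_{V,0}$ is then a smooth geodesic ray in $\mathcal{E}^1_{V,0}$ emanating from $\psi_0$.

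The core computation is exactly the one performed in the final paragraph of the proof of Theorem~\ref{main thm'}. Because $g_t$ is holomorphic and commutes with $V$ (as $g_t\in G$), one has the equivariances $S(g_t^{*}\omega)=S(\omega)\circ g_t$, $\theta_X^{(g_t^{*}\omega)}=\theta_X^{(\omega)}\circ g_t$, and $\theta_V^{(g_t^{*}\omega)}=\theta_V^{(\omega)}\circ g_t$. Substituting $\dot\phi_t=\mathrm{Re}(\theta_X^{(\omega_{\phi_t})})$ (modulo an additive constant, absorbed by shift-invariance) into the variational formula \eqref{dK} and performing the change of variables by $g_t$ yields
\[
\frac{d}{dt}K_V^h(\phi_t)\;=\;-\Vol^{-1}\int_M\mathrm{Re}\bigl(\theta_X^{(\omega_{\psi_0})}\bigr)\bigl(S(\omega_{\psi_0})-\underline{S}-h(\theta_V^{(\omega_{\psi_0})})\bigr)\omega_{\psi_0}^n\;=:\;c_X,
\]
independently of $t$. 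Hence $K_V^h(\phi_t)=K_V^h(\psi_0)+c_X\,t$, and the slope hypothesis applied to this ray forces $c_X\geq0$. Re-running the construction with $-X$ in place of $X$ produces the reversed ray, whose asymptotic slope is $-c_X$, so the hypothesis also gives $-c_X\geq0$; together $c_X=0$.

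Consequently $K_V^h$ is constant along every orbit $\{g_t\cdot\psi_0\}_{t\in\mathbb{R}}$ coming from a one-parameter subgroup of $G$. Since $G$ is connected and hence generated by such subgroups, we conclude $K_V^h(g\cdot\psi_0)=K_V^h(\psi_0)$ for every $g\in G$, which is the claimed $G$-invariance. The delicate point in the argument is the equivariance input that makes $\frac{d}{dt}K_V^h(\phi_t)$ independent of $t$; once this is secured, the $\pm X$ sign-flip yields the vanishing immediately.
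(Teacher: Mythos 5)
Your argument hinges on the claim that the orbit $t\mapsto\phi_t$ of $\psi_0$ under $g_t=\exp(tX_{\mathrm{Re}})$ is a geodesic ray in $\mathcal{E}^1_{V,0}$, and this is where the proof breaks down. The orbit of a K\"{a}hler potential under the flow of $X_{\mathrm{Re}}$ is a Mabuchi geodesic if and only if $\theta_X^{(\omega_{\psi_0})}$ can be chosen real-valued, i.e.\ $L_{X_{\mathrm{Im}}}\omega_{\psi_0}=0$. Indeed, writing $\theta_X^{(\omega_{\phi_t})}=u_t+\sqrt{-1}\,v_t$ one has $\dot\phi_t=u_t$ up to an affine-in-$t$ constant, and a direct computation gives
$$
\ddot\phi_t-\bigl|\partial\dot\phi_t\bigr|^2_{\omega_{\phi_t}}
=-\mathrm{Im}\Bigl(g^{\overline{\jmath}i}\,\partial_{\overline{\jmath}}v_t\,\partial_iu_t\Bigr),
$$
which is the (generally nonzero) Poisson bracket obstruction; it vanishes when $v_t\equiv\mathrm{const}$ but not otherwise. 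Membership of $X$ in $\mathrm{Lie}(\mathrm{Aut}_0(M,V))$ only forces $[X,V]=0$, not that $X_{\mathrm{Im}}$ is a Killing field of the fixed metric $\omega_{\psi_0}$, so for general $X$ the potential $\theta_X^{(\omega_{\psi_0})}$ is genuinely complex-valued, the orbit is not a geodesic, and the slope hypothesis --- which quantifies only over geodesic rays --- cannot be applied to it. Since $G$ need not be generated by one-parameter subgroups whose imaginary parts preserve $\omega_{\psi_0}$, restricting to the ``good'' $X$'s does not recover full $G$-invariance.

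The portion of your argument that is correct, and is also the paper's starting point, is the equivariance computation showing that $a:=\frac{d}{dt}K_V^h(\phi_t)$ is independent of $t$ (a Futaki-type invariant), so that it suffices to rule out $a<0$. But the paper then proceeds quite differently, precisely to avoid identifying the orbit with a geodesic: assuming $a<0$, it joins $\psi_0$ to $\phi_{t_k}$ by genuine unit-speed finite energy geodesic segments $\rho_k$, combines the Lipschitz bound $d_1(\psi_0,\phi_{t_k})\leqq Ct_k+d_1(\psi_0,\phi)$ with the convexity of $K_V^h$ along $\rho_k$ to obtain
$$
K_V^h(\rho_k(s))\leqq\max\{K_V^h(\psi_0),K_V^h(\phi)\}+\frac{s\,a\,t_k}{Ct_k+d_1(\psi_0,\phi)},
$$
and then extracts, via the compactness of Proposition~\ref{P2*} and a diagonal argument as in \cite[Lemma~6.5]{CC2}, a limiting geodesic ray from $\psi_0$ whose slope is at most $a/C<0$, contradicting the hypothesis. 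Some such limiting-segment construction is needed in place of your direct appeal to the ``Semmes--Donaldson--Mabuchi picture''.
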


\begin{proof}
We follow the proof of \cite[Lemma~7.1]{CC2}.
Take any $\phi\in\mathcal{H}_{V,0}$ and any one parameter subgroup 
$[0,+\infty)\ni{t}\mapsto{g_{t}}\in\mathrm{Aut}_{0}(M,V)$
generated by $X_{\mathrm{Re}}$ where $X\in\mathrm{Aut}_{0}(M,V)$.
Define $\phi_{t}:=g_{t}[\phi]\in\mathcal{H}_{V,0}$.
It suffice to show $\frac{d}{dt}K_{V}^{h}(\phi_{t})=0$.
Since the same argument as in the proof of Theorem~\ref{main thm'} shows
$\frac{d}{dt}K_{V}^{h}(\phi_{t})$ is independent of $t$,
we can assume $\frac{d}{dt}K_{V}^{h}(\phi_{t})(=:a)<0$
without loss of generality.

The same argument as in the proof of \cite[Lemma~7.1]{CC2} shows
that there exists a constant $C>0$ such that
for a sequence $t_{k}\nearrow+\infty$ and for the unit speed finite
energy
geodesic segments
$\rho_{k}(s):[0,d_{1}(\psi_{0},\phi_{t_{k}})]
\to\mathcal{E}_{V,0}^{h}$ 
connecting $\psi_{0}$ and $\phi_{t_{k}}$, we have
$$
K_{V}^{h}(\rho_{k}(s))\leqq
\mathrm{max}\{K_{V}^{h}(\psi_{0}),K_{V}^{h}(\phi)\}
+\frac{sat_{k}}{Ct_{k}+d_{1}(\psi_{0},\phi)}
$$
for any $s\in [0,d_{1}(\psi_{0},\phi_{t_{k}})]$.
Here we used the lower semi continuity of $K_{V}^{h}$ and the convexity
of $K_{V}^{h}$ along $\rho_{k}$.
Thus for each fixed $s$, the functional $K_{V}^{h}(\rho_{k}(s))$ is
bounded above 
which is uniform in $k$.
As discussed in the proof of Proposition~\ref{P2}, 
for each fixed $s$, a compactness result in
\cite[Theorem~5.6]{DR17} can be applied to show that
a subsequence $\{\rho_{k_{l}}(s)\}_{l}\subset\mathcal{E}_{V,0}^{1}$
converges in $d_{1}$-distance to an element in $\mathcal{E}_{V,0}^{1}$.
The same argument which is used Cantor's diagonal sequence argument
as in the proof of \cite[Lemma~6.5]{CC2} concludes that
there exists a subsequence $\{k_{l}\}_{l}$ such that for any
$s\geqq{0}$,
$\rho_{k_{l}}(s)$ converges in $d_{1}$-distance to a unit speed locally
finite energy
geodesic ray $\rho_{\infty}(s)$ starting at $\psi_{0}$.
By the lower semicontinuity of $K_{V}^{h}$,
$$
K_{V}^{h}(\rho_{\infty}(s))\leqq\liminf_{l}K_{V}^{h}(\rho_{k_{l}}(s))
\leqq\mathrm{max}\{K_{V}^{h}(\phi),K_{V}^{h}(\psi_{0})\}+\frac{sa}{C}
$$
to conclude
$$
\lim_{s\to+\infty}
\frac{K_{V}^{h}(\rho_{\infty}(s)-K_{V}^{h}(\psi_{0})}{s}
\leqq\frac{a}{C}<0
$$
which is a contradiction.
\end{proof}

\subsection{Geodesic stability}
Next we characterizes the existence of an $h$-modified extremal
K\"{a}hler metric as a geodesic stability condition.

\begin{definition}
Fix $\psi_{0}\in\mathcal{E}_{V,0}^{1}$ with
$K_{V}^{h}(\psi_{0})<+\infty$.
We say $(M,\Omega, V)$ is {\it{geodesic stable}} at $\psi_{0}$ if
for any locally finite energy geodesic ray
$\psi_{t}:[0,+\infty)\to\mathcal{E}_{V,0}^{1}$
starting at $\psi_{0}$ with unit speed,
exactly one of the following alternative holds.
\begin{enumerate}
\item $\lim_{t\to+\infty}
(K_{V}^{h}(\psi_{t})-K_{V}^{h}(\psi_{0}))/t>{0},$
\item $\lim_{t\to+\infty}
(K_{V}^{h}(\psi_{t})-K_{V}^{h}(\psi_{0}))/t=0$
and $\psi_t$ is parallel to another
geodesic ray
$\tilde{\psi_{t}}: [0,+\infty)\to\mathcal{E}_{V,0}^{1}$ which is
generated from a holomorphic
vector field in $\mathrm{Lie}(G)$. 
Here we say that $\psi_{t}$ is parallel to $\tilde{\psi_{t}}$ if 
$\sup_{t>0}d_{1}(\psi_{t},\tilde{\psi_{t}})<+\infty$.
\end{enumerate}
\end{definition}


\begin{proposition}\label{bounded of K}
If $(M,\Omega,V)$ is geodesic stable at
$\psi_{0}\in\mathcal{E}_{V,0}^{1}$
with $K_{V}^{h}(\psi_{0})<+\infty$, then the $h$-modified Mabuchi
functional is bounded from below.
\end{proposition}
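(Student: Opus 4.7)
The plan is to argue by contradiction: assuming $K_{V}^{h}$ is not bounded below on $\mathcal{E}_{V,0}^{1}$, I would construct a unit-speed locally finite energy geodesic ray from $\psi_{0}$ that violates both alternatives in the definition of geodesic stability. The first step is to observe that geodesic stability forces the asymptotic slope $\lim_{t\to+\infty}(K_{V}^{h}(\psi_{t})-K_{V}^{h}(\psi_{0}))/t\geqq 0$ for every locally finite energy geodesic ray $\psi_{t}$ starting at $\psi_{0}$, so Lemma~\ref{Kbounded} applies and $K_{V}^{h}$ is $G$-invariant. I would then pick $\{\phi_{i}\}\subset\mathcal{E}_{V,0}^{1}$ with $K_{V}^{h}(\phi_{i})\to -\infty$ and, using $G$-invariance, replace each $\phi_{i}$ by $g_{i}[\phi_{i}]$ where $g_{i}\in G$ nearly minimizes $g\mapsto d_{1}(\psi_{0},g[\phi_{i}])$, so that $d_{1}(\psi_{0},\phi_{i})\leqq d_{1,G}(\psi_{0},\phi_{i})+1/i$; this modification preserves $K_{V}^{h}(\phi_{i})$.

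Next I would separate two cases. If $\{d_{1,G}(\psi_{0},\phi_{i})\}_{i}$ is bounded along a subsequence, then so is $\{d_{1}(\psi_{0},\phi_{i})\}_{i}$, and Proposition~\ref{P2*} produces a $d_{1}$-limit $\phi_{\infty}\in\mathcal{E}_{V,0}^{1}$; $d_{1}$-lower semicontinuity of $K_{V}^{h}$ would force $K_{V}^{h}(\phi_{\infty})=-\infty$, which is impossible since $H\geqq 0$ and $\J_{-\mathrm{Ric}},\J_{V}^{h}$ are finite on $\mathcal{E}_{V,0}^{1}$. Thus I may assume $L_{i}:=d_{1}(\psi_{0},\phi_{i})\to+\infty$ and consider the unit-speed finite energy geodesic segments $\rho_{i}\colon[0,L_{i}]\to\mathcal{E}_{V,0}^{1}$ joining $\psi_{0}$ to $\phi_{i}$. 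Once $K_{V}^{h}(\phi_{i})<K_{V}^{h}(\psi_{0})$, convexity of $K_{V}^{h}$ along $\rho_{i}$ gives $K_{V}^{h}(\rho_{i}(s))\leqq K_{V}^{h}(\psi_{0})$ on $[0,L_{i}]$. Combined with $d_{1}(\psi_{0},\rho_{i}(s))=s$, Proposition~\ref{P2*} yields $d_{1}$-precompactness of $\{\rho_{i}(s)\}_{i}$ for each fixed $s$, and a Cantor diagonal argument as in the proof of Lemma~\ref{Kbounded} extracts a subsequence converging to a unit-speed locally finite energy geodesic ray $\rho_{\infty}\colon[0,+\infty)\to\mathcal{E}_{V,0}^{1}$ starting at $\psi_{0}$, with $K_{V}^{h}(\rho_{\infty}(s))\leqq K_{V}^{h}(\psi_{0})$ by lsc.

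The decisive point is that the near $G$-minimality ensures $\rho_{\infty}$ has unit speed also in the quotient distance $d_{1,G}$: since $G$ acts by $d_{1}$-isometries the quotient $d_{1,G}$ is $1$-Lipschitz with respect to $d_{1}$, and the triangle inequality gives
$$
d_{1,G}(\psi_{0},\rho_{i}(s))\geqq d_{1,G}(\psi_{0},\phi_{i})-d_{1}(\rho_{i}(s),\phi_{i})\geqq (L_{i}-1/i)-(L_{i}-s)=s-1/i,
$$
so passing to the $d_{1}$-limit yields $d_{1,G}(\psi_{0},\rho_{\infty}(s))=s$ for every $s\geqq 0$. Applying geodesic stability at $\rho_{\infty}$, the upper bound $K_{V}^{h}(\rho_{\infty}(t))\leqq K_{V}^{h}(\psi_{0})$ rules out case~(1); case~(2) would provide a ray $\tilde{\psi}_{t}$ in a single $G$-orbit with $\sup_{t}d_{1}(\rho_{\infty}(t),\tilde{\psi}_{t})<+\infty$, whence $d_{1,G}(\psi_{0},\rho_{\infty}(t))\leqq d_{1,G}(\psi_{0},\tilde{\psi}_{0})+\sup_{t}d_{1}(\rho_{\infty}(t),\tilde{\psi}_{t})<+\infty$, contradicting $d_{1,G}(\psi_{0},\rho_{\infty}(t))=t\to+\infty$. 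The main obstacle I expect is the Cantor diagonal extraction of $\rho_{\infty}$ as a genuine locally finite energy geodesic ray in $\mathcal{E}_{V,0}^{1}$; this is the step already worked out in Lemma~\ref{Kbounded} and relies on the uniform $K_{V}^{h}$-upper bound furnished by the convexity estimate together with Proposition~\ref{P2*}.
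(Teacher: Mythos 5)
Your proposal is correct and follows essentially the same route as the paper's proof: contradiction, $G$-invariance via Lemma~\ref{Kbounded}, the dichotomy on boundedness of $d_{1,G}(\psi_{0},\phi_{i})$, compactness plus lower semicontinuity in the bounded case, and in the unbounded case the limiting geodesic ray whose $d_{1,G}$-unboundedness contradicts alternative~(2) of geodesic stability. The only cosmetic difference is that you prove the quotient-distance lower bound $d_{1,G}(\psi_{0},\rho_{i}(s))\geqq s-1/i$ directly by the triangle inequality, where the paper cites \cite[Lemma~7.4]{CC2} for the same estimate.
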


\begin{proof}
We follow the proof of \cite[Proposition~7.3]{CC2}.
In order to imply a contradiction, suppose that there exists a sequence
$\{\tilde{\phi_{i}}\}\subset\mathcal{E}_{V,0}^{1}$ satisfying
 $K_{V}^{h}(\tilde{\phi_{i}})\to-\infty$.
Take $g_{i}\in G:=\mathrm{Aut}_{0}(M,V)$ to define
$\phi_{i}:=g_{i}[\tilde{\phi_{i}}]\in\mathcal{E}_{V,0}^{1}$ with the
property
$d_{1,G}(\psi_{0},\tilde{\phi_{i}})\leqq{d_{1}(\psi_{0},\phi_{i})}
\leqq{d_{1,G}(\psi_{0},\tilde{\phi_{i}})+1}$.
In view of Lemma~\ref{Kbounded}, $K_{V}^{h}(\phi_{i})\to-\infty$.

If $\sup_{i}d_{1}(\psi_{0},\phi_{i})<+\infty$, then the same compactness
argument as in
Proposition~\ref{P2} shows a subsequence $\{\phi_{i_{k}}\}_{k}$
converges to
$\phi_{\infty}\in\mathcal{E}_{V,0}^{1}$ in $d_{1}$-distance.
It follows from the lower semicontinuity of $K_{V}^{h}$ that
$K_{V}^{h}(\phi_{\infty})\leqq
\liminf_{k\to\infty}K_{V}^{h}(\phi_{i_{k}})=-\infty$
which is a contradiction. 

Thus we focus on the case where
$\sup_{i}d_{1}(\psi_{0},\phi_{i})=+\infty$.
We can assume $d_{1}(\psi_{0},\phi_{i})\to+\infty$.
Let $\rho_{i}:[0,d_{1}(\psi_{0},\phi_{i})]\to\mathcal{E}_{V,0}^{1}$ be
the unit speed geodesic
segment connecting $\psi_{0}$ and $\phi_{i}$.
By the convexity of $K_{V}^{h}$ along $\rho_{i}$, we have
$K_{V}^{h}(\rho_{i}(t))\leqq
\mathrm{max}\{K_{V}^{h}(\psi_{0}),K_{V}^{h}(\phi_{i})\}$
for $t\in[0,d_{1}(\psi_{0},\phi_{i})]$.
Since $K_{V}^{h}(\rho_{i}(t))$ is bounded from above for each fixed
$t>0$, the same argument as
in \cite[Lemma~6.5]{CC2} again shows that the subsequence of the
geodesic segments
converges to a unit speed locally finite geodesic ray 
$\rho_{\infty}:[0,+\infty)\to\mathcal{E}_{V,0}^{1}$ starting
at $\psi_{0}$.
By the lower semicontinuity of $K_{V}^{h}$, the functional $K_{V}^{h}$
is still bounded above
on $\rho_{\infty}$.
Then the geodesic stability condition implies that $\rho_{\infty}$ is
parallel to a geodesic ray
generated from a holomorphic vector field.
This concludes $d_{1,G}(\rho_{\infty}(0),\rho_{\infty}(t))$ is bounded
uniformly in $t$.

On the other hand, by \cite[Lemma 7.4]{CC2}, we have
$$
d_{1,G}(\psi_{0},\rho_{i}(t))
\geqq{d_{1}(\psi_{0},\rho_{i}(t))-1}=t-1
$$
for $t\in [0,d_{1}(\psi_{0},\phi_{i})]$ to conclude
$$
d_{1,G}(\psi_{0},\rho_{\infty}(t))
\geqq{d_{1,G}(\psi_{0},\rho_{i}(t))
-d_{1,G}(\rho_{i}(t),\rho_{\infty}(t))}
\geqq{t-1-d_{1}(\rho_{i}(t),\rho_{\infty}(t))}.
$$
Taking $i\to\infty$, we have a contradiction to boundedness of 
$d_{1,G}(\rho_{\infty}(0),\rho_{\infty}(t))$.
\end{proof}

\begin{theorem}\label{geod stability}
Fix $\psi_{0}\in\mathcal{E}_{V,0}^{1}$ with
$K_{V}^{h}(\psi_{0})<+\infty$.
There exists an $h$-modified extremal K\"{a}hler metrics in
$\mathcal{H}_{V,0}$ if and only if
$(M,\Omega,V)$ is geodesic stable at
$\psi_{0}\in\mathcal{E}_{V,0}^{1}$.
\end{theorem}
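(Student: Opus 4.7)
The plan is to prove the two directions separately, leveraging the coercivity characterization (Theorem~\ref{main thm'}) on the existence side and the boundedness from below already established in Proposition~\ref{bounded of K} on the stability side.

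For the forward direction, I would assume that an $h$-modified extremal K\"{a}hler metric exists in $\mathcal{H}_{V,0}$ and apply Theorem~\ref{main thm'} to conclude that $K_V^h$ is coercive. Fix any unit-speed locally finite energy geodesic ray $\psi_t:[0,+\infty)\to\mathcal{E}_{V,0}^1$ starting at $\psi_0$. Convexity of $K_V^h$ along $\psi_t$ ensures that the slope $\ell := \lim_{t\to+\infty}(K_V^h(\psi_t)-K_V^h(\psi_0))/t$ exists in $\mathbb{R}\cup\{+\infty\}$, and the uniform lower bound from coercivity forces $\ell\geqq 0$. If $\ell>0$, alternative~(1) holds. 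If $\ell=0$, the monotonicity of slopes gives $K_V^h(\psi_t)\leqq K_V^h(\psi_0)$ for every $t$, and coercivity then bounds $d_{1,G}(\psi_0,\psi_t)$ uniformly. The remaining task is to upgrade this boundedness to alternative~(2): select $\sigma_t\in G$ nearly realizing the $d_{1,G}$-distance, observe that $\{\sigma_t[\psi_t]\}$ lies in a bounded $d_1$-set, and adapt the arguments of \cite[Section~7]{CC2} and \cite[Section~6]{He19} to the $h$-modified setting to produce $X\in\mathrm{Lie}(G)$ generating a ray parallel to $\psi_t$.

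For the backward direction, I would assume geodesic stability at $\psi_0$. Proposition~\ref{bounded of K} gives that $K_V^h$ is bounded below on $\mathcal{E}_{V,0}^1$. Let $\{\tilde\phi_i\}$ be a minimizing sequence and choose $g_i\in G$ so that $\phi_i:=g_i[\tilde\phi_i]\in\mathcal{E}_{V,0}^1$ satisfies $d_1(\psi_0,\phi_i)\leqq d_{1,G}(\psi_0,\tilde\phi_i)+1$; by the $G$-invariance of $K_V^h$ furnished by Lemma~\ref{Kbounded}, $\{\phi_i\}$ is still minimizing. If $\sup_i d_1(\psi_0,\phi_i)<+\infty$, Proposition~\ref{P2} yields a minimizer in $\mathcal{E}_{V,0}^1$ and Theorem~\ref{regularity} promotes it to a smooth $h$-modified extremal K\"{a}hler metric, which is what we want. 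Otherwise, pass to a subsequence with $d_1(\psi_0,\phi_i)\to +\infty$ and connect $\psi_0$ to $\phi_i$ by the unit-speed finite energy geodesic $\rho_i:[0,d_1(\psi_0,\phi_i)]\to\mathcal{E}_{V,0}^1$. Convexity of $K_V^h$ along $\rho_i$ combined with the minimizing property gives $K_V^h(\rho_i(t))\leqq \max\{K_V^h(\psi_0),K_V^h(\phi_i)\}$, uniformly bounded above in $i$ for each fixed $t\geqq 0$. A Cantor diagonal extraction (as in the proof of Proposition~\ref{bounded of K} together with \cite[Theorem~5.6]{DR17}) then yields a subsequential limit unit-speed locally finite energy geodesic ray $\rho_\infty$ starting at $\psi_0$. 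By the $d_1$-lower semicontinuity of $K_V^h$,
\begin{equation*}
\lim_{t\to+\infty}\frac{K_V^h(\rho_\infty(t))-K_V^h(\psi_0)}{t}\leqq 0.
\end{equation*}
Geodesic stability thus forces alternative~(2): $\rho_\infty$ is parallel to a ray generated by some $X\in\mathrm{Lie}(G)$, so $\sup_t d_{1,G}(\psi_0,\rho_\infty(t))<+\infty$. On the other hand, \cite[Lemma~7.4]{CC2} yields $d_{1,G}(\psi_0,\rho_i(t))\geqq t-1$ for $t\in[0,d_1(\psi_0,\phi_i)]$, and the triangle inequality $d_{1,G}(\psi_0,\rho_\infty(t))\geqq d_{1,G}(\psi_0,\rho_i(t))-d_1(\rho_i(t),\rho_\infty(t))$ lets us pass to the limit $d_{1,G}(\psi_0,\rho_\infty(t))\geqq t-1$, contradicting boundedness.

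The main obstacle is the parallel-ray identification step in the forward direction: converting a uniform bound on $d_{1,G}(\psi_0,\psi_t)$ into the existence of an explicit one-parameter subgroup of $G$ whose orbit ray remains within bounded $d_1$-distance of $\psi_t$. This is the technical core of the corresponding steps in \cite[Section~7]{CC2}, and I expect the $h$-modified version to proceed by transcription, invoking the compactness result \cite[Theorem~5.6]{DR17}, the $G$-invariance of $K_V^h$ (Lemma~\ref{Kbounded}), and the convexity properties along $C^{1,1}$-geodesics already catalogued in Section~\ref{Preliminary}.
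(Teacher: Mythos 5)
Your backward (``if'') direction is essentially sound and in fact slightly streamlines the paper's route: the paper runs the continuity path \eqref{eq-t} and invokes Proposition~\ref{G-orbit} in the bounded case, whereas you work with an abstract minimizing sequence and appeal to Proposition~\ref{P2} and Theorem~\ref{regularity}; the contradiction in the unbounded case is the same one carried out in Proposition~\ref{bounded of K}, and your use of Lemma~\ref{Kbounded} to secure $G$-invariance before replacing $\tilde{\phi}_{i}$ by $g_{i}[\tilde{\phi}_{i}]$ is the right move.

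The forward (``only if'') direction, however, has a genuine gap exactly at the point you flag as ``the main obstacle''. When the slope along $\psi_{t}$ vanishes, you derive a uniform bound on $d_{1,G}(\psi_{0},\psi_{t})$ from coercivity and then propose to select $\sigma_{t}\in G$ nearly realizing that distance and to extract a vector field in $\mathrm{Lie}(G)$ from the bounded family $\{\sigma_{t}[\psi_{t}]\}$. This is not how the argument goes, and it is not clear it can be made to work: $G$ is in general non-compact, the elements $\sigma_{t}$ need not converge or assemble into a one-parameter subgroup, and $d_{1}$-boundedness of $\{\sigma_{t}[\psi_{t}]\}$ by itself does not produce an $X\in\mathrm{Lie}(G)$. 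The mechanism actually used (both here and in \cite[pp.\ 992--993]{CC2}) is different: one connects the given $h$-modified extremal metric --- a minimizer of $K_{V}^{h}$ --- to the points $\rho(t_{k})$ by unit-speed finite energy geodesic segments $r_{k}$; convexity together with the minimizing property squeezes $K_{V}^{h}(r_{k}(t))$ between $\inf_{\mathcal{E}_{V,0}^{1}}K_{V}^{h}$ and a quantity tending to that infimum as $k\to\infty$, so the Cantor-diagonal limit ray $r_{\infty}$ consists entirely of minimizers; Theorem~\ref{regularity} makes each $r_{\infty}(t)$ a smooth $h$-modified extremal metric, the uniqueness Theorem~\ref{BanMab} then forces the whole ray to lie in a single $\mathrm{Aut}_{0}(M,V)$-orbit, i.e.\ to be generated by a holomorphic vector field, and \cite[Lemma~7.6]{CC2} gives that $\rho$ and $r_{\infty}$ are parallel. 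The appeal to the Bando--Mabuchi-type uniqueness theorem applied to a limit ray of minimizers is the key idea missing from your outline; without it the dichotomy in the definition of geodesic stability is not established.
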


\begin{proof}
In order to show the ``only if'' part, we follow an argument in
\cite[pp.\ 992--993]{CC2}.
Assume that the reference metric $\omega_{0}$ is an $h$-modified
extremal K\"{a}hler metric.
By Theorem~\ref{main thm'}, the functional $K_{V}^{h}$ is coercive.

Let $\rho:[0,+\infty)\to\mathcal{E}_{V,0}^{1}$ be a unit speed geodesic
ray starting at $\psi_{0}$.
If $K_{V}^{h}$ is unbounded from above along $\rho$, then the convexity
of $K_{V}^{h}$ shows
$\lim_{t\to+\infty}K_{V}^{h}(\rho(t))/t>0$.
Thus we argue that $\rho$ is parallel to a geodesic ray
coming from a holomorphic vector field, provided that $K_{V}^{h}$ is
bounded from above on $\rho$.

Let $\{t_{k}\}_{k}\subset\mathbb{R}$ be a sequence such that $t_{k}>0$
and $t_{k}\to+\infty$.
Let $r_{k} : [0, d_{1}(0,\rho(t_{k}))]]\to\mathcal{E}_{V,0}^{1}$ be the
unit speed finite
energy geodesic segment connecting $0$ and $\rho(t_{k})$.
We use the fact that the convexity of $K_{V}^{h}$ along $r_{k}$ and that
a $h$-modified extremal K\"{a}hler metric minimizes $K_{V}^{h}$ to
conclude
for any $t\in [0,d_{1}(0,\rho(t_{k}))]$,
$$
K_{V}^{h}(r_{k}(t))\leqq\Big(1-\frac{t}{d_{1}(0,\rho(t_{k}))}\Big) 
\inf_{\mathcal{E}_{V,0}^{1}}K_{V}^{h}
+\frac{t}{d_{1}(0,\rho(t_{k}))}\sup_{t\geqq{0}}K_{V}^{h}(\rho(t)).
$$
Since $K_{V}^{h}$ is uniformly bounded from above, we can apply the same
argument
as in \cite[Lemma~6.5]{CC2} to show that
for a subsequence $\{k_{l}\}_{l}$ and for any $t\geqq{0}$, the ray
$r_{k_{l}}(t)$ converges to
a locally finite energy geodesic ray $r_{\infty}(t)$ with unit speed.
Then by the above inequality and the lower semicontinuity of
$K_{V}^{h}$, we have
$$
K_{V}^{h}(r_{\infty}(t))\leqq
\liminf_{k_{l}}K_{V}^{h}(r_{k_{l}}(t))\leqq
\inf_{\mathcal{E}_{V,0}^{1}}K_{V}^{h},
$$
for any $t>0$.
Since $r_{\infty}(t)$ is thus an $h$-modified extremal K\"{a}hler metric
for any $t\geqq{0}$,
Theorem~\ref{BanMab} shows the ray $r_{\infty}$ is generated by a
holomorphic vector field.
Moreover by \cite[Lemma~7.6]{CC2}, we know that $\rho$ and $r_{\infty}$
are parallel.
This completes the proof of the ``only if'' part.

In order to show the ``if'' part,
we follow an argument in \cite[p. 992]{CC2}.
Recall the continuity path \eqref{eq-t} to construct an $h$-modified
extremal K\"{a}hler metric.
By Proposition~\ref{bounded of K} and a argument in the proof of
Lemma~\ref{existence of h},
there exists a solution $\tilde{\phi}_{t}$ of the equation \eqref{eq-t}
for any $t<1$.
Take a sequence $t_{i}\nearrow 1$ and define $\tilde{\phi}_{i}$ as the
solution of
\eqref{eq-t} at $t=t_{i}$.

If $\sup_{i}d_{1,G}(\psi_{0},\tilde{\phi}_{i})<+\infty$,
then Proposition~\ref{G-orbit} stated below and Lemma~\ref{Kbounded}
show
the existence of an $h$-modified extremal K\"{a}hler metric.

We consider the case where
$\sup_{i}d_{1,G}(\psi_{0},\tilde{\phi}_{i})=+\infty$ 
to show contradiction.
We follow the same argument as
the proof of Proposition~\ref{bounded of K}.
Take $g_{i}\in G$ such that $\phi_{i}:=g_{t}[\tilde{\phi}_{i}]$
satisfies
$d_{1,G}(\psi_{0},\tilde{\phi}_{i})
\leqq{d_{1}(\psi_{0},\phi_{i})}\leqq
d_{1,G}(\psi_{0},\tilde{\phi}_{i})+1$.
Since an argument in the proof of Lemma~\ref{existence of h} shows
the functional $K_{V}^{h}$ is bounded along the continuity path
$\eqref{eq-t}$,
$\sup_{i} K_{V}^{h}(\tilde{\phi}_{i})<+\infty$.
By Lemma~\ref{Kbounded}, $\sup_{i}K_{V}^{h}(\phi_{i})<+\infty$.
By the same argument as the proof of Proposition~\ref{bounded of K},
the unit speed geodesic segment $\rho_{i}$ connecting $\psi_{0}$ and
$\phi_{i}$ yields
the limiting geodesic ray $\rho_{\infty}$ starting at $\psi_{0}$.
The geodesic ray $\rho_{\infty}$ decreases $K_{V}^{h}$
since each geodesic ray $\rho_{i}$ decreases $\tilde{K}_{t}$
in Lemma~\ref{existence of h}.
Since $(M,\Omega,V)$ is geodesic stable,
$\rho_{\infty}$ is parallel to a geodesic ray
coming from a holomorphic vector field.
Thus $\rho_{\infty}$ is $d_{1,G}$-bounded.
On the other hand, an argument in the proof
of Proposition~\ref{bounded of K} shows
$\rho_{\infty}$ is not $d_{1,G}$-bounded.
Therefore the situation where
$\sup_{i}d_{1,G}(\psi_{0},\tilde{\phi}_{i})=+\infty$ does not occur.
This completes the proof.
\end{proof}

\begin{proposition}\label{G-orbit}
Suppose the $h$-modified Mabuchi functional $K_{V}^{h}$ is bounded from
below.
Take a sequence $t_{i}\nearrow 1$ and let $\tilde{\phi}_{i}$ be a
solution
the continuity path \eqref{eq-t} at $t=t_{i}$ satisfying
$\tilde{\phi_{i}}\in\mathcal{H}_{V,0}$.
Suppose also $\sup_{i}d_{1,G}(0,\tilde{\phi_{i}})<+\infty$.
Let $\phi_{i}\in\mathcal{H}_{V,0}$ be in the $G$-orbit of
$\tilde{\phi_{i}}$ satisfying
$\sup_{i}d_{1}(0,\phi_{i})<+\infty$.
Then a subsequence of $\{\phi_{i}\}_{i}$ converges to a smooth
$h$-modified extremal
K\"{a}hler metric in $C^{1,\alpha}$-topology for any $0<\alpha<1$.
\end{proposition}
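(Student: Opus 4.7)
The plan is to promote $d_{1}$-boundedness to smooth compactness via Chen--Cheng's \emph{a priori} estimates \cite[Theorem~1.2]{CC1}, extract a subsequence convergent in $C^{1,\alpha}$, and verify that the limit satisfies the $h$-modified extremal K\"ahler equation.

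First, I would upgrade the hypothesis that $K_{V}^{h}$ is bounded from below to the stronger statement that $K_{V}^{h}$ is $G$-invariant. Applied to the trivial (constant) geodesic ray through $\tilde{\phi}_{i}$, Lemma~\ref{Kbounded} (whose proof is an independent argument not requiring Theorem~\ref{geod stability}) shows that $K_{V}^{h}(g[\phi]) = K_{V}^{h}(\phi)$ for all $g\in G$ and $\phi\in\mathcal{H}_{V,0}$. In particular $K_{V}^{h}(\phi_{i})=K_{V}^{h}(\tilde{\phi}_{i})$. From the argument in the proof of Lemma~\ref{existence of h} (comparing $\tilde{K}_{t_{i}}(\tilde{\phi}_{i})$ with $\tilde{K}_{t_{i}}(0)$ using the minimization property of $\tilde{\phi}_{i}$), one gets $\sup_{i}K_{V}^{h}(\tilde{\phi}_{i}) < +\infty$, hence $\sup_{i}K_{V}^{h}(\phi_{i})<+\infty$.

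Second, I would derive a uniform entropy bound. Using the decomposition $K_{V}^{h}=H+\mathbb{J}_{-\mathrm{Ric}}+\mathbb{J}_{V}^{h}$ together with the standard bounds $|\mathbb{J}_{-\mathrm{Ric}}(\phi)|,|\mathbb{J}_{V}^{h}(\phi)|\leq C\,d_{1}(0,\phi)$ (the latter is Lemma~\ref{Jaffine} combined with the $L^{\infty}$-bound on $\theta_{V}^{(\phi)}$), the hypothesis $\sup_{i}d_{1}(0,\phi_{i})<+\infty$ together with the previous step yields $\sup_{i}H(\phi_{i})<+\infty$.

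Third, I would write the equation for $\phi_{i}$. Pulling back \eqref{eq-t} for $\tilde{\phi}_{i}$ by $g_{i}$, the potential $\phi_{i}\in\mathcal{H}_{V,0}$ satisfies
$$
t_{i}\bigl(S(\omega_{\phi_{i}})-\underline{S}-h(\theta_{V}^{(\phi_{i})})\bigr) = (1-t_{i})\bigl(\Lambda_{\omega_{\phi_{i}}}(g_{i}^{*}\omega_{0})-n\bigr).
$$
With the uniform entropy bound, the uniform $L^{\infty}$-bound on $\theta_{V}^{(\phi_{i})}$, and the uniform $d_{1}$-bound, Chen--Cheng's \emph{a priori} estimate \cite[Theorem~1.2]{CC1} supplies uniform $C^{k,\alpha}$ bounds on $\phi_{i}$. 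Arzel\`a--Ascoli then extracts a subsequence converging in $C^{1,\alpha}$ to some $\phi_{\infty}\in\mathcal{H}_{V,0}$. Taking $t_{i}\nearrow 1$ in the displayed equation (and noting that the right-hand side is integrally small once the estimates are in place), the limit satisfies $S(\omega_{\phi_{\infty}})-\underline{S}-h(\theta_{V}^{(\phi_{\infty})})=0$, so $\phi_{\infty}$ is a smooth $h$-modified extremal K\"ahler metric.

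The main obstacle is controlling the twisting term $(1-t_{i})(\Lambda_{\omega_{\phi_{i}}}(g_{i}^{*}\omega_{0})-n)$ uniformly, because $g_{i}^{*}\omega_{0}$ need not be uniformly bounded. To handle this, I would first use the $d_{1}$-compactness of \cite[Theorem~5.6]{DR17} to pass to a $d_{1}$-limit $\phi_{\infty}\in\mathcal{E}_{V,0}^{1}$, then exploit the approximate-minimization property of $\tilde{\phi}_{i}$ for $\tilde{K}_{t_{i}}$ together with the $G$-invariance of $K_{V}^{h}$ to identify $\phi_{\infty}$ as a minimizer of $K_{V}^{h}$ on $\mathcal{E}_{V,0}^{1}$, and finally invoke the regularity result Theorem~\ref{regularity} to conclude that $\phi_{\infty}$ is smooth. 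The $C^{1,\alpha}$ convergence of $\{\phi_{i}\}$ to this $\phi_{\infty}$ then follows from the a priori estimates combined with uniqueness of the $d_{1}$-limit.
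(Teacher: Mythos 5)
Your first two steps (using Lemma~\ref{Kbounded} to get $G$-invariance and hence $\sup_{i}K_{V}^{h}(\phi_{i})<+\infty$, then extracting the uniform entropy bound from the $d_{1}$-control of $\mathbb{J}_{-\mathrm{Ric}}$ and $\mathbb{J}_{V}^{h}$) match the paper's argument. The gap is in the second half. You correctly identify the central difficulty --- the twist $(1-t_{i})\Lambda_{\omega_{\phi_{i}}}(g_{i}^{*}\omega_{0})$ is not uniformly bounded because $g_{i}$ may diverge in $G$ --- but you do not resolve it. The paper's resolution is specific: writing $g_{i}^{*}\omega_{0}=\omega_{0}+\deldel h_{i}$ with $\sup_{M}h_{i}=0$ and $f_{i}=\frac{1-t_{i}}{t_{i}}h_{i}$, one invokes \cite[Lemma~4.20]{CC2} (an $\alpha$-invariant--type bound: since $\frac{1-t_{i}}{t_{i}}\to 0$, $\|e^{-f_{i}}\|_{L^{p}(\omega_{0}^{n})}\leqq C$ for $t_{i}$ close to $1$) and then \cite[Proposition~4.21]{CC2}, which is tailored to the twisted equation and yields uniform bounds on $\|F_{i}+f_{i}\|_{W^{1,2p}}$ and $\|n+\square_{\omega_{0}}\phi_{i}\|_{L^{p}}$. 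A direct appeal to \cite[Theorem~1.2]{CC1}, as in your third step, does not apply here precisely because the twist is not uniformly controlled; this is the missing idea.

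Your fallback route (pass to a $d_{1}$-limit, identify it as a minimizer of $K_{V}^{h}$ via the approximate-minimization property, and invoke Theorem~\ref{regularity}) is not circular and does yield the \emph{existence} of a smooth $h$-modified extremal metric, which is all that the application in Theorem~\ref{geod stability} actually needs. But it does not prove the statement as asserted: the proposition claims $C^{1,\alpha}$-convergence of a subsequence of $\{\phi_{i}\}$ itself, and your closing sentence derives this from ``the a priori estimates'' --- exactly the estimates you were unable to establish, so the argument is circular at that point. Two further points need care even in the direct approach: $C^{1,\alpha}$-convergence alone does not show the limit is a K\"{a}hler potential (the paper gets $C^{-1}\omega_{0}\leqq\omega_{\phi_{*}}\leqq C\omega_{0}$ from \cite[Theorem~1.1]{CH12}), and passing to the limit in the fourth-order scalar curvature equation requires the weak (integrated-against-test-functions) formulation rather than ``taking $t_{i}\nearrow 1$ in the displayed equation.''
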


\begin{proof}
We follow an argument in \cite[pp.\ 973--978]{CC2}.
Recall that $\tilde{\phi}_{i}$ is a minimizer of the functional
$\tilde{K}_{t_{i}}:=t_{i}K_{V}^{h}+(1-t_{i})\mathbb{J}$.
When $t_{i}\to 1$, we observe
\begin{equation}\label{infK}
\tilde{K}_{t_{i}}(\tilde{\phi}_{i})=
\inf_{\mathcal{H}_{V}}\tilde{K}_{t_{i}}
\to\inf_{\mathcal{H}_{V}}K_{V}^{h},
\quad
K_{V}^{h}(\tilde{\phi}_{i})\to\inf_{\mathcal{H}_{V}}K_{V}^{h}
\quad\text{and}\quad
(1-t_{i})\mathbb{J}(\tilde{\phi}_{i})\to{0}
\end{equation}
(cf. \cite[Lemma~4.15]{CC2}).
Indeed for any $\e>0$, we can take $\phi^{\e}\in\mathcal{H}_{V}$
satisfying
$K_{V}^{h}(\phi^{\e})\leqq\inf_{\mathcal{H}_{V}}K_{V}^{h}+\e$ to obtain
$$
\tilde{K}_{t_{i}}(\tilde{\phi}_{i})
\leqq{t_{i}K_{V}^{h}(\phi^{\e})+(1-t_{i})\mathbb{J}(\phi^{\e})}.
$$
This yields
$$
\limsup_{i\to\infty}
\tilde{K}_{t_{i}}(\tilde{\phi}_{i})\leqq{K_{V}^{h}(\phi^{\e})}
\leqq\inf_{\mathcal{H}_{V}}K_{V}^{h}+\e.
$$
Since $\mathbb{J}\geqq{0}$, together with the above inequality, we have
$$
t_{i}\inf_{\mathcal{H}_{V}}K_{V}^{h}
\leqq{t_{i}K_{V}^{h}(\tilde{\phi}_{i})}
\leqq{K_{t_{i}}(\tilde{\phi}_{i})}
\leqq\inf_{\mathcal{H}_{V}}K_{V}^{h}+\e
$$
which shows \eqref{infK}.

We next observe
\begin{equation}\label{Hphi}
\sup_{i}H(\phi_{i})=
\sup_{i}\int_{M}\log\frac{\omega_{\phi_{i}}^{n}}{\omega_{0}^{n}}
\omega_{\phi_{i}}^{n}<+\infty
\end{equation}
(cf. \cite[Lemma~4.18]{CC2}).
Indeed by Lemma~\ref{Kbounded},
$\sup_{i}K_{V}^{h}(\tilde{\phi}_{i})=\sup_{i}K_{V}^{h}(\phi_{i})$
which is finite by the inequality \eqref{infK}.
Since the functionals $\mathbb{J}_{-\mathrm{Ric}}$ and
$\mathbb{J}_{V}^{h}$ in the definition
of $K_{V}^{h}$ are controlled by the $d_{1}$-distance, then the estimate
\eqref{Hphi} follows.

Let $g_{i}\in\mathrm{Aut}_{0}(M,V)$ be the automorphism such that
$\phi_{i}=g_{i}[\tilde{\phi}_{i}]$.
Define $\omega_{i}=g_{i}^{*}\omega_{0}=\omega_{0}+\deldel h_{i}$ with
$\sup_{M}h_{i}=0$.
Also define
$\theta_{i}=g_{i}^{*}\theta_{V}^{(\tilde{\phi}_{i})}
=\theta_{V}^{(\phi_{i})}$.
A direct computation shows that $\phi_{i}$ satisfies
\begin{equation}\label{eq-phi}
\omega_{\phi_{i}}^{n}=e^{F_{i}}\omega_{0}^{n}
\quad\text{and}\quad
\square_{\omega_{\phi_{i}}}F_{i}=
\Big(\underline{S}-\frac{1-t_{i}}{t_{i}}n+h(\theta_{i})\Big)+
\Lambda_{\omega_{\phi_{i}}}\Big(\mathrm{Ric}(\omega_{0})
-\frac{1-t_{i}}{t_{i}}\omega_{i}\Big)
\end{equation}
(cf. \cite[Lemma~4.14]{CC2}).

We are now in position to prove that
a subsequence of $\{\phi_{i}\}_{i}$ converges to a smooth $h$-modified
extremal K\"{a}hler 
metric in $C^{1,\alpha}$-topology for any $0<\alpha<1$.
Our equation \eqref{eq-t} is different from Chen-Cheng's one by the term
$h(\theta_{i})$.
Since $\|h(\theta_{i})\|_{L^{\infty}}$ is uniformly bounded,
we can apply estimates established by Chen-Cheng \cite{CC2}.
Define $$f_{i}=\frac{1-t_{i}}{t_{i}}h_{i}.$$
By \cite[Lemma~4.20]{CC2}, there exists a constant $C=C(\omega_{0})>0$
such that
for any $p>1$, there exists $\e_{p}>0$ satisfying 
$\| e^{-f_{i}} \|_{L^{p}(\omega_{0}^{n})}\leqq{C}$ when
$t_{i}\in(1-\e_{p},1)$
By \cite[Proposition~4.21]{CC2} (see also \cite[Theorem~3.3]{CC2}),
there exists a constant $C=C(p,\omega_{0}, \sup_{i}H(\phi_{i}))>0$ such
that
$$
\|F_{i}+f_{i}\|_{W^{1,2p}}+\|n+
\square_{\omega_{0}}\phi_{i}\|_{L^{p}(\omega_{0}^{n})}\leqq{C}.
$$
By taking a subsequence, we get $\mathrm{Im}V$-invariant functions
$\phi_{*}\in W^{2,p}$ and $F_{*}\in W^{1,p}$ for any $1<p<+\infty$
such that
\begin{itemize}
\item $\phi_{i}\to\phi_{*}$ in $C^{1,\alpha}$ for any $0<\alpha<1$
and $\deldel\phi_{i}\to\deldel\phi_{*}$ weakly in $L^{p}$,
\item $F_{i}+f_{i}\to F_{*}$ in $C^{\alpha}$ for any $0<\alpha<1$
and $\nabla(F_{i}+f_{i})\to\nabla F_{*}$ weakly in $L^{p}$,
\item $\omega_{\phi_{i}}^{k}\to\omega_{\phi_{*}}^{k}$ 
weakly in $L^{p}$ for any $1\leqq{k}\leqq{n}$.
\end{itemize}
An argument in \cite[Proof of Proposition~4.23]{CC2} shows that
$\phi_{*}$ is
a weak solution in the following sense:
$$
\omega_{\phi_{*}}^{n}=e^{F_{*}}\omega_{0}^{n},
$$
and for any $u\in{C^{\infty}(M;\mathbb{R})}$,
$$
-\int_{M}d^{c}
F_{*}\wedge{du}\wedge\frac{\omega_{\phi_{*}}^{n-1}}{(n-1)!}
=-\int_{M}u\Big(\underline{S}+h(\theta_{V}^{(\phi_{*})})\Big)
\frac{\omega_{\phi_{*}}^{n}}{n!}
+\int_{M}u\mathrm{Ric}(\omega_{0})\wedge\frac{\omega_{\phi_{*}}^{n}}{n!}
$$
where $d^{c}:=\frac{\sqrt{-1}}{2}(\partial-\overline{\partial})$.
By \cite[Theorem~1.1]{CH12}, there exists a constant $C>1$ such that
$C^{-1}\omega_{0}\leqq\omega_{\phi_{*}}\leqq{C\omega_{0}}$ and
$\omega_{\phi_{*}}\in W^{3,p}$.
Therefore the standard elliptic regularity argument shows $\phi_{*}$ is
a smooth $h$-modified
extremal K\"{a}hler metric. This completes the proof.
\end{proof}

\section{Existence for a $\sigma$-soliton
and a $\sigma$-extremal K\"{a}hler metrics}\label{sigma-soliton}

In this section we work on a Fano manifold $M$ with
$\Omega=2\pi{c_{1}(M)}$
to prove Theorem~\ref{2nd thm}. Let $G:=\mathrm{Aut}_{0}(M,V)$.

\begin{theorem}\label{3rd thm}
There exists a $\sigma$-soliton in $2\pi c_{1}(M)$ if and only if
the $\sigma$-Ding functional is 
$G$-invariant and coercive.
Moreover the condition of $G$-invariance can be removed.
\end{theorem}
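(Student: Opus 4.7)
The plan is to apply the Darvas--Rubinstein existence/properness principle \cite[Theorem~4.7]{Dar19} to the data $\mathcal{R}=\mathcal{H}_{V,0}$, $d=d_{1}$, $F=D_{V}^{h}$ (with $h(s)=1-e^{-\sigma(s)}$), and $G=\mathrm{Aut}_{0}(M,V)$, in complete parallel with the proof of Theorem~\ref{main thm'} but with the $\sigma$-Mabuchi functional $K_{V}^{h}$ replaced by the $\sigma$-Ding functional $D_{V}^{h}=-E_{V}^{h}+L$. It suffices to verify the six properties (P1)--(P6) of \cite[Section~4.2]{Dar19} for this substitution. The axioms (P1), (P4), (P6) transfer essentially verbatim: (P1) is the completion statement for $(\mathcal{E}_{V,0}^{1},d_{1})$; (P4) is \cite[Lemma~5.9]{DR17}; and (P6) is the cocycle identity, immediate from the corresponding identity for $E_{V}^{h}$ together with the fact that $L$ is defined by a single integral. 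The $d_{1}$-lower semicontinuity of $D_{V}^{h}$ needed to apply the principle follows from the Berman--Boucksom--Eyssidieux--Guedj--Zeriahi $d_{1}$-lsc extension of $L$ combined with the $d_{1}$-continuity of $E_{V}^{h}$, established as in Lemma~6.10 of \cite{AJL23} using the uniform bound $|\theta_{V}^{(\phi)}|\le C$ of \cite{Zh00}.

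For (P2), the compactness of minimizing sequences, fix $\{\phi_{i}\}\subset\mathcal{E}_{V,0}^{1}$ with $\sup_{i}d_{1}(0,\phi_{i})<\infty$ and $D_{V}^{h}(\phi_{i})\to\inf_{\mathcal{E}_{V,0}^{1}}D_{V}^{h}$. Since $E_{V}^{h}$ is $d_{1}$-controlled, the values $L(\phi_{i})$ are uniformly bounded, and the Darvas-type compactness \cite[Theorem~5.6]{DR17} used in Proposition~\ref{P2} extracts a $d_{1}$-convergent subsequence whose limit minimizes $D_{V}^{h}$ by lower semicontinuity. For (P5), the Bando--Mabuchi type uniqueness of $\sigma$-solitons modulo $G$, we adapt the Berman--Berndtsson strategy of Theorem~\ref{BanMab}: convexity of $L$ along $C^{1,1}$-geodesics (Berndtsson) together with the affineness of $E_{V}^{h}$ (Lemma~\ref{Jaffine}) makes $D_{V}^{h}$ convex along geodesics, and perturbing $D_{V}^{h}$ by $sF_{\mu}$ and solving the linearized equation at the minimizer on a $\Gamma_{\omega_{0},V}$-orbit produces a parallel deformation, implying $\omega_{\phi_{0}}=g^{*}\omega_{\phi_{1}}$ for some $g\in G$.

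The principal obstacle will be (P3), the regularity of a finite-energy minimizer $\phi_{*}\in\mathcal{E}_{V,0}^{1}$ of $D_{V}^{h}$: by the Euler--Lagrange identity $\delta D_{V}^{h}=-\mathrm{Vol}^{-1}\int_{M}\delta\phi(1-h(\theta_{V}^{(\phi)})-e^{\rho_{\phi}})\omega_{\phi}^{n}$ recorded in Section~\ref{Preliminary}, the minimizer $\phi_{*}$ is a weak pluripotential solution of the Monge--Amp\`ere equation characterizing $\sigma$-solitons. Using the uniform bound $|\theta_{V}^{(\phi_{*})}|\le C$, the resulting density lies in every $L^{p}$ up to a bounded multiplicative factor; Ko\l{}odziej's $L^{\infty}$ estimate then gives $\phi_{*}\in L^{\infty}$, the $C^{2,\alpha}$ regularity of \cite[Theorem~1.1]{CH12} promotes it to $C^{2,\alpha}$, and elliptic bootstrap concludes $\phi_{*}\in C^{\infty}$. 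Finally, removal of the $G$-invariance hypothesis proceeds exactly as in the closing paragraph of the proof of Theorem~\ref{main thm'}: coercivity implies $D_{V}^{h}$ is bounded below, and for the one-parameter flow $g_{t}\subset G$ generated by any real part $X_{\mathrm{Re}}$, the derivative $\tfrac{d}{dt}D_{V}^{h}(g_{t}[\phi])$ is independent of $t$ and hence must vanish, forcing $G$-invariance.
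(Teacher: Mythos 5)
Your overall framework (Darvas--Rubinstein applied to $D_{V}^{h}$ on $\mathcal{H}_{V,0}$), the treatment of (P1), (P4), (P6), and the removal of $G$-invariance all match the paper. Your (P5) takes a different but legitimate route: the paper simply invokes (P3) together with Mabuchi's uniqueness theorem \cite[Theorem~C]{Ma03}, whereas you adapt the Berman--Berndtsson perturbation argument of Theorem~\ref{BanMab}; for the Ding functional this is plausible since Berndtsson's convexity of $L$ applies directly. However, there are two genuine gaps. First, your (P2) is wrong as stated: the compactness result \cite[Theorem~5.6]{DR17} used in Proposition~\ref{P2} requires a uniform \emph{entropy} bound, which for the Mabuchi functional comes from $K_{V}^{h}=H+\J_{-\mathrm{Ric}}+\J_{V}^{h}$ with the last two terms $d_{1}$-controlled. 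The Ding functional contains no entropy term, so a $D_{V}^{h}$-minimizing, $d_{1}$-bounded sequence need not have bounded entropy, and $d_{1}$-bounded sets are not $d_{1}$-precompact. This is precisely why the paper routes (P2) through the monotonicity of $E_{V}^{h}$ (Lemma~\ref{Jaffine}) and \cite[Lemmas~5.15, 5.20, 5.29]{DR17}, which exploit weak compactness and upper semicontinuity of $E_{V}^{h}$ rather than entropy.

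Second, your (P3) bootstrap does not go through. The Euler--Lagrange equation for a minimizer of $D_{V}^{h}$ is the Monge--Amp\`ere equation
$\omega_{\phi}^{n}=e^{\rho_{0}-\phi-\sigma(\theta_{V}^{(\phi)})+c}\omega_{0}^{n}$,
and since $\theta_{V}^{(\phi)}=\theta_{V}^{(0)}+V(\phi)$, the density depends on the \emph{gradient} of the unknown. Even granting Ko\l{}odziej's estimate to get $\phi_{*}\in L^{\infty}$, you cannot then feed the density into \cite[Theorem~1.1]{CH12} or a standard elliptic bootstrap, because boundedness of $\phi_{*}$ gives no control on $V(\phi_{*})$ (the uniform bound $|\theta_{V}^{(\phi)}|\leqq{C}$ of \cite{Zh00} is proved for smooth potentials in $\mathcal{H}_{V}$ and does not automatically extend to the weak minimizer). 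This gradient dependence is exactly the extra difficulty of solitons and multiplier Hermitian metrics over the K\"{a}hler--Einstein case, and the paper resolves it by citing the flow-based regularity theorem of Saito--Takahashi \cite[Theorem~1.1]{SaTa} (built on \cite{SoTi17, SzTo11}) rather than by elliptic bootstrapping.
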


\begin{proof}
We check that the data 
$\mathcal{R}=\mathcal{H}_{V,0}$, $d=d_{1}$, $F=D_{V}^{h}$ with
$h(s)=1-e^{-\sigma(s)}$ and
$G=\mathrm{Aut_{0}}(M,V)$
enjoy the properties (P1)--(P6) in \cite[Section 4.2]{Dar19} to apply
\cite[Theorem~4.7]{Dar19}.

\begin{description}
\item[(P1)] This is due to \cite{Bern15} and Lemma~\ref{Jaffine}.
\item[(P2)] This can be proved by applying the monotonicity of
$E_{V}^{h}$ with $h(s)=1-e^{-\sigma(s)}$ 
stated in Lemma~\ref{Jaffine} and
\cite[Lemmas~5.15, 5.20 and 5.29]{DR17}
where we replace $\mathrm{AM}_{X}$ and $F^{X}$ in \cite{DR17} by 
$E_{V}^{h}$ and $D_{V}^{h}$ with $h(s)=1-e^{-\sigma(s)}$ respectively.
\item[(P3)] : This is due to \cite[Theorem~1.1]{SaTa} for regularity
argument by an application of a geometric flow
which is based on \cite{SoTi17, SzTo11}.
\item[(P4)]  This is due to \cite[Lemma~5.9]{DR17}.
\item[(P5)]  This is due to (P3) and \cite[Theorem~C]{Ma03}.
\item[(P6)]  This follows from the cocycle condition for $D_{V}^{h}$.
\end{description}

Finally, 
we follow the same argument as in the proof of Theorem~\ref{main thm'}
to conclude that $D_{V}^{h}$ is $G$-invariant
provided the coercivity of $D_{V}^{h}$.
\end{proof}

Before proving Theorem~\ref{2nd thm}, we note that
a $\sigma$-soliton $\omega_{\sigma\text{-sol}}$ and
a $\sigma$-extremal metric $\omega_{\sigma\text{-ext}}$ are different
in general.
Indeed, if
$\omega_{\sigma\text{-sol}}=\omega_{\sigma\text{-ext}}=:\omega$,
or equivalently $S(\omega)-n=1-e^{\rho_{\omega}}$,
then an argument in \cite[Section 3.2]{Ni23} shows $\omega$ is
a K\"{a}hler-Einstein metric.

\begin{proof}[Proof of Theorem~\ref{2nd thm}], 
By the inequality \eqref{KgeqD},
if the $\sigma$-Ding functional is coercive
then so is the $\sigma$-Mabuchi functional.
Therefore Theorems \ref{main thm} and \ref{3rd thm} show that
the existence of a $\sigma$-soliton implies the existence of a
$\sigma$-extremal K\"{a}hler metric.
\end{proof}

\begin{example}
Let $M$ be a toric Fano manifold and $\Omega=2\pi{c_{1}(M)}$.
Then, by \cite[Lemma~2.1]{WaZh04}, there exists a unique holomorphic
vector field $V_{0}\in\mathfrak{X}_{0}(M)=\mathfrak{X}(M)$ satisfying
$F_{V_{0}}^{h}(X)=0$ for any $X\in\mathfrak{X}(M)$,
where $h(s)=1-e^{s+C}$ and
$C:=\log\left(
\mathrm{Vol}/\int_{M}e^{\theta_{V_{0}}^{(\omega)}}\omega^{n}\right)$
for some $\omega\in\Omega=2\pi{c_{1}(M)}$.
Here, we note that this constant $C$ does not depend on the choice of
$\omega\in\Omega=2\pi{c_{1}(M)}$.
In view of Wang-Zhu's result \cite[Theorem~1.1]{WaZh04},
$M$ always admits a K\"{a}hler-Ricci soliton, i.e.,
there exists $\omega_{0}\in\Omega=2\pi{c_{1}(M)}$ satisfying
$\mathrm{Ric}(\omega_{0})-\omega_{0}=L_{V_{0}}\omega_{0}$.
Therefore, by Theorem~\ref{2nd thm}, we have a
$\sigma$-extremal metric $\omega_{1}\in\Omega=2\pi{c_{1}(M)}$ for
$\sigma(s)=-s-C$,
i.e., $\omega_{1}$ satisfies
$$
S(\omega_{1})-n=1-
\frac{\mathrm{Vol}}
{\int_{M}e^{\theta_{V_{0}}^{(\omega_{1})}}\omega_{1}^{n}}
e^{\theta_{V_{0}}^{(\omega_{1})}}.
$$
\end{example}



\end{document}